\def\tank#1{\protected@xdef\@thanks{\@thanks
        \protect\footnotetext[0]{#1}}}
\def\bigfoot{

    \@footnotetext}
\newcommand{\ea}{\end{array}}
\newtheorem{theorem}{Theorem}[section]
\newtheorem{proposition}{Proposition}[section]
\newtheorem{claim}{Claim}[section]
\newtheorem{lemma}{Lemma}[section]
\newtheorem{definition}{Definition}[section]
\newtheorem{remark}{Remark}[section]
\newtheorem{Condition}{Condition}[section]
\newenvironment{proof}{Proof.}
\def \eqref#1{\hbox{(\ref{#1})}}
\begin{document}
\title{\Large \bf Large deviations for stochastic generalized porous media equations driven by L\'{e}vy noise \thanks{This work is partially supported by National Key R\&D
Program of China(No. 2022YFA1006001). Weina Wu's research is supported by the National Natural Science Foundation of China (NSFC) (No. 11901285), China Scholarship Council (CSC) (No. 202008320239) and DFG through CRC 1283. Jianliang Zhai's research is supported by NSFC (No. 12131019, 11971456, 11721101),  and the Fundamental Research Funds for the Central Universities (No. WK3470000031).} }

\author{{Weina Wu}$^{a,c}$\footnote{E-mail:wuweinaforever@163.com}~~~ {Jianliang Zhai}$^b$\footnote{E-mail:zhaijl@ustc.edu.cn}
\\
 \small  a. School of Economics, Nanjing University of Finance and Economics,\\
 \small Nanjing, Jiangsu 210023, China.\\
 \small  b. School of Mathematical Sciences,
University of Science and Technology of China,\\
\small Hefei, Anhui 230026, China.\\
\small c. Faculty of Mathematics, University of Bielefeld,\\
\small D-33615 Bielefeld, Germany.}\,
\date{}
\maketitle

\begin{center}
\begin{minipage}{130mm}
{\bf Abstract.} We establish a large deviation principle (LDP) for a class of stochastic porous media equations driven by L\'{e}vy-type noise on a $\sigma$-finite measure space $(E,\mathcal{B}(E),\mu)$, with the Laplacian replaced by a negative definite self-adjoint operator. One of the main contributions of this paper is that we do not assume the compactness of embeddings in the corresponding Gelfand triple, and to compensate for this generalization, a new procedure is provided. This is the first paper to deal with LDPs for stochastic evolution equations with L\'evy noise without compactness conditions. The coefficient $\Psi$ is assumed to satisfy nondecreasing Lipschitz nonlinearity, so an important physical problem covered by this case is the Stefan problem. Numerous examples of negative definite self-adjoint operators are applicable to our results, for example, for open $E\subset\Bbb{R}^d$, $L=$ Laplacian or fractional Laplacians, i.e., $L=-(-\Delta)^\alpha,\ \alpha\in(0,1]$, generalized Schr\"{o}dinger operators, i.e., $L=\Delta+2\frac{\nabla \rho}{\rho}\cdot\nabla$, Laplacians on fractals is also included.

\vspace{3mm} {\bf Keywords.} stochastic porous media equations; L\'{e}vy noise; large deviation principle; weak convergence; sub-Markovian; strongly continuous contraction semigroup.
\end{minipage}
\end{center}

\section{Introduction}
\setcounter{equation}{0}
 \setcounter{definition}{0}

Let $(E,\mathcal{B}(E),\mu)$ be a $\sigma$-finite measurable space. We assume that $(E, \mathcal{B}(E))$ is a standard measurable space, i.e., $\sigma$-isomorphic to a Polish space; see \cite{P67}. Let $L^2(\mu):=L^2(E,\mathcal{B}(E),\mu)$.
The purpose of this paper is to establish a Freidlin Wentzell-type large deviation principle (LDP) to the following stochastic generalized porous media equations (SGPMEs) driven by L\'{e}vy process:
\begin{equation} \label{eq:1}
\left\{ \begin{aligned}
&dX^\epsilon(t)=L\Psi(X^\epsilon(t))dt+\epsilon\int_{Z}f(t,X^\epsilon(t-),z)\widetilde{N}^{\epsilon^{-1}}(dz,dt),\ t\in[0,T],\\
&X^\epsilon(0)= x\in L^2(\mu),
\end{aligned} \right.
\end{equation}
where $L$ is the infinitesimal generator of a symmetric sub-Markovian strongly continuous contraction semigroup $(P_t)_{t\geq0}$ on $L^2(\mu)$.
$\Psi(\cdot):\Bbb{R}\rightarrow\Bbb{R}$ is a monotonically nondecreasing Lipschitz continuous function. $\epsilon>0$ is a small parameter, and $\widetilde{N}^{\epsilon^{-1}}$ is a compensated Poisson random measure on $[0,T]\times Z$ with a $\sigma$-finite intensity measure $\epsilon^{-1}\lambda_T\otimes\nu$, where $\lambda_T$ is the Lebesgue measure on $[0,T]$, and $\nu$ is a $\sigma$-finite measure on $Z$.
For the definition of the compensated Poisson random measure $\widetilde{N}^{\epsilon^{-1}}$, see Section \ref{Section2} below. For the precise conditions on $\Psi$ and $f$, see Section \ref{Section3} below.

The classical porous media equation:
\begin{eqnarray}\label{classical}
dX(t)=\Delta X^m(t)dt
\end{eqnarray}
on a domain in $\Bbb{R}^d$, for $m>1$, models the flow of ionized gases at high temperature, nonlinear heat transfer, and filtration of incompressible fluids through a porous stratum; see, e.g., \cite{A, ZR} and references therein. Since the foundational work in \cite{DR, DR04}, there have been many publications on the study of stochastic porous media equations (SPMEs), including the well-posedness of solutions and their long-time behaviors; see, e.g., \cite{BDPR04, BGLR, BRR, DR, DR04, G13, G14} and references therein.

SGPMEs extend the study of SPMEs with a nonlinear term $\Delta X^m(t)$ on a domain in $\Bbb{R}^d$ to that with a nonlinear term $L\Psi(X(t))$ on general measure spaces $(E,\mathcal{B}(E),\mu)$.
Typically, the methods and techniques available for investigating SPMEs are unsuitable for SGPMEs, so new and sophisticated tools are needed. For the cases driven by Wiener processes, SGPMEs have been investigated by many people, and papers have been published on the existence and uniqueness of solutions (\cite{DRRW06, RRW, RW, RWX}), LDPs (\cite{RWW, WZ}), invariant measures (\cite{RW07}), the Harnack inequality (\cite{W}), and many other SGPME aspects.
In contrast, only a few studies address SGPMEs driven by L\'{e}vy-type or Poisson-type perturbations, let alone the deriving properties of their solutions. Assuming $(E,\mathcal{B}(E),\mu)$ is a separable probability space, Hou and Zhou \cite{ZH} considered the existence and uniqueness of solutions to SGPMEs driven by L\'{e}vy noise. Later, the ergodicity and exponential stability of the same equation were obtained in \cite{ZH1} and \cite{GZ}, respectively.
In a recent paper \cite{WZ21}, the current paper's authors proved the existence and uniqueness of solutions to \eqref{eq:1} when $(E,\mathcal{B}(E),\mu)$ is a $\sigma$-finite and standard measurable space. In order to have a better understanding of the asymptotic behavior of the solution to \eqref{eq:1}, the present paper investigates an LDP for \eqref{eq:1}.  No LDP results have previously been published for SGPMEs driven by L\'{e}vy-type noise.

Our assumptions of $(E,\mathcal{B}(E),\mu)$ and $L$ are the same as in \cite{RWX, WZ, WZ21}, and $(L,D(L))$ is a Dirichlet operator on $L^2(\mu)$ (\cite{MR}). These assumptions mean that once one proves that  $L$ is a generator on an $L^2$-space of a Dirichlet form, both the results of the current paper and \cite{RWX, WZ, WZ21} apply to the $L$.
For example, let $E:=U\subset \Bbb{R}^d$, $U$ be open, and $\mu$ a positive Radon measure on $U$ such that $supp[\mu]=U$.
Using the Dirichlet form theory, one can define its associated Dirichlet operator $L$ on $L^2(U,\mu)$.
If $L$ is the Friedrichs extension of the operator $L_0=\Delta+2\frac{\nabla \rho}{\rho}\cdot\nabla$ on $L^2(\Bbb{R}^d, \rho^2dx)$, where $dx$ denotes the Lebesgue measure and $\rho\in H^1(\Bbb{R}^d)$, one can prove that there exists a Dirichlet form on $L^2(\Bbb{R}^d, \rho^2dx)$ and $L$ is the corresponding Dirichlet operator.
For explicit verifications of the above claims about $E$ and $L$, we refer the reader to \cite[Section 4]{RWX}.
Other interesting results have been proven,
such as, if $E=\Bbb{R}^d$ and $L=-(-\Delta)^\alpha, \alpha\in(0,1]$, no restriction on $d$ is needed;
if $E$ is a fractal, we can take $L$ to be the Laplace operator on this fractal;
and the results also apply to
generalized $\rm Schr\ddot{o}dinger$ operators (i.e., $L=\Delta+2\frac{\nabla \rho}{\rho}\cdot\nabla$).
We refer to \cite[Chapter II]{MR} for many other examples of related results.

The coefficient $\Psi$ is assumed to satisfy the property of nondecreasing Lipschitz nonlinearity (see \textbf{(H1)} in Section \ref{Section3}), and an important physical problem covered by this case is the Stefan problem (\cite[Section 1.1.1]{BDR}). To be more precise, if $L=\Delta$ on a bounded open subset of $\Bbb{R}^d$ with Dirichlet boundary conditions, and we take $\Psi$ to be
\begin{eqnarray*}
\Psi(r)=\left\{
          \begin{array}{ll}
            ar, &\text{for}~r<0, \\
            0, & \text{for}~0\leq r\leq\rho, \\
            b(r-\rho), & \text{for}~r>\rho,
          \end{array}
        \right.
\end{eqnarray*}
where $a,b,\rho>0$, then $\Psi$ fulfills \textbf{(H1)} with Lipschitz constant $\max\{a,b\}$. In this case, \eqref{eq:1} reduces to the classical two-phase Stefan problem, describing the situation where the melting (or solidification) of phase changing materials in the presence of L\'evy noise.
Here, $\Psi$ is the inverse of the enthalpy function associated with the phase transition, and $X^\epsilon$ is related to the temperature $\vartheta$ by the transformation $\vartheta=\Psi(X^\epsilon)$.

The weak convergence approach introduced by \cite{BDM2,BCD} has been applied to study the LDPs in various dynamical systems driven by L\'evy processes; see, e.g., \cite{BPZ,DXZZ 2017, LSZZ, RZ, XZ,YZZ 2015,ZZ 2015}.
Recently, a sufficient condition to verify this large deviation criteria has been improved in the paper \cite{LSZZ} by the second author and his collaborators.
The first sufficient condition was introduced by Matoussi, Sabbagh, and Zhang in \cite{MSZ} for the Wiener case.
The improved sufficient condition seems to be more effective and suitable to deal with SPDEs with highly nonlinear terms; see, e.g., \cite{DWZZ, MSZ, WZ} for the Wiener case.
In this paper, we adopt this improved sufficient condition. The main point of our procedures is to prove the convergence of the so-called skeleton equations; see Subsection \ref{Subsection5.1}. Before this, we need to obtain results on existence and uniqueness and also provide some a priori estimates for solutions to the skeleton equations; see Section \ref{Section4}.

One of the main contributions of this paper is that we do not assume the compactness of embeddings in the corresponding Gelfand triple.
Avoiding this assumption allows us to cover the important class of
models on general $\sigma$-finite measurable spaces $(E,\mathcal{B}(E),\mu)$, and in particular, the models on unbounded domains in $\Bbb{R}^d$.
In sharp contrast to our work, previous results on LDPs for stochastic evolution equations (SEEs) with L\'evy noise heavily rely on the compactness condition for the Gelfand triples because it is crucial in their methods to prove the compactness of solutions to the so-called stochastic control equations and further prove convergence to the solutions of the corresponding skeleton equations.
Hence, in our present work, a different approach must be employed.
The main difficulty lies in the convergence of the skeleton equations, and to overcome this difficulty, we adopt a series of technical methods, including time discretization, a cut-off argument, and relative entropy estimates of a sequence of probability measures; see Subsection \ref{Subsection5.1}. {These successfully refrain the assumption that the coefficient of the noise is H\"{o}lder continuous w.r.t. time in the Wiener case (cf. \cite[page:10006, (H3)]{WZ}), which can be seen as a ``compensation" for dropping the compactness condition of the Gelfand triples.
We want to emphasize the fact that the time discretization approach, which was inspired by the work \cite{CM} and has been widely used in studying LDPs for SEEs driven by Gaussian noise, is also applicable here.
The current paper is the first to deal with LDPs for SEEs with L\'evy noise without the previous compactness conditions.


Finally, we would like to refer the reader to \cite{BLZ, LR, P, PR, PZ} for more background information and results on SPDEs, to \cite{A, BDR} for background on SPMEs, and to \cite{RRW, RW, RWW, RWX, RWX1, RWZ} and the references therein for comprehensive theories of SGPMEs.

The structure of this paper is as follows: In Section \ref{Section2}, we review basic notation about Poisson random measures and introduce the Gelfand triple used throughout this work. Section \ref{Section3} states the precise hypotheses and the main result: the large deviations for \eqref{eq:1}. Section \ref{Section4} is devoted to proving the existence and uniqueness of solutions to the skeleton equations. In Section \ref{Section5}, we prove the main result.

\section{Preliminaries}\label{Section2}
\setcounter{equation}{0}
 \setcounter{definition}{0}

 In this section, we introduce notation, the definition of the compensated Poisson random measure $\widetilde{N}^{\epsilon^{-1}}$ in (\ref{eq:1}), and the Gelfand triple used in this paper.

\subsection{Notation}

For a locally compact Polish space $\mathcal{K}$, let $\mathcal{M}_{FC}(\mathcal{K})$ denote the space of all nonnegative measures $\gamma$ on $({\mathcal{K}},\mathcal{B}({\mathcal{K}}))$ such that $\gamma(K)<\infty$ for every compact $K$ in ${\mathcal{K}}$.
Let $C_c({\mathcal{K}})$ denote the space of continuous functions with compact support, and endow $\mathcal{M}_{FC}({\mathcal{K}})$ with the weakest topology such that for every $\hbar\in C_c({\mathcal{K}})$, the function $\gamma\ni\mathcal{M}_{FC}({\mathcal{K}})\mapsto\langle \hbar,\gamma\rangle:=\int_{\mathcal{K}}\hbar(u)\gamma (du)$ is continuous. This topology can be metrized such that $\mathcal{M}_{FC}({\mathcal{K}})$ is a Polish space; see Section 2 of \cite{BDM2} for more details. Throughout the paper, we use this topology on $\mathcal{M}_{FC}(\mathcal{K})$.

In this paper, we fix $T\in(0,\infty)$. Denote by $\lambda_T$ the Lebesgue measure on $[0,T]$, and $\lambda_\infty$ the Lebesgue measure on $[0,\infty)$.

For a metric space $\Bbb{H}$, the Borel $\sigma$-field on $\Bbb{H}$ is denoted by $\mathcal{B}(\Bbb{H})$.
Let $L^2([0,T]\times\Omega;\Bbb{H})$ denote
the space of all $\Bbb{H}$-valued square-integrable functions on
$[0,T]\times\Omega$,
$L^\infty([0,T],\Bbb{H})$ denote the space of all $\Bbb{H}$-valued uniformly bounded measurable functions on $[0,T]$, $C([0,T];
\Bbb{H})$ denote the space of all $\Bbb{H}$-valued continuous functions on $[0,T]$ equipped with the topology of uniform convergence, and $D([0,T];\Bbb{H})$ denote the space of all $\Bbb{H}$-valued c\`{a}dl\`{a}g functions on $[0,T]$ equipped
with the usual Skorohod topology. For simplicity, the positive constants $c$, $C$, $C_k$, $k=1,2,\ldots$, used in this paper may change from line to line.

\subsection{Poisson random measure}

Let $Z$ be a locally compact Polish space, and $\nu$ be a given $\sigma$-finite positive measure on $(Z,\mathcal{B}(Z))$ with $\nu\in\mathcal{M}_{FC}(Z)$.
Let $Z_T=[0,T]\times Z$, $Y=Z\times [0,\infty)$, and $Y_T=[0,T]\times Y$.

For simplicity, from now on, we write ${\Bbb{M}}:=\mathcal{M}_{FC}(Y_T)$.  Denote by ${\Bbb{P}}$ the unique probability measure on $( {\Bbb{M}},\mathcal{B}( {\Bbb{M})})$, under which the canonical map, $\bar{N}: {\Bbb{M}}\mapsto {\Bbb{M}}$, $ \bar{N}( {m}):=  {m}$, is a Poisson random measure with intensity measure $\bar{\nu}_T=\lambda_T\otimes\nu\otimes\lambda_\infty$.
The corresponding compensated Poisson random measure is denoted by $\widetilde{\bar{N}}$.
Let $\bar{\mathcal{F}_t}:=\sigma\{\bar{N}((0,s]\times A):0\leq s\leq t, A\in\mathcal{B}(Y)\}$, and let $ {\mathcal{F}}_t$ denote the completion under $ {\Bbb{P}}$.
 Denote by $ {\mathcal{P}}$ the predictable $\sigma$-field on $[0,T]\times {\Bbb{M}}$ with the filtration $\{ {\mathcal{F}}_t:0\leq t\leq T\}$ on $( {\Bbb{M}},\mathcal{B}( {\Bbb{M}}))$.
Let $ {\mathcal{A}}$ be the class of all $( {\mathcal{P}}\otimes\mathcal{B}(Z))/\mathcal[0,\infty)$-measurable maps $\varphi:Z_T\times {\Bbb{M}}\rightarrow[0,\infty)$.
For $\varphi\in {\mathcal{A}}$, define a counting process $N^\varphi$ on $Z_T$ by
\begin{eqnarray}
N^\varphi\big((0,t]\times U\big)=\int_{(0,t]\times U\times(0,\infty)}1_{[0,\varphi(s,z)]}(r)\bar{N}(dr,dz,ds), \  \text{for}\ t\in[0,T]\ \text{and}\ U\in\mathcal{B}(Z).
\end{eqnarray}
This $N^\varphi$ is called a controlled random measure, with $\varphi$ selecting the intensity for the points at location $z$ and time $s$, in a possibly random but non-anticipating way.
Analogously, we define a process
\begin{eqnarray}
\widetilde{N}^\varphi\big((0,t]\times U\big)=\int_{(0,t]\times U\times(0,\infty)}1_{[0,\varphi(s,z)]}(r)\widetilde{\bar{N}}(dr,dz,ds).
\end{eqnarray}
When $\varphi(s,z)\equiv\epsilon^{-1}\in(0,\infty)$, we write $N^\varphi=N^{\epsilon^{-1}}$ and
$\widetilde{N}^\varphi=\widetilde{N}^{\epsilon^{-1}}$. Set $\nu_T:=\lambda_T\otimes\nu$. Note that, with respect to $ {\Bbb{P}}$, $N^{\epsilon^{-1}}$ is a Poisson random measure on $Z_T$ with intensity measure $\epsilon^{-1}\nu_T$, and $\widetilde{N}^{\epsilon^{-1}}$ is the compensated Poisson random measure.

Set $(\Omega,\mathcal{F})=(\Bbb{M},\mathcal{B}(\Bbb{M}))$. In the present paper, we study \eqref{eq:1} on the given probability space $(\Omega,\mathcal{F}, \{\mathcal{F}_t,t\in[0,T]\},\Bbb{P})$. Denote the expectation with respect to $\Bbb{P}$ by $\Bbb{E}$.

\subsection{Gelfand triple}
Let $(E,\mathcal{B}(E),\mu)$ be a $\sigma$-finite measurable space. Let $(P_t)_{t>0}$ be a strongly continuous, symmetric sub-Markovian
contraction semigroup on $L^2(\mu)$ with generator $(L, D(L))$. The $\Gamma$-transform of $(P_t)_{t>0}$ is defined by the following Bochner integral
\begin{eqnarray*}
V_ru:=\frac{1}{\Gamma(\frac{r}{2})}\int_0^\infty t^{\frac{r}{2}-1}e^{-t}P_tudt,~~u\in L^2(\mu),~~r>0.
\end{eqnarray*}
In this paper, we consider the Hilbert space $(F_{1,2},\|\cdot\|_{F_{1,2}})$ defined by
$$
F_{1,2}:=V_1(L^2(\mu)),~\text{with~norm}~\|f\|_{F_{1,2}}=|u|_2,~~\text{for}~~f=V_1u,~~ u\in L^2(\mu),
$$
where the norm $|\cdot|_2$ is defined as $|u|_2=(\int_E |u|^2d\mu)^{\frac{1}{2}}$, and its inner product is denoted by $\langle \cdot, \cdot\rangle_2$.
From \cite{Fukushima}, we know
$$
V_1=(1-L)^{-\frac{1}{2}},~~\text{so~that}~~F_{1,2}=D\big((1-L)^{\frac{1}{2}}\big)~~\text{and}~~\|f\|_{F_{1,2}}=|(1-L)^{\frac{1}{2}}f|_2.
$$
The dual space of $F_{1,2}$ is denoted by $F^*_{1,2}$. Denote the duality between $F^*_{1,2}$ and $F_{1,2}$ by
$_{F^*_{1,2}}\langle \cdot,\cdot\rangle_{F_{1,2}}$. $F^*_{1,2}$ is equipped with the following norm
\begin{eqnarray*}
 \|\eta\|_{F^*_{1,2}}:=\sup_{\substack{v\in
F_{1,2}\\ \|v\|_{F_{1,2}}\leq1}}\eta(v),~\eta\in F_{1,2}^*.
\end{eqnarray*}

\vspace{2mm}
The Dirichlet form $(\mathscr{E}, D(\mathscr{E}))$ of $(P_t)_{t>0}$ on
$L^2(\mu)$ is given by
\begin{eqnarray*}
   && D(\mathscr{E}):=D(\sqrt{-L}), \\
   && \mathscr{E}(u,v):=\langle\sqrt{-L}u, \sqrt{-L}v\rangle_2,
\end{eqnarray*}
and accordingly, we have the identification
$$F_{1,2}=D(\mathscr{E}),~~~\|u\|^2_{F_{1,2}}=\mathscr{E}_1(u,u),$$
where
$\mathscr{E}_\varepsilon:=\mathscr{E}+\varepsilon \langle\cdot, \cdot \rangle_2,~ \varepsilon\in(0,\infty)$, i.e.,
\begin{eqnarray*}
  \mathscr{E}_\varepsilon(v,v)=\|v\|^2_{F_{1,2,\varepsilon}}:=\mathscr{E}(v,v)+\varepsilon|v|^2_2,\ \text{for}\ v\in F_{1,2},
\end{eqnarray*}
and
\begin{eqnarray}\label{equivalent1}
 \|\eta\|_{F^*_{1,2,\varepsilon}}:= _{F^*_{1,2}}\langle \eta, (\varepsilon-L)^{-1}\eta\rangle^{\frac{1}{2}}_{F_{1,2}}:=\sup_{\substack{v\in
F_{1,2}\\ \|v\|_{F_{1,2,\varepsilon}}\leq1}}\eta(v),~\eta\in F_{1,2}^*,
\end{eqnarray}
which is equivalent to $\|\cdot\|_{F^*_{1,2}}$.


Let $H$ be a separable Hilbert space with inner product $\langle\cdot,\cdot\rangle_H$, and $H^*$ be its dual.
Let $V$ be a reflexive Banach space such that $V\subset H$ continuously and densely.
Then, for its dual space $V^*$, it follows that $H^*\subset V^*$ continuously and densely.
Identifying $H$ and $H^*$ via the Riesz isomorphism, we have that
$$V\subset H\subset V^*$$ continuously and densely.
If $_{V^*}\langle\cdot,\cdot\rangle_V$ denotes the dualization between $V^*$ and $V$, it follows that
\begin{eqnarray}\label{triple1}
_{V^*}\langle z,v\rangle_V=\langle z,v\rangle_H,~~\text{for~all}~z\in H,~~v\in V.
\end{eqnarray}
$(V,H,V^*)$ is called a Gelfand triple.

The first author of the current paper and her collaborators constructed a Gelfand triple with $V=L^2(\mu)$ and $H=F^*_{1,2}$ in \cite{RWX}. The Riesz map which identifies $F_{1,2}$ and $F^*_{1,2}$ is $(1-L)^{-1}: F^*_{1,2}\rightarrow F_{1,2}$.

We need the following lemma, which was proved in \cite[Lemma 2.2]{RWX}.
\begin{lemma}
The map
$$1-L:F_{1,2}\rightarrow F_{1,2}^*$$
extends to a linear isometry
$$1-L:L^2(\mu)\rightarrow(L^2(\mu))^*,$$
and for all $u,v\in L^2(\mu)$,
\begin{eqnarray}\label{triple2}
_{(L^2(\mu))^*}\langle(1-L)u, v\rangle_{L^2(\mu)}=\langle u, v \rangle_2.
\end{eqnarray}
\end{lemma}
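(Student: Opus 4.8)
## Proof proposal

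The statement to prove is the lemma from \cite[Lemma 2.2]{RWX}: the operator $1-L$, initially defined from $F_{1,2}$ to $F_{1,2}^*$ as the Riesz isomorphism, extends to a linear isometry $1-L:L^2(\mu)\to(L^2(\mu))^*$ satisfying $_{(L^2(\mu))^*}\langle(1-L)u,v\rangle_{L^2(\mu)}=\langle u,v\rangle_2$ for all $u,v\in L^2(\mu)$.

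\begin{proof}
The plan is to go through the chain of Hilbert spaces $F_{1,2}\subset L^2(\mu)\subset F_{1,2}^*$ and exploit that $F_{1,2}=V_1(L^2(\mu))=D((1-L)^{1/2})$ with $V_1=(1-L)^{-1/2}$, so $1-L=V_1^{-2}$. First I would recall that by definition $\|f\|_{F_{1,2}}=|(1-L)^{1/2}f|_2$, hence for $f=V_1u$ with $u\in L^2(\mu)$ we have $\|f\|_{F_{1,2}}=|u|_2$; thus $V_1:L^2(\mu)\to F_{1,2}$ is a surjective isometry. Next, since the Riesz map identifying $F_{1,2}$ with $F_{1,2}^*$ is $(1-L)^{-1}$ (equivalently $V_1^2$) mapping $F_{1,2}^*\to F_{1,2}$, the duality pairing on $F_{1,2}^*\times F_{1,2}$ is $_{F_{1,2}^*}\langle\eta,v\rangle_{F_{1,2}}=\langle (1-L)^{-1}\eta, v\rangle_{F_{1,2}} = \langle (1-L)^{-1/2}\eta,(1-L)^{1/2}v\rangle_2$ for $\eta\in F_{1,2}^*$, $v\in F_{1,2}$. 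In particular, for $u\in F_{1,2}$, the element $(1-L)u\in F_{1,2}^*$ satisfies $_{F_{1,2}^*}\langle(1-L)u,v\rangle_{F_{1,2}}=\langle(1-L)^{1/2}u,(1-L)^{1/2}v\rangle_2=\mathscr E_1(u,v)$, and its $F_{1,2}^*$-norm equals $\|u\|_{F_{1,2}}$; this is the isometry property already on the dense subspace $F_{1,2}$.

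Then I would upgrade this to $L^2(\mu)$ by a density/continuous-extension argument. The key observation is that the map $u\mapsto (1-L)u$, viewed as $F_{1,2}\to F_{1,2}^*$, when restricted to $L^2(\mu)\subset F_{1,2}^*$ (note $L^2(\mu)$ sits between $F_{1,2}$ and $F_{1,2}^*$: $F_{1,2}\subset L^2(\mu)\subset F_{1,2}^*$ with dense continuous inclusions, since $(1-L)^{1/2}$ has a bounded inverse $V_1$ on $L^2(\mu)$), is an $L^2(\mu)$-to-$(L^2(\mu))^*$ map after the identification. Concretely: given $u\in L^2(\mu)$, the linear functional $v\mapsto\langle u,v\rangle_2$ on $L^2(\mu)$ restricts to a bounded functional on $F_{1,2}$ (because $|\langle u,v\rangle_2|\le|u|_2|v|_2\le|u|_2\|v\|_{F_{1,2}}$, using $|v|_2\le\|v\|_{F_{1,2}}$), hence defines an element of $F_{1,2}^*$ which I would denote $(1-L)u$; on $F_{1,2}$ this agrees with the original Riesz-map definition by \eqref{triple2}-type computation above. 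The assignment $u\mapsto(1-L)u$ from $L^2(\mu)$ into $(L^2(\mu))^*$ is then clearly linear, and its operator norm as a map $L^2(\mu)\to(L^2(\mu))^*$ is exactly $\sup_{|v|_2\le1}|\langle u,v\rangle_2|=|u|_2$, giving the isometry. Finally \eqref{triple2} is just the defining identity $_{(L^2(\mu))^*}\langle(1-L)u,v\rangle_{L^2(\mu)}=\langle u,v\rangle_2$, which holds by construction.

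The main obstacle, and the point that needs care, is bookkeeping of the three nested identifications: $L^2(\mu)$ is identified with its own dual via its Riesz map, $F_{1,2}$ is identified with $F_{1,2}^*$ via $(1-L)^{-1}$, and these two identifications are \emph{not} compatible on the overlap — that is precisely why $1-L$ (rather than the identity) appears in \eqref{triple2}. I would make sure to state explicitly which Riesz identification is in force at each step, verify that the element called $(1-L)u$ built from the $L^2$-pairing genuinely coincides with the image of $u$ under the original $F_{1,2}\to F_{1,2}^*$ Riesz map when $u\in F_{1,2}$ (so there is no ambiguity of notation), and check that $\|(1-L)u\|_{(L^2(\mu))^*}=|u|_2$ by testing against $v=u/|u|_2$ to get the lower bound and Cauchy–Schwarz for the upper bound. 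The density of $F_{1,2}$ in $L^2(\mu)$ (equivalently, that $V_1$ has dense range, which follows from strong continuity of $(P_t)$) ensures the extension is unique.
\end{proof}
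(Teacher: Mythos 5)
The paper itself gives no proof of this lemma (it is quoted from \cite[Lemma 2.2]{RWX}), so your attempt has to be judged on its own terms. Your overall plan is the right one: define the extension by sending $u\in L^2(\mu)$ to the functional $v\mapsto\langle u,v\rangle_2$ on $L^2(\mu)$, read off the identity ${}_{(L^2(\mu))^*}\langle(1-L)u,v\rangle_{L^2(\mu)}=\langle u,v\rangle_2$ by construction, get the isometry from Cauchy--Schwarz plus testing $v=u/|u|_2$, and invoke density of $F_{1,2}$ in $L^2(\mu)$ for uniqueness. The gap is in the one step you yourself single out as the main obstacle: the verification that this map really \emph{extends} $1-L:F_{1,2}\rightarrow F^*_{1,2}$. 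You claim that, for $u\in F_{1,2}$, the functional $v\mapsto\langle u,v\rangle_2$ restricted to $F_{1,2}$ ``agrees with the original Riesz-map definition'', citing your computation ${}_{F^*_{1,2}}\langle(1-L)u,v\rangle_{F_{1,2}}=\mathscr{E}_1(u,v)$. As stated this is false: as elements of $F^*_{1,2}$ the two functionals differ precisely by the operator $1-L$ (for $u\in D(L)$ the Riesz image of $u$ is $v\mapsto\langle(1-L)u,v\rangle_2$, not $v\mapsto\langle u,v\rangle_2$), and the computation you point to produces $\mathscr{E}_1(u,v)$, not $\langle u,v\rangle_2$, when pairing against $v\in F_{1,2}$. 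You are implicitly working in the triple $F_{1,2}\subset L^2(\mu)\subset F^*_{1,2}$, which is exactly the identification that is incompatible with the one the lemma uses — the incompatibility you correctly flag but do not resolve.

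What actually needs to be checked is agreement after embedding $F^*_{1,2}$ into $(L^2(\mu))^*$ in the way the paper's Gelfand triple $L^2(\mu)\subset F^*_{1,2}\subset(L^2(\mu))^*$ prescribes: an element $h\in F^*_{1,2}$ acts on $v\in L^2(\mu)$ through the $F^*_{1,2}$-inner product, $v\mapsto\langle h,v\rangle_{F^*_{1,2}}$. So the required identity is $\langle(1-L)u,v\rangle_{F^*_{1,2}}=\langle u,v\rangle_2$ for $u\in F_{1,2}$, $v\in L^2(\mu)$, which follows from $\langle\eta,\zeta\rangle_{F^*_{1,2}}={}_{F^*_{1,2}}\langle\eta,(1-L)^{-1}\zeta\rangle_{F_{1,2}}$ together with $\mathscr{E}_1\big(u,(1-L)^{-1}v\big)=\big\langle(1-L)^{1/2}u,(1-L)^{-1/2}v\big\rangle_2=\langle u,v\rangle_2$, using self-adjointness of $(1-L)^{1/2}$. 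This computation appears nowhere in your write-up. Relatedly, your remark that $\|(1-L)u\|_{F^*_{1,2}}=\|u\|_{F_{1,2}}$ ``is the isometry property already on the dense subspace'' conflates two different isometries; the one asserted in the lemma is $\|(1-L)u\|_{(L^2(\mu))^*}=|u|_2$. With the displayed consistency computation inserted, your argument closes; without it, the extension claim is unproven and, read literally, incorrect.
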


\section{Hypotheses and main result}\label{Section3}
\setcounter{equation}{0}
 \setcounter{definition}{0}

In this section, we will present the hypotheses and the main result in this paper. Before giving the hypothesis, we need to introduce more basic notation.
For $\varpi\in(0,\infty)$,
define
\begin{eqnarray*}
\mathcal{H}^\varpi=&&\!\!\!\!\!\!\!\!\big\{h:[0,T]\times Z\rightarrow\Bbb{R}^+: \forall~ \Gamma\in\mathcal{B}([0,T])\otimes\mathcal{B}(Z)\nonumber\\
&&\ \ \ \ \ \ \ \ \ \ \text{with}~\nu_T(\Gamma)<\infty,~\text{we~have}~\int_\Gamma \exp(\varpi h(s,z))\nu(dz)ds<\infty\big\},
\end{eqnarray*}
and denote by $\mathcal{H}^\infty=\bigcap_{\varpi\in(0,\infty)}\mathcal{H}^\varpi$. Set
\begin{eqnarray*}
\mathcal{H}=&&\!\!\!\!\!\!\!\!\big\{h:[0,T]\times Z\rightarrow\Bbb{R}^+:\exists~ \delta>0,~ s.t.~ \forall~ \Gamma\in\mathcal{B}([0,T])\otimes\mathcal{B}(Z)\nonumber\\
&&\ \ \ \ \ \ \ \ \ \text{with}~\nu_T(\Gamma)<\infty,~\text{we~have}~\int_\Gamma \exp(\delta h^2(s,z))\nu(dz)ds<\infty\big\},
\end{eqnarray*}
then by \cite[Remark 3.2]{BCD}, we have
\begin{eqnarray}\label{eq H infinit H}
\mathcal{H}\subset\mathcal{H}^\infty.
\end{eqnarray}

Denote
\begin{eqnarray*}
L_2(\nu_T)=\big\{h:[0,T]\times Z\rightarrow\Bbb{R}^+:\int_0^{T}\int_Zh^2(s,z)\nu(dz)ds<\infty\big\}.
\end{eqnarray*}

In this paper, we study \eqref{eq:1} with the following hypotheses.

\vspace{2mm}
\noindent \textbf{(H1)} $\Psi(\cdot):\Bbb{R}\rightarrow \Bbb{R}$ is a monotonically nondecreasing Lipschitz function with $\Psi(0)=0$.

\vspace{2mm}
\noindent \textbf{(H2)} There exist $l_1\in\mathcal{H}^\infty\cap L_2(\nu_T)$, $l_2\in\mathcal{H}\cap L_2(\nu_T)$ and $l_3\in\mathcal{H}^{\varpi_0}\cap L_2(\nu_T)$ for some $\varpi_0>0$, such that
\begin{eqnarray*}
&&\textbf{(i)}~~\|f(t,x,z)-f(t,y,z)\|_{F^*_{1,2}}\leq l_1(t,z)\|x-y\|_{F^*_{1,2}},~\forall~(t,z)\in [0,T]\times Z,~x,y\in F^*_{1,2}.\\
&&\textbf{(ii)}~~\|f(t,x,z)\|_{F^*_{1,2}}\leq l_2(t,z)\big(\|x\|_{F^*_{1,2}}+1\big),~\forall~(t,x,z)\in [0,T]\times F^*_{1,2}\times Z.\\
&&\textbf{(iii)}~~|f(t,x,z)|_2\leq l_3(t,z)\big(|x|_2+1\big),~\forall~(t,x,z)\in [0,T]\times L^2(\mu)\times Z.\\
\end{eqnarray*}

\begin{remark}\label{equivalent}
From \cite[page:6, (2.6)]{RWX1} and \textbf{(H2)(i)}, we can easily see that for all $(t,z)\in [0,T]\times Z$, $x,y\in F^*_{1,2}$,
\begin{eqnarray}\label{f}
\|f(t,x,z)-f(t,y,z)\|_{F^*_{1,2,\varepsilon}}\leq \frac{l_1(t,z)}{\sqrt{\varepsilon}}\|x-y\|_{F^*_{1,2,\varepsilon}},~ \text{for~all}~0<\varepsilon<1.
\end{eqnarray}
\end{remark}

\begin{remark}\label{weak sense of H2}
\textbf{(H2)} implies the following strictly weak hypothesis:\\
\noindent \textbf{(H2)'} There exist functions $C_1, C_2, C_3\in L^1([0,T];\mathbb{R}^+)$ such that for all $t\in[0,T]$,
\begin{eqnarray*}
&&\textbf{(i)}~~\int_Z\|f(t,x,z)-f(t,y,z)\|^2_{F^*_{1,2}}\nu(dz)\leq C_1(t)\|x-y\|^2_{F^*_{1,2}},~\forall~x,y\in F^*_{1,2}.\\
&&\textbf{(ii)}~~\int_Z\|f(t,x,z)\|^2_{F^*_{1,2}}\nu(dz)\leq C_2(t)\big(\|x\|^2_{F^*_{1,2}}+1\big),~\forall~x\in F^*_{1,2}.\\
&&\textbf{(iii)}~~\int_Z|f(t,x,z)|^2_2\nu(dz)\leq C_3(t)\big(|x|^2_2+1\big),~\forall~x\in L^2(\mu).\\
\end{eqnarray*}
\end{remark}

From \cite[Theorem 3.1]{WZ21}, we can obtain the following theorem.

\begin{theorem}\label{th}
Suppose that \textbf{(H1)} and \textbf{(H2)'} hold. Then, for each $x\in L^2(\mu)$, there exists a unique solution $X^\epsilon$ to \eqref{eq:1}, i.e., $X^\epsilon=(X^\epsilon(t),t\in[0,T])$ is an $F^*_{1,2}$-valued c\`{a}dl\`{a}g $\mathcal{F}_t$-adapted process, and the following conditions are satisfied:
\begin{eqnarray}\label{eqn1}
{X^\epsilon}\in L^2([0,T]\times \Omega; L^2(\mu))\cap L^2(\Omega;L^\infty([0,T];F^*_{1,2}));
\end{eqnarray}
\begin{eqnarray}\label{eqn2}
\int_0^\cdot \Psi({X^\epsilon}(s))ds\in C([0,T];F_{1,2}),\ \Bbb{P}\text{-a.s}.;
\end{eqnarray}
and for all $t\in[0,T]$,
\begin{eqnarray}\label{eqn3}
X^\epsilon(t)=x+L\int_0^t\Psi({X^\epsilon}(s))ds+\epsilon\int_0^t\int_{Z}f(s,{X^\epsilon}(s-),z)\widetilde{N}^{\epsilon^{-1}}(dz,ds),
\end{eqnarray}
holds in $F^*_{1,2}$, $\Bbb{P}$-a.s..
\end{theorem}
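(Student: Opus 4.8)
The plan is to obtain Theorem \ref{th} as a corollary of the general well-posedness result \cite[Theorem 3.1]{WZ21} by verifying that equation \eref{eq:1} fits the abstract framework of that paper. The first step is to recast \eref{eq:1} as an abstract stochastic evolution equation on the Gelfand triple $(L^2(\mu), F^*_{1,2}, (L^2(\mu))^*)$ constructed in \cite{RWX}, using the isometric extension of $1-L$ from Lemma 2.1. Concretely, one writes the drift operator $A(u) := L\Psi(u) = -(1-L)\Psi(u) + \Psi(u)$, interpreted via the duality \eref{triple2}, and checks that $A$ maps $L^2(\mu)$ into $(L^2(\mu))^*$; the noise coefficient is $f$ acting on $F^*_{1,2}$. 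Then I would verify, one by one, the structural hypotheses of \cite[Theorem 3.1]{WZ21}: hemicontinuity and coercivity of $A$, which follow from monotonicity and the Lipschitz property of $\Psi$ in \textbf{(H1)} together with $\Psi(0)=0$ (this is exactly the porous-media-type monotonicity $_{(L^2(\mu))^*}\langle L\Psi(u)-L\Psi(v),u-v\rangle \le 0$ one gets from the sub-Markovian structure); the growth bound; and the Lipschitz/growth conditions on $f$, which are precisely \textbf{(H2)(i)--(iii)} — note that \textbf{(H2)(i)} gives the $F^*_{1,2,\varepsilon}$-Lipschitz bound via Remark \ref{equivalent}, while \textbf{(H2)(iii)} controls the $L^2(\mu)$-norm growth needed for the a priori bound \eref{eqn1}.

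Once the abstract hypotheses are matched, existence and uniqueness of an $F^*_{1,2}$-valued càdlàg $\mathcal{F}_t$-adapted solution, together with the regularity statements \eref{eqn1} and \eref{eqn3}, follow directly from \cite[Theorem 3.1]{WZ21}. The statement \eref{eqn2}, namely that $t\mapsto \int_0^t \Psi(X^\epsilon(s))\,ds$ lies in $C([0,T];F_{1,2})$, $\Bbb{P}$-a.s., would be derived separately: from \eref{eqn1} we have $X^\epsilon \in L^2([0,T]\times\Omega;L^2(\mu))$, hence by \textbf{(H1)} also $\Psi(X^\epsilon) \in L^2([0,T]\times\Omega;L^2(\mu))$ (using $|\Psi(r)| \le C|r|$), and then $s\mapsto \Psi(X^\epsilon(s))$ is $\Bbb{P}$-a.s. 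Bochner integrable as an $L^2(\mu) \subset F_{1,2}$-valued function on $[0,T]$; the indefinite integral of an $L^1$-in-time $F_{1,2}$-valued function is continuous in $F_{1,2}$, which gives \eref{eqn2}. One should check the measurability/adaptedness of this map is inherited from that of $X^\epsilon$.

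The only genuine subtlety — and the step I would expect to require the most care — is confirming that the hypotheses \textbf{(H1)}, \textbf{(H2)} of the present paper are at least as strong as whatever assumptions \cite[Theorem 3.1]{WZ21} imposes, in particular matching the function classes: the sets $\mathcal{H}$ and $L_2(\nu_T)$ here must coincide with (or be contained in) the admissible coefficient classes there, and the role of the auxiliary parameter $\varepsilon\in(0,1)$ in the equivalent norms $\|\cdot\|_{F^*_{1,2,\varepsilon}}$ must be reconciled with the norm used in \cite{WZ21}. Since \cite{WZ21} is by the same authors and \eref{eq:1} is literally its subject (as stated in the introduction), this reconciliation should be essentially bookkeeping, but it is where all the real content sits; everything else is a direct citation. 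I would therefore present the proof as a short reduction: state the Gelfand-triple reformulation, note that \textbf{(H1)}--\textbf{(H2)} imply the hypotheses of \cite[Theorem 3.1]{WZ21}, invoke that theorem for \eref{eqn1}, \eref{eqn3}, and give the one-paragraph argument above for \eref{eqn2}.
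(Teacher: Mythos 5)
Your overall strategy coincides with the paper's: the paper proves Theorem \ref{th} simply by citing \cite[Theorem 3.1]{WZ21}, whose hypotheses are exactly \textbf{(H1)}--\textbf{(H2)} on the same Gelfand triple $L^2(\mu)\subset F^*_{1,2}\equiv F_{1,2}\subset (L^2(\mu))^*$, and whose conclusions already contain all three statements \eref{eqn1}--\eref{eqn3}. So the reduction-by-citation part of your proposal is exactly what the authors do, and no separate verification of the abstract hypotheses is carried out in this paper.

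There is, however, a genuine error in the one piece you propose to prove by hand, namely \eref{eqn2}. Your argument rests on the inclusion $L^2(\mu)\subset F_{1,2}$, which is false: $F_{1,2}=V_1(L^2(\mu))=D\big((1-L)^{\frac12}\big)$ is a (generally strict) subspace of $L^2(\mu)$ with the stronger norm $\|f\|_{F_{1,2}}=|(1-L)^{\frac12}f|_2\geq |f|_2$, and the embeddings run $F_{1,2}\subset L^2(\mu)\subset F^*_{1,2}$. Knowing $\Psi(X^\epsilon)\in L^2([0,T];L^2(\mu))$ a.s.\ therefore only gives that $t\mapsto\int_0^t\Psi(X^\epsilon(s))\,ds$ is continuous with values in $L^2(\mu)$ (hence in $F^*_{1,2}$), not in $F_{1,2}$. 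The assertion \eref{eqn2} is a nontrivial gain of spatial regularity for the time-integrated drift — it is precisely what makes $L\int_0^t\Psi(X^\epsilon(s))\,ds$ a well-defined element of $F^*_{1,2}$ in \eref{eqn3} — and in \cite{WZ21} it is obtained through the approximation scheme (the analogue of the $\varepsilon,\delta$-regularizations used in Section 4 of the present paper for the skeleton equation), not from Bochner integrability alone. The fix is immediate: drop your separate derivation and take \eref{eqn2} from \cite[Theorem 3.1]{WZ21} together with \eref{eqn1} and \eref{eqn3}, as the paper does. A smaller caveat of the same flavor: coercivity of $u\mapsto L\Psi(u)$ does not follow from monotonicity and Lipschitz continuity of $\Psi$ alone when only \textbf{(H1)} is assumed (that is why the lower bound \eref{psi} or the $\Psi+\delta I$ approximation appears in Section 4), so if you ever wanted to re-verify the hypotheses of the cited theorem rather than quote it, that step would require the same two-parameter approximation rather than a direct check.
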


The purpose of this paper is to establish a large deviation principle for \eqref{eq:1}, i.e., $X^\epsilon$ on $D([0,T];F^*_{1,2})$ as $\epsilon\rightarrow0$. Before the statement of the main result, we introduce the definition of LDP.
Let $\{\Gamma^\epsilon\}_{\epsilon>0}$ be a family of random variables defined on a probability space $(\Omega,\mathcal{F},\Bbb{P})$ and taking values in a Polish space $\mathcal{U}$.  The theory of large deviations is concerned with events $A\in\mathcal{B}(\mathcal{U})$ for which probability $\Bbb{P}(\Gamma^\epsilon\in A)$ converges to zero exponentially fast as $\epsilon\rightarrow0$. The exponential decay rate of such probabilities is typically expressed in terms of a ``rate function" as defined below.

\begin{definition}\label{definition2}
A function $I:\mathcal{U}\mapsto [0,\infty]$ is called a rate function on $\mathcal{U}$, if the level set $\{e\in\mathcal{U}:I(e)\leq M\}$ is a compact subset of $\mathcal{U}$ for each $M<\infty$. For $A\in\mathcal{B}(\mathcal{U})$, we define $I(A)=\inf_{e\in A}I(e)$.
\end{definition}

\begin{definition}\label{definition3}
Let $I$ be a rate function on $\mathcal{U}$. The sequence $\{\Gamma^\epsilon\}_{\epsilon>0}$ is said to satisfy a LDP on $\mathcal{U}$ as $\epsilon\rightarrow0$ with rate function $I$ if for each closed subset $F\subset\mathcal{U}$,
\begin{eqnarray*}
\lim\sup_{\epsilon\rightarrow0}\epsilon\log\Bbb{P}(\Gamma^\epsilon\in F)\leq-I(F),
\end{eqnarray*}
and for each open subset $G\subset\mathcal{U}$,
\begin{eqnarray*}
\lim\inf_{\epsilon\rightarrow0}\epsilon\log\Bbb{P}(\Gamma^\epsilon\in G)\geq-I(G).
\end{eqnarray*}
\end{definition}

We also need to introduce the so-called skeleton equation, which is used to define the rate function in our main result.
To do this, more basic notation should be introduced. Define $\Phi:[0,\infty)\rightarrow[0,\infty)$ by
\begin{eqnarray}\label{l}
\Phi(r)=r\log r-r+1,~~r\in[0,\infty).
\end{eqnarray}
For any $\varphi\in {\mathcal{A}}$ the quantity
\begin{eqnarray}
Q(\varphi)=\int_{Z_T}\Phi(\varphi(t,z))\nu_T(dtdz)
\end{eqnarray}
is defined as a $[0,\infty]$-valued random variable.

Let $N\in\Bbb{N}$, and define
\begin{eqnarray}\label{sn}
S^N=\{g:Z_T\rightarrow[0,\infty): Q(g)\leq N\}.
\end{eqnarray}
A function $g\in S^N$ can be identified with a measure $\nu_T^g\in\mathcal{M}_{FC}(Z_T)$, defined by
\begin{eqnarray}\label{eq Zhai 5}
\nu_T^g(A)=\int_Ag(s,z)\nu_T(dsdz),~~A\in\mathcal{B}(Z_T).
\end{eqnarray}
This identification induces a topology on $S^N$ under which $S^N$ is a compact space (\cite[Appendix]{BCD}). Throughout the paper, we use this topology on $S^N$. Denote $S=\bigcup_{N=1}^\infty S^N$.

For any $g\in S$, consider the following skeleton equation
\begin{equation}\label{eq:2}
dX^g(t)-L\Psi(X^g(t))dt=\int_Zf(t,X^g(t),z)\big(g(t,z)-1\big)\nu(dz)dt,~~\forall~t\in[0,T],
\end{equation}
with initial value $x\in F^*_{1,2}$. The existence and uniqueness of the solutions to \eqref{eq:2} will be presented in Section \ref{Section4}.

\vspace{2mm}

Our main theorem is as follows.
\begin{theorem}\label{Th1}
Fix $x\in L^2(\mu)$, and suppose that \textbf{(H1)} and \textbf{(H2)} hold. Then the solution of \eqref{eq:1}, i.e., $X^\epsilon$ satisfies a LDP on $D([0,T];F^*_{1,2})$ with the rate function $I: D([0,T];F^*_{1,2})\rightarrow[0,\infty]$ defined by
\begin{eqnarray*}
I(\phi)=\inf\Big\{Q(g):\phi=X^g, g\in S\Big\},\ \ \phi\in D([0,T];F^*_{1,2}),
\end{eqnarray*}
where $X^g$ is the solution to the skeleton equation \eqref{eq:2}.
Here, we use the convention that the infimum of an empty set is $\infty$.
\end{theorem}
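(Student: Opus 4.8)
The plan is to follow the weak-convergence / variational framework for large deviations driven by Poisson random measures, using the improved sufficient condition of \cite{LSZZ}. According to that criterion, to establish the LDP for $X^\epsilon$ on $D([0,T];F^*_{1,2})$ with the stated rate function, it suffices to verify two conditions. First, a \emph{compactness} (or lower-bound) condition: for every $N<\infty$, the set $\{X^g : g\in S^N\}$ is a compact subset of $D([0,T];F^*_{1,2})$, and moreover the map $g\mapsto X^g$ is continuous from $S^N$ (with the compact topology induced by \eref{eq Zhai 5}) into $D([0,T];F^*_{1,2})$. Second, an \emph{upper-bound} (Laplace principle) condition: for any family $\{\varphi_\epsilon\}\subset\mathcal{A}$ with $\sup_\epsilon Q(\varphi_\epsilon)\le N$ along a subsequence such that $\varphi_\epsilon\to g$ in distribution (in the appropriate sense on $S^N$), the corresponding controlled/stochastic processes $Y^\epsilon$ — solutions of \eref{eq:1} with $\widetilde N^{\epsilon^{-1}}$ replaced by the controlled measure and an extra drift term built from $\varphi_\epsilon$ — converge in distribution to $X^g$ in $D([0,T];F^*_{1,2})$. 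Since \cite{WZ21} (Theorem~\ref{th}) already gives well-posedness of \eref{eq:1} and Section~4 is to supply well-posedness and a priori estimates for the skeleton equation \eref{eq:2}, the LDP reduces entirely to these two convergence statements.

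The key steps, in order, are as follows. Step 1: record from Section~4 the uniform a priori bounds for $X^g$, $g\in S^N$, namely $\sup_{g\in S^N}\big(\sup_{t\le T}\|X^g(t)\|_{F^*_{1,2}}^2 + \int_0^T |X^g(s)|_2^2\,ds\big)<\infty$, obtained by testing \eref{eq:2} against $X^g$, using the monotonicity $\langle L\Psi(u),u\rangle \le 0$ coming from \textbf{(H1)} together with the Dirichlet-form structure, and controlling the drift via \textbf{(H2)(ii)} and the exponential-integrability membership $l_i\in\mathcal H$ (which, combined with $Q(g)\le N$, makes $\int\!\int |g-1|\,l_2\,\nu\,ds$ uniformly bounded by a standard Young-type inequality $ab\le e^{\delta a^2}/\delta + (b\log b - b +1)/\delta$ applied to $a=l_2(s,z)$, $b=g(s,z)$). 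Step 2: prove continuity of $g\mapsto X^g$ on $S^N$: take $g_n\to g$, use the a priori bounds to extract weak-$*$ limits, and pass to the limit in \eref{eq:2}; the only delicate term is $\int_0^t\!\int_Z f(s,X^{g_n}(s),z)(g_n(s,z)-1)\nu(dz)ds$, which one splits into $\int f(s,X^{g_n},z)(g_n-g)\nu\,ds$ and $\int (f(s,X^{g_n},z)-f(s,X^g,z))(g-1)\nu\,ds$; the first vanishes by the topology on $S^N$ (after a truncation/cutting-off argument in $z$ using that $l_2\in\mathcal H\cap L_2(\nu_T)$, plus the uniform integrability that $Q(g_n)\le N$ provides), the second by the Lipschitz bound \textbf{(H2)(i)} and Gronwall. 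Compactness of $\{X^g:g\in S^N\}$ then follows since $S^N$ is compact and the map is continuous. Step 3: establish the stochastic-controlled convergence; this is structurally parallel to Step 2 but now on the probability space, so one needs the same estimates \emph{uniformly in $\epsilon$} for the controlled processes $Y^\epsilon$, plus control of the stochastic integral against $\widetilde N^{\epsilon^{-1}}$ (its quadratic variation carries a factor $\epsilon$, so it is negligible in the limit by \textbf{(H2)(iii)} and an $L^2$ estimate), and one passes to the limit using Skorokhod representation together with the topology on $S^N$.

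Because the paper explicitly forgoes the compactness of the Gelfand-triple embedding $L^2(\mu)\hookrightarrow F^*_{1,2}$, the usual route — compactness of the control-to-solution map via an Aubin–Lions–type argument — is unavailable, and this is where the main obstacle lies: establishing the convergence of the skeleton (and controlled) equations in $D([0,T];F^*_{1,2})$ without any compact embedding. The intended remedy, flagged in the introduction, is a combination of (a) \emph{time discretization} à la \cite{CM}, approximating $X^g$ by piecewise-in-time equations whose nonlinearity $L\Psi$ is frozen on each subinterval, so that on each piece one can solve and estimate explicitly; (b) a \emph{cutting-off argument} in the jump variable $z$ to reduce the driving measure to a finite-intensity piece on which the topology of $S^N$ gives genuine convergence; and (c) \emph{relative-entropy estimates} for the sequence of measures $\nu_T^{g_n}$ (respectively the laws of the controlled Poisson measures) to transfer bounds and pass to the limit. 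Assembling these three ingredients so that the discretization error, the truncation error, and the entropy-controlled remainder all vanish uniformly — and in the right order of limits ($\epsilon\to0$, then mesh $\to0$, then truncation removed) — is the technical heart of the proof; once the skeleton convergence and the controlled convergence are in hand, the LDP and the identification of $I$ follow immediately from the sufficient condition of \cite{LSZZ}.
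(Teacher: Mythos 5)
Your plan for the deterministic part matches the paper's in spirit: part (a) of the verification is indeed the continuity of $g\mapsto X^g$ on $S^N$ (compactness of $\{X^g:g\in S^N\}$ then being automatic), it uses the a priori bound \eref{defi4}, the truncations in Lemmas \ref{lemmal1}--\ref{lemmal2}, and the genuinely hard mixed term $\int_0^t\int_Z\big\langle f(s,X^g(s),z)(g_n-g),X^{g_n}(s)-X^g(s)\big\rangle_{F^*_{1,2}}\nu(dz)ds$ is handled by time discretization, weak compactness of bounded sets in $F^*_{1,2}$, and the relative entropy bound \eref{eq Zhai 8} together with \cite[Lemma 2.8]{BD}; your Step 2 names essentially this toolkit.

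The gap is in your statement and use of the sufficient condition for the stochastic part. What you call the second (``upper bound'') condition --- $\varphi_\epsilon\to g$ in distribution implies the controlled solutions converge in distribution to $X^g$, proved via Skorokhod representation --- is the classical Budhiraja--Chen--Dupuis criterion, not the improved criterion of \cite{LSZZ} that the paper actually verifies. The improved condition (Condition \ref{Condition}(b)) asks only that $\rho(X^{\varphi_\epsilon},Y^{\varphi_\epsilon})\to0$ in probability, where $Y^{\varphi_\epsilon}=\mathcal{G}^0(\varphi_\epsilon)$ is the skeleton equation driven by the \emph{same random control}; no convergence of the controls, no limit identification, no tightness and no Skorokhod representation are needed, and the proof is a direct It\^o/BDG/Gronwall comparison giving $\EE\big[\sup_{t\le T}\|X^{\varphi_\epsilon}(t)-Y^{\varphi_\epsilon}(t)\|^2_{F^*_{1,2}}\big]\le C\epsilon$. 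Your Step 3, by contrast, would require tightness of the laws of the controlled solutions in $D([0,T];F^*_{1,2})$ before Skorokhod representation can be invoked, and that is precisely what is unavailable here: the only uniform estimates at hand are in $L^\infty([0,T];F^*_{1,2})$ and $L^2([0,T];L^2(\mu))$, and without a compact embedding in the Gelfand triple there is no Aubin--Lions-type mechanism to upgrade them to tightness in the Skorohod space. So, as written, your stochastic step stalls at exactly the obstacle the paper's choice of criterion is designed to bypass; the repair is to verify the in-probability comparison form of condition (b) instead (where the discretization/cut-off/entropy machinery is not needed at all --- it is needed only in the deterministic part), after which your Steps 1--2 carry the rest of the argument.
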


The proof of Theorem \ref{Th1} will be given in Section \ref{Section5}. Before this, the proof of the existence and uniqueness of solutions to the skeleton equation \eqref{eq:2} will be given in Section \ref{Section4}. The following results will be used later, their proofs can be found in \cite[Lemmas 3.4 and 3.11]{BCD} (see the proofs of \cite[(3.3), (3.4), (3.5), (3.23), (3.26)]{BCD} respectively and use (\ref{eq H infinit H}) and the fact that for any compact subset $K\subset Z$, $\nu(K)<+\infty$).
\begin{lemma}\label{lemmal1}
\begin{enumerate}
       \item[(i)] Let $\chi\in\mathcal{H}\cap L_2(\nu_T)$, we have
    \begin{eqnarray}\label{eq 5}
    \sup_{g\in S^N}\int_0^T\int_Z\chi^2(s,z)(g(s,z)+1)\nu(dz)ds<\infty,~~~ \forall~ N\in\Bbb{N}.
    \end{eqnarray}
  \item[(ii)] Let $\chi\in\mathcal{H}^{\varpi}\cap L_2(\nu_T)$ for some $\varpi>0$, we have
    \begin{eqnarray}\label{eq 1}
    \sup_{g\in S^N}\int_0^T\int_Z\chi(s,z)|g(s,z)-1|\nu(dz)ds<\infty, ~~~ \forall~ N\in\Bbb{N}.
    \end{eqnarray}
    \item[(iii)]Let $\chi\in\mathcal{H}^{\infty}\cap L_2(\nu_T)$, $0\leq s\leq l\leq T$ and $N\in\Bbb{N}$, we have
  \begin{eqnarray}\label{eq 3}
    \lim_{\delta\rightarrow0}\sup_{g\in S^N}\sup_{|l-s|\leq \delta}\int_s^l\int_Z\chi(t,z)|g(t,z)-1|\nu(dz)dt=0.
    \end{eqnarray}
   \item[(iv)]Let $\chi\in\mathcal{H}^{\infty}\cap L_2(\nu_T)$ and $N\in\Bbb{N}$, then for any $\varepsilon>0$ there exists a compact set $K_\varepsilon\subset Z$ such that
\begin{eqnarray}\label{eq 4}
\sup_{g\in S^N}\int_0^T\int_{K_\varepsilon^c}\chi(s,z)|g(s,z)-1|\nu(dz)ds\leq\varepsilon,
\end{eqnarray}
where $K_\varepsilon^c$ is the complement of $K_\varepsilon$.
 \item[(v)] Let $\chi\in\mathcal{H}^\infty$, $K$ be a compact subset of $Z$ and $N\in\Bbb{N}$, we have
  \begin{eqnarray}\label{eq 2}
    \lim_{J\rightarrow\infty}\sup_{g\in S^N}\int_0^T\int_K \chi(s,z) 1_{\{\chi(s,z)\geq J\}}(s,z)g(s,z)\nu(dz)ds=0.
    \end{eqnarray}
\end{enumerate}
\end{lemma}

\section{Well-posedness for the skeleton equation}\label{Section4}
\setcounter{equation}{0}
 \setcounter{definition}{0}

\begin{definition}\label{definition1}
Given $g\in S$, a function $X^g$ is called a solution to \eqref{eq:2} if the following conditions are satisfied:
\begin{eqnarray}\label{defi1}
X^g\in C([0,T];F^*_{1,2})\cap L^2([0,T]; L^2(\mu));
\end{eqnarray}
\begin{eqnarray}\label{defi2}
\int_0^\cdot\Psi(X^g(s))ds\in C([0,T];F_{1,2});
\end{eqnarray}
and for all $t\in[0,T]$,
\begin{eqnarray}\label{defi3}
X^g(t)=x+L\int_0^t\Psi(X^g(s))ds+\int_0^t\int_Zf(s,X^g(s),z)\big(g(s,z)-1\big)\nu(dz)ds,
\end{eqnarray}
holds in $F^*_{1,2}$.
\end{definition}

The main result of this section is as follows.
\begin{theorem}\label{Th2}
Suppose that \textbf{(H1)} and \textbf{(H2)} hold. Then, for each $x\in L^2(\mu)$ and $g\in S^N$, there is a unique solution to \eqref{eq:2} in the sense of Definition \ref{definition1},
and there exists a constant $C_{N,T}\in(0,\infty)$, which depends on $N$ and $T$, such that
\begin{eqnarray}\label{defi4}
\sup_{g\in S^N}\sup_{t\in[0,T]}|X^g(t)|_2^2\leq C_{N,T}(|x|_2^2+1).
\end{eqnarray}
If \textbf{(H1)}, \textbf{(H2)(i)}, \textbf{(H2)(ii)} and the following are satisfied,
\begin{eqnarray}\label{psi}
\Psi(r)r\geq cr^2,
\end{eqnarray}
where $c\in(0,\infty)$. Then, for all $x\in F^*_{1,2}$, there is a unique solution $X^g$ to \eqref{eq:2} satisfying \eqref{defi1}-\eqref{defi3}.
\end{theorem}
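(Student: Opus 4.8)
The plan is to establish Theorem \ref{Th2} by the standard monotonicity/coercivity machinery for the Gelfand triple $(L^2(\mu), F^*_{1,2}, (L^2(\mu))^*)$, combined with the Lipschitz structure of the perturbation term. First I would treat the case $x\in L^2(\mu)$ and $g\in S^N$. I would set up a Galerkin-type (or, given the structure here, a Yosida/resolvent) approximation of the operator $L\Psi$: replace $L$ by $L(\lambda-L)^{-1}L$ or replace $\Psi$ by a regularized $\Psi_\lambda$ so that the drift operator becomes Lipschitz from $L^2(\mu)$ to $L^2(\mu)$, solve the resulting equation (now essentially an ODE in a Hilbert space with a Lipschitz nonlinearity by Picard iteration), and derive uniform-in-$\lambda$ a priori bounds. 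The key a priori estimate is obtained by applying the Itô/chain rule for the triple to $\|X^g(t)\|_{F^*_{1,2}}^2$, using Lemma 2.1 to write ${}_{(L^2(\mu))^*}\langle L\Psi(X),X\rangle = {}_{F^*_{1,2}}\langle\,\cdot\,\rangle$-type pairing; the monotonicity $\langle L\Psi(u),u\rangle_{?}\le -(\text{something})$, which here comes from $(\mathscr{E},D(\mathscr{E}))$ being a Dirichlet form and $\Psi$ nondecreasing with $\Psi(0)=0$, gives that the drift term is dissipative, so it only helps. The noise term $\int_0^t\int_Z f(s,X^g(s),z)(g(s,z)-1)\nu(dz)ds$ is controlled by \textbf{(H2)(ii)} and \eqref{f}: using $|g-1|\le l(g)+$ (a bounded-above term) together with $Q(g)\le N$ and $l_2\in\mathcal H\cap L_2(\nu_T)$, a Gronwall argument in the $\|\cdot\|_{F^*_{1,2}}^2$-norm yields $\sup_t\|X^g(t)\|_{F^*_{1,2}}^2\le C_{N,T}$, and a parallel computation on $|X^g(t)|_2^2$ using \textbf{(H2)(iii)} (here one genuinely needs the $L^2(\mu)$-bound on $f$, since the pairing $\langle L\Psi(X),X\rangle$ controls an $\mathscr E$-norm of $\Psi(X)$, and Lipschitz-ness of $\Psi$ feeds this back to $|X|_2$) delivers the explicit bound \eqref{defi4}, uniformly over $g\in S^N$. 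Passing to the limit $\lambda\to0$ uses weak compactness in $L^2([0,T]\times\Omega;L^2(\mu))$ plus the monotonicity trick (à la Minty/Browder) to identify the weak limit of $\Psi(X^\lambda)$ with $\Psi$ of the limit; continuity in $C([0,T];F^*_{1,2})$ and \eqref{defi2} follow from \eqref{eqn3}-type representation and the regularity of the two integral terms. Uniqueness is the easy part: subtract two solutions, apply the chain rule to $\|X_1-X_2\|_{F^*_{1,2}}^2$, use $\langle L(\Psi(X_1)-\Psi(X_2)), X_1-X_2\rangle\le 0$ (monotonicity of $\Psi$ + Dirichlet form), bound the difference of the $f$-terms by \eqref{f} and $\int_Z l_1(s,z)|g(s,z)-1|\nu(dz)\in L^1([0,T])$ (again from $g\in S^N$), and conclude by Gronwall.

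For the second assertion, where $x\in F^*_{1,2}$ only (not in $L^2(\mu)$) and one additionally assumes the strong coercivity $\Psi(r)r\ge cr^2$, the point is that one can no longer start the $|X^g(t)|_2^2$ estimate from a finite initial value, so instead I would exploit the coercivity to gain integrability of $X^g$ in $L^2([0,T];L^2(\mu))$ from the $\|\cdot\|_{F^*_{1,2}}$-energy balance. Concretely, in the chain rule for $\|X^g(t)\|_{F^*_{1,2}}^2$ the drift contributes a term of the form $-2\,\mathscr E_1(\Psi(X^g(s)), (1-L)^{-1}X^g(s))$-type quantity; using $\Psi(r)r\ge cr^2$ this is bounded above by $-2c\int_0^t|X^g(s)|_2^2\,ds$ plus lower-order terms, which both gives the needed a priori bound $\int_0^T|X^g(s)|_2^2\,ds<\infty$ starting merely from $x\in F^*_{1,2}$, and simultaneously lets the Gronwall argument close. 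Existence and uniqueness then follow by the same approximation/monotonicity scheme, with the $L^2(\mu)$-regularity \eqref{defi1} of the solution now being a \emph{conclusion} (from the coercivity estimate) rather than an input; for the initial regularization one approximates $x\in F^*_{1,2}$ by $x_n\in L^2(\mu)$ and checks that the $F^*_{1,2}$-continuous solutions converge.

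I expect the main obstacle to be the a priori estimate \eqref{defi4} with explicit control uniform over $g\in S^N$: one must carefully decompose $g-1 = (g-1)\mathbbm 1_{\{g\le 2\}} + (g-1)\mathbbm 1_{\{g>2\}}$, bound the first piece by a constant times $l_2$ (which lies in $L_2(\nu_T)$, hence is integrable against $\nu$ in $s$-a.e. form) and the second by a constant times $l(g)$ (using $r-1\le C\,l(r)$ for $r\ge 2$), so that $\int_0^t\int_Z l_2(s,z)|g(s,z)-1|\nu(dz)ds \le C\int_0^t\!\|l_2(s,\cdot)\|\,ds + C\,Q(g) \le C_{N,T}$; managing the interaction of this bound with the time-dependent Gronwall factor, while keeping everything uniform in $g\in S^N$ and in the regularization parameter, is the delicate bookkeeping. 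The monotonicity limit argument (identifying $\mathrm{w\text{-}lim}\,\Psi(X^\lambda)$) is technically routine but needs the coercivity/hemicontinuity of $u\mapsto L\Psi(u)$ set up correctly in the triple, which is where I would spend care to invoke the Dirichlet-form structure and Lemma 2.1 cleanly; the rest (uniqueness, continuity in $C([0,T];F^*_{1,2})$, the statement \eqref{defi2}) is comparatively mechanical.
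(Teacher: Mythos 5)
Your overall architecture (the variational/monotonicity framework in the same triple $L^2(\mu)\subset F^*_{1,2}\subset(L^2(\mu))^*$, regularize, a priori bounds via the chain rule and Gronwall, pass to the limit and identify the weak limit of $\Psi$, Gronwall uniqueness, and for $x\in F^*_{1,2}$ use \eref{psi} to recover $L^2([0,T];L^2(\mu))$-integrability from the $F^*_{1,2}$-energy balance) matches the paper in spirit, but the paper regularizes differently: it replaces $\Psi$ by $\Psi+\delta I$ and $L$ by $L-\varepsilon$, gets well-posedness of the approximations \eref{eq:3}, \eref{eq:4} by checking hemicontinuity, local monotonicity, coercivity and growth for the locally monotone SPDE theorem of Brze\'zniak--Liu--Zhu, and then proves strong Cauchy convergence in $C([0,T];F^*_{1,2})$ as $\delta\to0$ and then $\varepsilon\to0$, so the limit of $\Psi(X^g_{\varepsilon,\delta})$ is identified from this strong convergence rather than by a Minty argument. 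Your route could in principle be made to work, but note that $L(\lambda-L)^{-1}L$ is not a bounded operator (the Yosida approximation is $\lambda L(\lambda-L)^{-1}$), and regularizing $\Psi$ alone does not make the drift Lipschitz on $L^2(\mu)$; also, the $|g-1|$ bookkeeping you single out as the main obstacle is in the paper simply the cited estimate \eref{h0}, so the difficulty is not there.

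The genuine gap is at the crux of the theorem, the uniform bound \eref{defi4}. You dismiss it as ``a parallel computation on $|X^g(t)|_2^2$ using \textbf{(H2)(iii)}'', but the chain rule for $|\cdot|_2^2$ is not available for \eref{eq:2} or \eref{eq:3}: the equation holds only in $F^*_{1,2}$, $X^g$ lies merely in $L^2([0,T];L^2(\mu))$, and the pairing of $L\Psi(X^g)$ against $X^g$ in $L^2(\mu)$ is not defined within the triple (the It\^o functional the triple gives you is $\|\cdot\|^2_{F^*_{1,2}}$, which is your first computation). This is exactly why the paper proves Claim \ref{claim1}: it applies the smoothing operator $(\alpha-L)^{-\frac{1}{2}}$ to the $\delta$-regularized equation, uses the chain rule for $|(\alpha-L)^{-\frac{1}{2}}X^{g}_{\varepsilon,\delta}|_2^2$, invokes the nonpositivity \eref{c2} of $\langle(L-\varepsilon)(\alpha-L)^{-\frac{1}{2}}\Psi(u),(\alpha-L)^{-\frac{1}{2}}u\rangle$ (a consequence of the symmetric sub-Markov property together with $\Psi$ nondecreasing, taken from R\"ockner--Wu--Xie, not of abstract monotonicity of $\mathscr E$), and only then lets $\alpha\to\infty$ by lower semicontinuity before applying Gronwall and \textbf{(H2)(iii)}. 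In your Yosida scheme the same issue reappears as the need to prove $\langle L_\lambda\Psi(u),u\rangle_2\le 0$ for the regularized generator, uniformly in $\lambda$; your sketch never identifies this ingredient, and without it neither \eref{defi4} uniformly over $g\in S^N$ nor, in the non-coercive (Stefan) case, the closing of the limit argument is justified. A smaller inaccuracy of the same nature: in uniqueness, the inequality $\langle L(\Psi(X_1)-\Psi(X_2)),X_1-X_2\rangle\le 0$ is not what is actually available; the paper writes $L=-(1-L)+1$, uses Lemma 2.1 to turn the $(1-L)$ part into $-\langle\Psi(X_1)-\Psi(X_2),X_1-X_2\rangle_2\le-\tilde{\alpha}|\Psi(X_1)-\Psi(X_2)|_2^2$, and absorbs the remaining cross term $\langle\Psi(X_1)-\Psi(X_2),X_1-X_2\rangle_{F^*_{1,2}}$ by Young's inequality before Gronwall.
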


The proof of Theorem \ref{Th2} is a generalization of \cite[Theorem 3.1]{RWX} and \cite[Theorem 3.1]{WZ21}. The main difference is that there is no diffusion term, but one more drift term, in \eqref{eq:2}, so appropriate estimates on the drift term are needed. Since the drift term is involved and we apply multistep approximation arguments, the proof of Theorem \ref{Th2} is painfully long.

To prove Theorem \ref{Th2}, we consider the following approximating equations for \eqref{eq:2}:
\begin{equation} \label{eq:3}
dX^{g}_{\varepsilon}(t)+(\varepsilon-L)\Psi(X^{g}_{\varepsilon}(t))dt=\int_{Z}f(t,X^{g}_{\varepsilon}(t),z)(g(t,z)-1)\nu(dz)dt,\ \text{in}\ [0,T],
\end{equation}
with initial value $X^{g}_{\varepsilon}(0)=x\in L^2(\mu)$, where $\varepsilon\in(0,1)$. We have the following proposition for \eqref{eq:3}.

\begin{proposition}\label{prop2}
Suppose that \textbf{(H1)} and \textbf{(H2)} hold. Then, for each $x\in L^2(\mu)$ and $g\in S^N$, there is a unique solution to \eqref{eq:3}, denoted by $X^{g}_{\varepsilon}$.
This solution has the following properties:
\begin{eqnarray}\label{p1}
X^{g}_{\varepsilon}\in L^2([0,T]; L^2(\mu))\cap C([0,T];F^*_{1,2});
\end{eqnarray}
\begin{eqnarray}\label{p2}
X^{g}_{\varepsilon}(t)+\!(\varepsilon-L)\!\int_0^t\!\Psi(X^{g}_{\varepsilon}(s))ds\!=\!x\!+\!\int_0^t\!\int_{Z}\!f(s,X^{g}_{\varepsilon}(s),z)\big(g(s,z)-1\big)\nu(dz)ds,\ \forall t\in[0,T]
\end{eqnarray}
holds in $F^*_{1,2}$.
Furthermore, there exists a constant $C_{N,T}\in(0,\infty)$, which is dependent on $N$ and $T$, but independent of $\varepsilon$, such that for all $N\in\Bbb{N}$,
\begin{eqnarray}\label{p3}
\sup_{g\in S^N}\sup_{t\in[0,T]}|X^{g}_{\varepsilon}(t)|_2^2\leq C_{N,T}\big(|x|_2^2+1\big).
\end{eqnarray}
If \textbf{(H1)}, \textbf{(H2)(i)}, \textbf{(H2)(ii)} and \eqref{psi} are satisfied. Then, for all $x\in F^*_{1,2}$, there is a unique solution to \eqref{eq:3} satisfying \eqref{p1} and \eqref{p2}.
\end{proposition}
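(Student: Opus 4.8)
The plan is to regard \eref{eq:3} (for fixed $\varepsilon\in(0,1)$) as a deterministic evolution equation in the Gelfand triple $L^2(\mu)\subset F^*_{1,2}\subset(L^2(\mu))^*$, namely $\dot X^g_\varepsilon=-(\varepsilon-L)\Psi(X^g_\varepsilon)+B^g(t,X^g_\varepsilon)$ with $B^g(t,v):=\int_Z f(t,v,z)(g(t,z)-1)\,\nu(dz)$, and to handle the principal part $-(\varepsilon-L)\Psi$ by monotone‑operator techniques as in \cite{RWX,WZ21}, treating $B^g$ as a perturbation. The preliminary step is to control $B^g$ using the standard estimates valid for $g\in S^N$ --- for every $h\in\mathcal H$, $\sup_{g\in S^N}\int_{Z_T}h\,|g-1|\,\nu_T(dzds)<\infty$ and $\sup_{g\in S^N}\int_{Z_T}h^2g\,\nu_T(dzds)<\infty$, cf.\ \cite{BCD,LSZZ} --- applied to $l_1,l_2,l_3$. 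Together with \textbf{(H2)} and Remark \ref{equivalent}, these show that $B^g(t,\cdot)$ is a well-defined $L^2(\mu)$-valued (hence $F^*_{1,2}$-valued) map with $\|B^g(t,u)-B^g(t,v)\|_{F^*_{1,2,\varepsilon}}\le\varepsilon^{-1/2}\beta^1_g(t)\|u-v\|_{F^*_{1,2,\varepsilon}}$, $\|B^g(t,v)\|_{F^*_{1,2}}\le\beta^2_g(t)(\|v\|_{F^*_{1,2}}+1)$ and $|B^g(t,v)|_2\le\beta^3_g(t)(|v|_2+1)$, where $\beta^i_g(t):=\int_Z l_i(t,z)|g(t,z)-1|\,\nu(dz)$ obeys $\sup_{g\in S^N}\|\beta^i_g\|_{L^1([0,T])}<\infty$.

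Two energy identities do most of the work. Testing \eref{eq:3} against $(\varepsilon-L)^{-1}X^g_\varepsilon(t)$ and using the self-adjointness of $(\varepsilon-L)^{-1}$ gives $\tfrac12\tfrac{d}{dt}\|X^g_\varepsilon(t)\|^2_{F^*_{1,2,\varepsilon}}+\langle\Psi(X^g_\varepsilon(t)),X^g_\varepsilon(t)\rangle_2=\langle B^g(t,X^g_\varepsilon(t)),X^g_\varepsilon(t)\rangle_{F^*_{1,2,\varepsilon}}$, in which the second left-hand term is $\ge0$ by \textbf{(H1)}. Pairing \eref{eq:3} with $X^g_\varepsilon(t)$ in $L^2(\mu)$ gives $\tfrac12\tfrac{d}{dt}|X^g_\varepsilon(t)|_2^2+\varepsilon\langle\Psi(X^g_\varepsilon(t)),X^g_\varepsilon(t)\rangle_2+\mathscr{E}(\Psi(X^g_\varepsilon(t)),X^g_\varepsilon(t))=\langle B^g(t,X^g_\varepsilon(t)),X^g_\varepsilon(t)\rangle_2$, where now \emph{both} extra left-hand terms are $\ge0$: the first by \textbf{(H1)}, the second because $\mathscr{E}(\Phi(w),w)\ge0$ for every monotone Lipschitz $\Phi$ with $\Phi(0)=0$ (since $(P_t)_{t\ge0}$ is sub-Markovian, $\mathscr{E}$ is a Dirichlet form and normal contractions operate). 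Both identities are first run on a Galerkin scheme, where the chain rule and all pairings are classical. From the first, applied to the difference of two solutions together with the Lipschitz bound on $B^g$ and Gronwall's lemma (using $\beta^1_g\in L^1$), one gets uniqueness. From the second, applied to $X^g_\varepsilon$ itself together with $|B^g(t,v)|_2\le\beta^3_g(t)(|v|_2+1)$ and Gronwall, one gets $\sup_{t\le T}|X^g_\varepsilon(t)|_2^2\le C_{N,T}(|x|_2^2+1)$ with $C_{N,T}$ independent of $g\in S^N$ and of $\varepsilon$, i.e.\ \eref{p3}, and this survives the passage to the limit.

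For existence I would exploit that the principal part is a subdifferential: with $\psi(r):=\int_0^r\Psi(s)\,ds$ (convex, by \textbf{(H1)}) and $\Phi(v):=\int_E\psi(v)\,d\mu$ viewed on $H:=F^*_{1,2}$ (set $\Phi=+\infty$ off $L^2(\mu)$), the identity $\langle(\varepsilon-L)\Psi(v),w\rangle_{F^*_{1,2,\varepsilon}}=\langle\Psi(v),w\rangle_2$ shows $(\varepsilon-L)\Psi=\partial_H\Phi$, a maximal monotone operator on $H$; so \eref{eq:3} is a Lipschitz perturbation of a gradient flow on $H$. I would then close the loop by a fixed-point argument: freeze the drift along $Y\in C([0,T];F^*_{1,2})$, solve $\dot X+(\varepsilon-L)\Psi(X)=B^g(\cdot,Y)$, $X(0)=x$, by the Brezis theory (equivalently, by Galerkin plus Minty's lemma for the $\Psi$-nonlinearity, with the a priori bounds above), and note that the first energy identity applied to the difference of the solutions attached to $Y_1,Y_2$ gives $\|X_{Y_1}(t)-X_{Y_2}(t)\|_{F^*_{1,2,\varepsilon}}\le\varepsilon^{-1/2}\int_0^t\beta^1_g(s)\|Y_1(s)-Y_2(s)\|_{F^*_{1,2,\varepsilon}}\,ds$, so that a suitable iterate of $Y\mapsto X_Y$ is a contraction on $C([0,T];F^*_{1,2})$ (because $\beta^1_g\in L^1$) and the Banach fixed-point theorem yields the unique solution of \eref{eq:3}, with \eref{p1}--\eref{p2} and \eref{p3} holding by the estimates above evaluated at the fixed point. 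When \eref{psi} holds, the coercivity $\langle\Psi(v),v\rangle_2\ge c|v|_2^2$ it supplies turns the integrated first energy identity into an a priori bound on $\int_0^T|X^g_\varepsilon(t)|_2^2\,dt$, which is exactly what permits the rougher datum $x\in F^*_{1,2}=\overline{D(\Phi)}^{H}$; without \eref{psi} one stays with $x\in L^2(\mu)=D(\Phi)$, where \eref{p3} already supplies this integrability.

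The step I expect to be genuinely delicate --- and the reason the argument is long --- is that one must cope with two features the compact-embedding literature never faces together: the drift $B^g$ is non-variational (not a gradient perturbation) and there is no compact embedding $L^2(\mu)\hookrightarrow F^*_{1,2}$, so one cannot pass to the limit in $B^g(\cdot,X_n)$ by an Aubin--Lions-type compactness argument; the freezing/fixed-point structure above is precisely what circumvents this, by relying only on the $F^*_{1,2}$-Lipschitz continuity of $f$ from \textbf{(H2)(i)} and the monotonicity of the principal part. A second obstacle is that, under \textbf{(H1)} alone, the principal part is not coercive on $L^2(\mu)$, which is why the $L^2(\mu)$-estimate \eref{p3} must be obtained separately (second energy identity plus Gronwall), and why the case $x\in F^*_{1,2}$ genuinely needs the extra hypothesis \eref{psi}. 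The remaining work --- carrying all of this through the Galerkin approximation and, for the $F^*_{1,2}$-datum case, a $\delta$-regularization $\Psi\rightsquigarrow\Psi+\delta\,\mathrm{id}$, while keeping every constant uniform over $g\in S^N$ --- is routine but lengthy.
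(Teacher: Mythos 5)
Your functional-analytic setting coincides with the paper's (the triple $L^2(\mu)\subset F^*_{1,2}\subset(L^2(\mu))^*$, the $L^1([0,T])$ bounds on $\beta^i_g$ uniform over $S^N$, the $F^*_{1,2,\varepsilon}$-Lipschitz bound on the drift, and the Gronwall arguments for uniqueness), and your subdifferential-plus-frozen-drift fixed point could in principle replace the paper's route, which is to verify the four conditions of \cite[Theorem 1.2]{BLZ} for the full operator (coercivity supplied by \eref{psi}, or by the regularization $\Psi+\delta\,\mathrm{id}$), and then pass to the limit $\delta\to0$ (Claims \ref{claim1}--\ref{claim3}). However, two of your steps have genuine gaps. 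The first concerns \eref{p3}. Your ``second energy identity'' $\tfrac12\tfrac{d}{dt}|X|_2^2+\varepsilon\langle\Psi(X),X\rangle_2+\mathscr{E}(\Psi(X),X)=\langle B^g(t,X),X\rangle_2$ is precisely the identity that is \emph{not} available in this Gelfand triple: with $V=L^2(\mu)$ and $H=F^*_{1,2}$ the variational chain rule only controls $\tfrac{d}{dt}\|X\|^2_{F^*_{1,2}}$, and $\mathscr{E}(\Psi(X(t)),X(t))$ is not even defined, since neither $X(t)$ nor $\Psi(X(t))$ lies in $F_{1,2}$. Your proposed remedy, ``run it on a Galerkin scheme'', does not work here: the projections are adapted to the $H$-inner product, there is no basis simultaneously orthogonal in $F^*_{1,2}$ and in $L^2(\mu)$ (no compact embedding, no eigenbasis for a general Dirichlet operator $L$), and pairing the projected equation with $u_n$ again yields the $H$-norm rather than $|u_n|_2^2$. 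The paper obtains \eref{p3} by a different device: it applies $(\alpha-L)^{-\frac12}$ to the $\delta$-regularized equation \eref{eq:4}, uses the inequality \eref{c2} from \cite{RWX} (the rigorous substitute for your positivity claim $\mathscr{E}(\Psi(w),w)\ge0$) together with the contraction property of $\sqrt{\alpha}(\alpha-L)^{-\frac12}$ on $L^2(\mu)$, lets $\alpha\to\infty$ before Gronwall, and finally recovers \eref{p3} for $X^g_\varepsilon$ by lower semicontinuity. Some argument of this type is indispensable, since \eref{p3} is used repeatedly in Section 5.

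The second gap is in your existence mechanism. Under \textbf{(H1)} alone $\Psi$ may be degenerate or bounded, and then $\Phi(v)=\int_E\psi(v)\,d\mu$ extended by $+\infty$ off $L^2(\mu)$ need not be lower semicontinuous on $F^*_{1,2}$ (for bounded $\Psi$, $\psi$ grows only linearly, so one can have $v_n\to v$ in $F^*_{1,2}$ with $\Phi(v_n)$ bounded and $v\notin L^2(\mu)$); hence ``$(\varepsilon-L)\Psi=\partial_H\Phi$ is maximal monotone'' is not automatic, and the identification of $\partial_H\Phi$ with the stated operator on the stated domain also requires proof. Moreover, the frozen forcing $B^g(\cdot,Y)$ is only $L^1$ in time ($\beta^2_g\in L^1([0,T])$, not $L^2$), whereas the Brezis strong-solution theory you invoke needs $L^2(0,T;H)$ forcing; with $L^1$ forcing one only gets integral solutions, and then \eref{p2}, i.e. $\int_0^\cdot\Psi(X(s))\,ds\in C([0,T];F_{1,2})$, no longer comes for free. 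These defects are repairable, and the paper's own repair is instructive: restore coercivity by $\Psi+\delta\,\mathrm{id}$ so that \cite[Theorem 1.2]{BLZ} applies (with the drift absorbed through $h_1,h_2\in L^1$, cf. Remark \ref{remarkh}), derive the $\delta$-uniform bound of Claim \ref{claim1}, and pass to the limit $\delta\to0$ via Claims \ref{claim2} and \ref{claim3}. As written, though, your derivation of \eref{p3} and your existence step do not go through.
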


\begin{proof}
First, let us consider the case with initial data $x\in F^*_{1,2}$ and assume that \textbf{(H1)}, \textbf{(H2)(i)}, \textbf{(H2)(ii)} and \eqref{psi} are satisfied.  Set $V:=L^2(\mu)$ and $H:=F^*_{1,2}$. For $u\in V$, define
\begin{eqnarray*}
A(t,u):=(L-\varepsilon)\Psi(u)+\int_Zf(t,u,z)\big(g(t,z)-1\big)\nu(dz).
\end{eqnarray*}
Notice that from \textbf{(H2)(ii)} we have
\begin{eqnarray*}
     &&\!\!\!\!\!\!\!\!\int_0^T\int_Z\|f(t,u,z)\big(g(t,z)-1\big)\|_{F^*_{1,2}}\nu(dz)dt\nonumber\\
\leq&&\!\!\!\!\!\!\!\!\big(\|u\|_{F^*_{1,2}}+1\big)\int_0^T\int_Z l_2(t,z)|g(t,z)-1|\nu(dz)dt,
\end{eqnarray*}
and from \eqref{eq 1} and (\ref{eq H infinit H})
we know that, there exists a constant $C_{N}$ such that
\begin{eqnarray}\label{h0}
C_{N}:=\sup_{i=1,2,3}\sup_{g\in S^N}\int_{Z_T}l_i(s,z)|g(s,z)-1|\nu(dz)ds<\infty.
\end{eqnarray}
Then, $A(t,u)$ can be regarded as a mapping from $[0,T]\times L^2(\mu)$ to $(L^2(\mu))^*$. Under the Gelfand triple $L^2(\mu)\subset F^*_{1,2}\equiv F_{1,2}\subset (L^2(\mu))^*$, we can check the four conditions in \cite[Theorem 1.2]{BLZ} to get the existence and uniqueness of solutions to \eqref{eq:3}. As mentioned before, compared with \cite[Lemma 3.1, Step 1]{RWX}, our $A$ here has one more drift term $\int_{Z}f(t,u,z)\big(g(t,z)-1\big)\nu(dz)$, so we only need to estimate that term.

\vspace{2mm}
\textbf{(i) }\textbf{Hemicontinuity}
\vspace{2mm}

Let $u,v,w\in V(:=L^2(\mu))$. We need to show that for $\iota\in \Bbb{R},~ |\iota|\leq1$,
$$
\lim_{\iota\rightarrow0}~_{(L^2(\mu))^*}\big\langle A(t,u+\iota v),w\big\rangle_{L^2(\mu)}-~ _{(L^2(\mu))^*}\big\langle A(t,u),w\big\rangle_{L^2(\mu)}=0.
$$
Since in \cite[ Lemma 3.1, Step 1]{RWX}, the authors proved that
$$\lim_{\iota\rightarrow0}~ _{(L^2(\mu))^*}\big\langle (L-\varepsilon)\Psi(u+\iota v),w\big\rangle_{L^2(\mu)}-~ _{(L^2(\mu))^*}\big\langle (L-\varepsilon)\Psi(u),w\big\rangle_{L^2(\mu)}=0,$$
here we only need to prove
\begin{eqnarray}\label{hemi}
&&\!\!\!\!\!\!\!\!\lim_{\iota\rightarrow0}~ _{(L^2(\mu))^*}\big\langle \int_Z f(t,u+\iota v,z)\big(g(t,z)-1\big)\nu(dz),w\big\rangle_{L^2(\mu)}\nonumber\\
&&\!\!\!\!\!\!\!\!-_{(L^2(\mu))^*}\big\langle \int_Z f(t,u,z)(g(t,z)-1)\nu(dz),w\big\rangle_{L^2(\mu)}=0.
\end{eqnarray}
By \eqref{triple1} and \textbf{(H2)(i)}, we have
\begin{eqnarray}\label{hemi1}
&&\!\!\!\!\!\!\!\! _{(L^2(\mu))^*}\big\langle \int_Z \big(f(t,u+\iota v,z)-f(t,u,z)\big)\big(g(t,z)-1\big)\nu(dz), w\big\rangle_{L^2(\mu)}\nonumber\\
=&&\!\!\!\!\!\!\!\!\big\langle\int_Z \big(f(t,u+\iota v,z)-f(t,u,z)\big)\big(g(t,z)-1\big)\nu(dz), w\big\rangle_{F^*_{1,2}}\nonumber\\
\leq&&\!\!\!\!\!\!\!\!\big\|\int_Z \big(f(t,u+\iota v,z)-f(t,u,z)\big)\big(g(t,z)-1\big)\nu(dz)\big\|_{F^*_{1,2}} \cdot \|w\|_{F^*_{1,2}}\nonumber\\
\leq&&\!\!\!\!\!\!\!\!\iota\cdot\|v\|_{F^*_{1,2}}\cdot\|w\|_{F^*_{1,2}}\cdot\int_Z l_1(t,z)|g(t,z)-1|\nu(dz).
\end{eqnarray}
Denoting
\begin{eqnarray}\label{h1}
h_1(t):=\int_Z l_1(t,z)|g(t,z)-1|\nu(dz),
\end{eqnarray}
from \eqref{h0}, we know that $h_1\in L^1([0,T];\Bbb{R}^+)$ and
\begin{eqnarray}\label{hemi2}
\iota\cdot\|v\|_{F^*_{1,2}}\cdot\|w\|_{F^*_{1,2}}\cdot h_1(t)\rightarrow0,~\text{as}~\iota\rightarrow0,~\lambda_T\text{-a.s. on }[0,T],
\end{eqnarray}
which implies \eqref{hemi}.

\vspace{2mm}
\textbf{(ii)} \textbf{Local Monotonicity}
\vspace{2mm}

Let $u,v\in V(:=L^2(\mu))$.
By \eqref{triple1}, \textbf{(H2)(i)}, \eqref{h1}, and \eqref{h0}, we have
\begin{eqnarray*}
&&\!\!\!\!\!\!\!\! _{(L^2(\mu))^*}\big\langle \int_Z \big(f(t,u,z)-f(t,v,z)\big)\big(g(t,z)-1\big)\nu(dz), u-v\big\rangle_{L^2(\mu)}\nonumber\\
=&&\!\!\!\!\!\!\!\!\big\langle\int_Z \big(f(t,u,z)-f(t,v,z)\big)\big(g(t,z)-1\big)\nu(dz), u-v\big\rangle_{F^*_{1,2}}\nonumber\\
\leq&&\!\!\!\!\!\!\!\!\big\|\int_Z \big(f(t,u,z)-f(t,v,z)\big)\big(g(t,z)-1\big)\nu(dz)\big\|_{F^*_{1,2}} \cdot \|u-v\|_{F^*_{1,2}}\nonumber\\
\leq&&\!\!\!\!\!\!\!\!h_1(t)\cdot\|u-v\|^2_{F^*_{1,2}}.
\end{eqnarray*}
Let $Lip\Psi$ denote the Lipschitz constant of $\Psi$.
From \cite[page: 2138, (ii)]{RWX}, we know that
\begin{eqnarray*}
&&\!\!\!\!\!\!\!\!_{(L^2(\mu))^*}\big\langle(L-\varepsilon)(\Psi(u)-\Psi(v)), u-v\big\rangle_{L^2(\mu)}\nonumber\\
\leq&&\!\!\!\!\!\!\!\!\frac{(1-\varepsilon)^2}{\widetilde{\alpha}}\cdot \|u-v\|^2_{F^*_{1,2}},
\end{eqnarray*}
where
\begin{eqnarray*}
\widetilde{\alpha}:=(k+1)^{-1},~~~~k:=Lip\Psi.
\end{eqnarray*}
So
\begin{eqnarray}\label{monoto1}
&&_{(L^2(\mu))^*}\big\langle A(t,u)-A(t,v),u-v\big\rangle_{L^2(\mu)}\leq\Big(\frac{(1-\varepsilon)^2}{\widetilde{\alpha}}+h_1(t)\Big)\cdot \|u-v\|^2_{F^*_{1,2}},
\end{eqnarray}
which implies the local monotonicity.

\vspace{2mm}
\textbf{(iii)} \textbf{Coercivity}
\vspace{2mm}

Let $u\in V(:=L^2(\mu))$. By \eqref{triple1} and \textbf{(H2)(ii)},
\begin{eqnarray*}
&&\!\!\!\!\!\!\!\! _{(L^2(\mu))^*}\big\langle \int_Z f(t,u,z)\big(g(t,z)-1\big)\nu(dz), u\big\rangle_{L^2(\mu)}\nonumber\\
=&&\!\!\!\!\!\!\!\!\big\langle\int_Z f(t,u,z)\big(g(t,z)-1\big)\nu(dz), u\big\rangle_{F^*_{1,2}}\nonumber\\
\leq&&\!\!\!\!\!\!\!\!\big\|\int_Z f(t,u,z)\big(g(t,z)-1\big)\nu(dz)\big\|_{F^*_{1,2}} \cdot \|u\|_{F^*_{1,2}}\nonumber\\
\leq&&\!\!\!\!\!\!\!\!\int_Zl_2(t,z)|g(t,z)-1|\nu(dz)\cdot \big(\|u\|_{F^*_{1,2}}+1\big)\|u\|_{F^*_{1,2}},
\end{eqnarray*}
Denoting
\begin{eqnarray*}
h_2(t):=\int_Z l_2(t,z)\big|g(t,z)-1\big|\nu(dz),
\end{eqnarray*}
from \eqref{h0}, we know that $h_2\in L^1([0,T];\Bbb{R}^+)$ and
\begin{eqnarray*}
&&\!\!\!\!\!\!\!\!\int_Zl_2(t,z)\big|g(t,z)-1\big|\nu(dz)\cdot \big(\|u\|_{F^*_{1,2}}+1\big)\|u\|_{F^*_{1,2}}\\
\leq&&\!\!\!\!\!\!\!\! 2h_2(t)\cdot\big(\|u\|^2_{F^*_{1,2}}+1\big).
\end{eqnarray*}
From \cite[page:2138-2139, (iii)]{RWX},  for any $\sigma>0$,
\begin{eqnarray*}
&&\!\!\!\!\!\!\!\!_{(L^2(\mu))^*}\big\langle (\varepsilon-L)\Psi(u),u\big\rangle_{L^2(\mu)}\nonumber\\
\leq&&\!\!\!\!\!\!\!\!\Big(-c+\sigma^2k^2(1-\varepsilon)\Big)\cdot|u|_2^2+\frac{(1-\varepsilon)}{\sigma^2}\cdot\|u\|^2_{F^*_{1,2}}.
\end{eqnarray*}
Therefore,
\begin{eqnarray}\label{coercivity1}
&&\!\!\!\!\!\!\!\!_{(L^2(\mu))^*}\big\langle A(t,u),u\big\rangle_{L^2(\mu)}\nonumber\\
\leq&&\!\!\!\!\!\!\!\!\Big(-c+\sigma^2k^2(1-\varepsilon)\Big)\cdot|u|_2^2+\Big(\frac{(1-\varepsilon)}{\sigma^2}+2h_2(t)\Big)\cdot\big(\|u\|^2_{F^*_{1,2}}+1\big).
\end{eqnarray}
Choosing $\sigma$ small enough, $-c+\sigma^2k^2(1-\varepsilon)$ becomes
negative, which implies the coercivity.

\vspace{2mm}
\textbf{(iv) Growth}

Let $u\in V(:=L^2(\mu))$. Notice that
\begin{eqnarray*}
\|A(t,u)\|_{(L^2(\mu))^*}=\sup_{|v|_2=1}~_{(L^2(\mu))^*}\big\langle(L-\varepsilon)(\Psi(u))+\int_Zf(t,u,z)\big(g(t,z)-1\big)\nu(dz)ds,v\big\rangle_{L^2(\mu)}.
\end{eqnarray*}
From \cite[page:2139, (iv)]{RWX}, we know that
$$\|(L-\varepsilon)\Psi(u)\|_{(L^2(\mu))^*}\leq2k|u|_2.$$
Since $L^2(\mu)\subset F^*_{1,2}\subset (L^2(\mu))^*$ continuously and densely, from \textbf{(H2)(ii)} we get
$$_{(L^2(\mu))^*}\big\langle \int_Z f(t,u,z)\big(g(t,z)-1\big)\nu(dz),v\big\rangle_{L^2(\mu)}\leq h_2(t)\cdot(|u|_2+1)\cdot|v|_2,$$
so
\begin{eqnarray}\label{growth1}
\|A(t,u)\|_{(L^2(\mu))^*}\leq\big(2k+h_2(t)\big)\cdot(|u|_2+1).
\end{eqnarray}
Hence the growth holds.

Then by \cite[Theorem 1.2]{BLZ}, there exists a unique solution to \eqref{eq:3}, which we denote as $X^{g}_{\varepsilon}$, which takes value in $F^*_{1,2}$ and satisfies \eqref{p1} and \eqref{p2}.
\end{proof}

\begin{remark}\label{remarkh}
As shown above,
the coefficient in the right-hand sides of \eqref{monoto1} has a term with $h_1(t)$,
and
\eqref{coercivity1} and \eqref{growth1} have terms with $h_2(t)$.
These look different from the conditions in \cite[Theorem 1.2]{BLZ}, where the coefficients are constants.
However,
since \eqref{h0} holds, we know that both $\int_{0}^{T}h_1(t)dt$ and $\int_{0}^{T}h_2(t)dt$ are bounded, so by using the same idea as in the proof of \cite[Theorem 1.2]{BLZ}, it is not difficult to get Proposition \ref{prop2}. Accordingly, here we still regard (i)-(iv) as the corresponding conditions of \cite[Theorem 1.2]{BLZ}.
\end{remark}

\vspace{2mm}
In the following, we consider the case that \eqref{psi} is not satisfied.
If $x\in F^*_{1,2}$, but \eqref{psi} is not satisfied,
then (i), (ii), and (iv) still hold, but (iii) may not be true in general. In this case, we shall approximate $\Psi$ by $\Psi+\delta \textbf{1}$, $\delta\in(0,1)$, i.e., we consider the following approximating equations:
\begin{eqnarray}\label{eq:4}
dX^{g}_{\varepsilon,\delta}(t)=&&\!\!\!\!\!\!\!\!(L-\varepsilon)\big(\Psi(X^{g}_{\varepsilon,\delta}(t))+\delta X^{g}_{\varepsilon,\delta}(t)\big)dt\nonumber\\
&&\!\!\!\!\!\!\!\!+\int_{Z}f(t,X^{g}_{\varepsilon,\delta}(t),z)\big(g(t,z)-1\big)\nu(dz)dt,\ \text{in}\ [0,T],
\end{eqnarray}
with initial value $X^{g}_{\varepsilon,\delta}(0)=x\in L^2(\mu)$. In this case, since $L^2(\mu)\subset F^*_{1,2}$ continuously and densely, the initial value can be regarded as in $F^*_{1,2}$, and obviously $(\Psi(r)+\delta r)r\geq \delta r^2$, which corresponds to the condition \eqref{psi}. Then, by similar arguments as above, it is easy to prove that if $x\in F^*_{1,2}$, there exists a unique solution $X^{g}_{\varepsilon,\delta}$ to \eqref{eq:4} such that $X^{g}_{\varepsilon,\delta}\in L^2\big([0,T];L^2(\mu)\big)\cap C([0,T];F_{1,2}^*)$, and for all $t\in[0,T]$,
\begin{eqnarray}\label{eq:4.1}
X^{g}_{\varepsilon,\delta}(t)=&&\!\!\!\!\!\!\!\!x+(L-\varepsilon)\int_0^t(\Psi(X^{g}_{\varepsilon,\delta}(s))+\delta X^{g}_{\varepsilon,\delta}(s))ds\nonumber\\
&&\!\!\!\!\!\!\!\!+\int_0^t\int_{Z}f(t,X^{g}_{\varepsilon,\delta}(s),z)\big(g(s,z)-1\big)\nu(dz)ds
\end{eqnarray}
holds in $F^*_{1,2}$, and
\begin{eqnarray}\label{eq:4.2}
\sup_{t\in[0,T]}\|X^{g}_{\varepsilon,\delta}(t)\|^2_{F^*_{1,2}}<\infty.
\end{eqnarray}
Next, we want to prove that the sequence $\{X^{g}_{\varepsilon,\delta}\}$
converges to the solution of \eqref{eq:3} as $\delta\to 0$. From now on,
we assume that the initial value $x\in L^2(\mu)$, and we have the following result for \eqref{eq:4}.

\begin{claim}\label{claim1}
For any $\varepsilon, \delta\in(0,1), g\in S^N, t\in[0,T],$
\begin{eqnarray*}
\sup_{s\in[0,t]}|X^{g}_{\varepsilon,\delta}(s)|_2^2+4\delta\int_0^t\|X^{g}_{\varepsilon,\delta}(s)\|^2_{F_{1,2}}ds\leq C_{N,T}(|x|_2^2+1).
\end{eqnarray*}
\end{claim}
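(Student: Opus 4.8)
The plan is to obtain the bound from the It\^o (energy) identity for $t\mapsto|X^{g}_{\varepsilon,\delta}(t)|_2^2$, controlling the operator $(L-\varepsilon)(\Psi+\delta I)$ by the Dirichlet-form structure and the Poisson drift by \eref{h0}; the whole point of the regularization by $\delta$ is to make this identity legitimate. Although \eref{eq:4} was solved above in the Gelfand triple $L^2(\mu)\subset F^*_{1,2}\subset(L^2(\mu))^*$, I would first note that $\Psi_\delta:=\Psi+\delta I$ is again nondecreasing and Lipschitz, with $\Psi_\delta(0)=0$ and, in addition, $\Psi_\delta(r)r\ge\delta r^2$. Using the unit-contraction (Markov) property of $(\mathscr E,D(\mathscr E))$ — so that $\Psi_\delta(u)\in D(\mathscr E)=F_{1,2}$ with $\mathscr E(\Psi_\delta(u),\Psi_\delta(u))\le(Lip\Psi+\delta)^2\,\mathscr E(u,u)$ — together with $\mathscr E(\Psi(u),u)\ge0$ (valid since $\Psi$ is nondecreasing) and $\langle\Psi(u),u\rangle_2\ge0$ (valid since $\Psi(0)=0$), one checks the four conditions of \cite[Theorem 1.2]{BLZ} for $A(t,u)=(L-\varepsilon)\Psi_\delta(u)+\int_Zf(t,u,z)(g(t,z)-1)\nu(dz)$ in the finer Gelfand triple $F_{1,2}\subset L^2(\mu)\subset F^*_{1,2}$; here, in contrast with \eref{eq:3}, the coercivity genuinely produces a term $-\delta\mathscr E(u,u)$. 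By uniqueness of the solution of \eref{eq:4.1}, this variational solution coincides with $X^{g}_{\varepsilon,\delta}$, so $X^{g}_{\varepsilon,\delta}\in L^2([0,T];F_{1,2})\cap C([0,T];L^2(\mu))$ and the It\^o formula for $|X^{g}_{\varepsilon,\delta}(t)|_2^2$ is available.

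Applying that formula, for all $t\in[0,T]$,
\begin{eqnarray*}
|X^{g}_{\varepsilon,\delta}(t)|_2^2&=&|x|_2^2+2\int_0^t{}_{F^*_{1,2}}\big\langle(L-\varepsilon)\Psi_\delta(X^{g}_{\varepsilon,\delta}(s)),X^{g}_{\varepsilon,\delta}(s)\big\rangle_{F_{1,2}}\,ds\\
&&+2\int_0^t\Big\langle\int_Zf(s,X^{g}_{\varepsilon,\delta}(s),z)(g(s,z)-1)\nu(dz),\,X^{g}_{\varepsilon,\delta}(s)\Big\rangle_2\,ds.
\end{eqnarray*}
For the first integrand, since ${}_{F^*_{1,2}}\langle(L-\varepsilon)w,u\rangle_{F_{1,2}}=-\mathscr E(w,u)-\varepsilon\langle w,u\rangle_2$ for $w,u\in F_{1,2}$, expanding $\Psi_\delta=\Psi+\delta I$ and discarding the nonnegative quantities $\mathscr E(\Psi(u),u)$, $\varepsilon\langle\Psi(u),u\rangle_2$ and $\varepsilon\delta|u|_2^2$ leaves $-\delta\mathscr E(u,u)=-\delta\|u\|_{F_{1,2}}^2+\delta|u|_2^2$. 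For the second integrand, \textbf{(H2)(iii)} and the Cauchy--Schwarz inequality give the upper bound $2h_3(s)\big(|X^{g}_{\varepsilon,\delta}(s)|_2^2+1\big)$, where $h_3(s):=\int_Zl_3(s,z)|g(s,z)-1|\nu(dz)$, and \eref{h0} applied with $\chi=l_3\in\mathcal H\cap L_2(\nu_T)$ yields $\sup_{g\in S^N}\int_0^Th_3(s)\,ds<\infty$. Collecting these, for all $t\in[0,T]$,
\begin{eqnarray*}
|X^{g}_{\varepsilon,\delta}(t)|_2^2+2\delta\int_0^t\|X^{g}_{\varepsilon,\delta}(s)\|_{F_{1,2}}^2\,ds\le|x|_2^2+\int_0^t\big(2\delta+4h_3(s)\big)\big(|X^{g}_{\varepsilon,\delta}(s)|_2^2+1\big)\,ds.
\end{eqnarray*}

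To finish, I would drop the nonnegative term $2\delta\int_0^t\|X^{g}_{\varepsilon,\delta}(s)\|_{F_{1,2}}^2\,ds$ and apply Gronwall's lemma, the coefficient $2\delta+4h_3$ lying in $L^1([0,T])$ with $\int_0^T(2\delta+4h_3(s))\,ds$ bounded uniformly over $\varepsilon,\delta\in(0,1)$ and $g\in S^N$ (by $\delta<1$ and \eref{h0}); this gives $\sup_{s\in[0,t]}|X^{g}_{\varepsilon,\delta}(s)|_2^2\le C_{N,T}(|x|_2^2+1)$. Substituting this back into the last display bounds $\delta\int_0^t\|X^{g}_{\varepsilon,\delta}(s)\|_{F_{1,2}}^2\,ds$ by $C_{N,T}(|x|_2^2+1)$ as well, and, after enlarging $C_{N,T}$, the claim follows. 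The main obstacle is the first step: justifying the $|X^{g}_{\varepsilon,\delta}(t)|_2^2$ It\^o formula, i.e.\ upgrading $X^{g}_{\varepsilon,\delta}$ from $L^2([0,T];L^2(\mu))$ to $L^2([0,T];F_{1,2})$. This is exactly where the parabolic regularization by $\delta(L-\varepsilon)$ is indispensable, and it rests on the Dirichlet-form facts $\Psi(u)\in D(\mathscr E)$ and $\mathscr E(\Psi(u),u)\ge0$, which are consequences of the Markov property of $(\mathscr E,D(\mathscr E))$ (see \cite{MR,Fukushima}).
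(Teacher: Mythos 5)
Your overall computation (energy identity, discarding $\mathscr{E}(\Psi(u),u)\ge 0$ and $\varepsilon\langle\Psi_\delta(u),u\rangle_2\ge 0$, bounding the control drift by $h_3$ via \textbf{(H2)(iii)} and \eref{h0}, then Gronwall) matches what the bound requires, and you correctly identify the real obstacle: the chain rule for $|X^{g}_{\varepsilon,\delta}(t)|_2^2$ is only legitimate once one knows $X^{g}_{\varepsilon,\delta}\in L^2([0,T];F_{1,2})$. The gap is in how you claim to obtain that regularity. You propose to re-solve \eref{eq:4} via \cite[Theorem 1.2]{BLZ} in the finer Gelfand triple $F_{1,2}\subset L^2(\mu)\subset F^*_{1,2}$, but the facts you invoke (the normal-contraction bound $\mathscr{E}(\Psi_\delta(u),\Psi_\delta(u))\le (k+\delta)^2\mathscr{E}(u,u)$ and $\mathscr{E}(\Psi(u),u)\ge 0$) only give growth and coercivity. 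The (local) monotonicity hypothesis of \cite{BLZ} in this triple would require an estimate of the form
$\mathscr{E}\big(\Psi(u)-\Psi(v),u-v\big)\ \ge\ -\,\delta\,\mathscr{E}(u-v,u-v)-\big(K+\rho(v)\big)|u-v|_2^2$,
and nothing in \textbf{(H1)} yields this: $\Psi(u)-\Psi(v)$ is not a normal contraction of $u-v$, so $\mathscr{E}(\Psi(u)-\Psi(v))$ is not controlled by $\mathscr{E}(u-v)$, and for a merely nondecreasing Lipschitz $\Psi$ with flat pieces (the Stefan nonlinearity) the quantity $\mathscr{E}(\Psi(u)-\Psi(v),u-v)$ has no sign (already for $L=\Delta$, $\int\nabla(\Psi(u)-\Psi(v))\cdot\nabla(u-v)\,dx$ can be made arbitrarily negative relative to $\delta\int|\nabla(u-v)|^2dx$). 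This lack of monotonicity in the $V=F_{1,2}$, $H=L^2(\mu)$ framework is precisely why porous-media/Stefan operators are treated with $H=F^*_{1,2}$, so the existence of a variational solution in the finer triple — the step on which your whole Itô-formula justification rests — is not established.

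The paper circumvents exactly this point without upgrading the regularity of $X^{g}_{\varepsilon,\delta}$ a priori: it applies the smoothing operator $(\alpha-L)^{-\frac12}$ (which maps $F^*_{1,2}$ into $L^2(\mu)$) to \eref{eq:4}, applies the chain rule to $|(\alpha-L)^{-\frac12}X^{g}_{\varepsilon,\delta}(t)|_2^2$ where it is legitimate, uses the $F^*_{1,2}$ a priori bound \eref{eq:4.2} to absorb the supremum term after multiplying by $\alpha$, and only then recovers $\sup_s|X^{g}_{\varepsilon,\delta}(s)|_2^2$ and $\delta\int_0^t\|X^{g}_{\varepsilon,\delta}(s)\|^2_{F_{1,2}}ds$ in the limit $\alpha\to\infty$ by lower semicontinuity (the $\liminf$ step borrowed from \cite{RWXSPA}), followed by Gronwall. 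If you want to keep your structure, you would need to either reproduce this $(\alpha-L)^{-\frac12}$/Fatou argument or supply an independent proof that $X^{g}_{\varepsilon,\delta}\in L^2([0,T];F_{1,2})$; as written, invoking \cite[Theorem 1.2]{BLZ} in the finer triple does not close the argument.
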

\begin{proof}
Rewrite \eqref{eq:4} to the following:
\begin{eqnarray*}
X^{g}_{\varepsilon,\delta}(t)=&&\!\!\!\!\!\!\!\!x+\int_0^t(L-\varepsilon)\big(\Psi(X^{g}_{\varepsilon,\delta}(s))+\delta X^{g}_{\varepsilon,\delta}(s)\big)ds\nonumber\\
&&\!\!\!\!\!\!\!\!+\int_0^t\int_{Z}f(s,X^{g}_{\varepsilon,\delta}(s),z)\big(g(s,z)-1\big)\nu(dz)ds,\ \forall~t\in[0,T].
\end{eqnarray*}
For $\alpha>\varepsilon$, applying the operator $(\alpha-L)^{-\frac{1}{2}}:F^*_{1,2}\rightarrow L^2(\mu)$ to both sides of the above equation, we get
\begin{eqnarray*}
(\alpha-L)^{-\frac{1}{2}}X^{g}_{\varepsilon,\delta}(t)=&&\!\!\!\!\!\!\!\!(\alpha-L)^{-\frac{1}{2}}x+\int_0^t(L-\varepsilon)(\alpha-L)^{-\frac{1}{2}}\big(\Psi(X^{g}_{\varepsilon,\delta}(s))+\delta X^{g}_{\varepsilon,\delta}(s)\big)ds\nonumber\\
&&\!\!\!\!\!\!\!\!+\int_0^t\int_{Z}(\alpha-L)^{-\frac{1}{2}}f(s,X^{g}_{\varepsilon,\delta}(s),z)\big(g(s,z)-1\big)\nu(dz)ds.
\end{eqnarray*}
Applying the chain rule in $L^2(\mu)$, we obtain that for $t\in[0,T]$,
\begin{eqnarray}\label{c1}
&&\!\!\!\!\!\!\!\!\big|(\alpha-L)^{-\frac{1}{2}}X^{g}_{\varepsilon,\delta}(t)\big|_2^2\nonumber\\
=&&\!\!\!\!\!\!\!\!|(\alpha-L)^{-\frac{1}{2}}x|_2^2+2\int_0^t~_{F^*_{1,2}}\Big\langle (L-\varepsilon)(\alpha-L)^{-\frac{1}{2}}\big(\Psi(X^{g}_{\varepsilon,\delta}(s))\big),(\alpha-L)^{-\frac{1}{2}}X^{g}_{\varepsilon,\delta}(s)\Big\rangle_{F_{1,2}}ds\nonumber\\
&&\!\!\!\!\!\!\!\!+2\delta\int_0^t~_{F^*_{1,2}}\Big\langle(L-\varepsilon)(\alpha-L)^{-\frac{1}{2}}X^{g}_{\varepsilon,\delta}(s),(\alpha-L)^{-\frac{1}{2}}X^{g}_{\varepsilon,\delta}(s)\Big\rangle_{F_{1,2}}ds\\
&&\!\!\!\!\!\!\!\!+2\int_0^t~_{F^*_{1,2}}\Big\langle\int_{Z}(\alpha-L)^{-\frac{1}{2}}f(s,X^{g}_{\varepsilon,\delta}(s),z)\big(g(s,z)-1\big)\nu(dz),(\alpha-L)^{-\frac{1}{2}}X^{g}_{\varepsilon,\delta}(s)\Big\rangle_{F_{1,2}}ds.\nonumber
\end{eqnarray}
From \cite[(3.19)]{RWX} and the last two lines on page 2140 of \cite{RWX},
we know that
\begin{eqnarray}\label{c2}
&&\!\!\!\!\!\!\!\!2\int_0^t~_{F^*_{1,2}}\Big\langle (L-\varepsilon)(\alpha-L)^{-\frac{1}{2}}\big(\Psi(X^{g}_{\varepsilon,\delta}(s))\big),(\alpha-L)^{-\frac{1}{2}}X^{g}_{\varepsilon,\delta}(s)\Big\rangle_{F_{1,2}}ds\leq0.
\end{eqnarray}

For the third term in the right hand-side of \eqref{c1}, from \eqref{triple1} and since $\|u\|_{F_{1,2}}\geq|u|_2,\ \forall u\in F_{1,2}$,
\begin{eqnarray}\label{c3}
&&\!\!\!\!\!\!\!\!2\delta\int_0^t~_{F^*_{1,2}}\Big\langle(L-\varepsilon)(\alpha-L)^{-\frac{1}{2}}X^{g}_{\varepsilon,\delta}(s),(\alpha-L)^{-\frac{1}{2}}X^{g}_{\varepsilon,\delta}(s)\Big\rangle_{F_{1,2}}ds\nonumber\\
=&&\!\!\!\!\!\!\!\!-2\delta\int_0^t~_{F^*_{1,2}}\Big\langle(1-L)(\alpha-L)^{-\frac{1}{2}}X^{g}_{\varepsilon,\delta}(s),(\alpha-L)^{-\frac{1}{2}}X^{g}_{\varepsilon,\delta}(s)\Big\rangle_{F_{1,2}}ds\nonumber\\
&&\!\!\!\!\!\!\!\!+(1-\varepsilon)2\delta\int_0^t~_{F^*_{1,2}}\Big\langle(\alpha-L)^{-\frac{1}{2}}X^{g}_{\varepsilon,\delta}(s),(\alpha-L)^{-\frac{1}{2}}X^{g}_{\varepsilon,\delta}(s)\Big\rangle_{F_{1,2}}ds\nonumber\\
=&&\!\!\!\!\!\!\!\!-2\delta\int_0^t~\Big\langle(\alpha-L)^{-\frac{1}{2}}X^{g}_{\varepsilon,\delta}(s),(\alpha-L)^{-\frac{1}{2}}X^{g}_{\varepsilon,\delta}(s)\Big\rangle_{F_{1,2}}ds\nonumber\\
&&\!\!\!\!\!\!\!\!+(1-\varepsilon)2\delta\int_0^t\Big\langle(\alpha-L)^{-\frac{1}{2}}X^{g}_{\varepsilon,\delta}(s),(\alpha-L)^{-\frac{1}{2}}X^{g}_{\varepsilon,\delta}(s)\Big\rangle_2ds\nonumber\\
\leq&&\!\!\!\!\!\!\!\!-2\delta\int_0^t\|(\alpha-L)^{-\frac{1}{2}}X^{g}_{\varepsilon,\delta}(s)\|^2_{F_{1,2}}ds+2\int_0^t|(\alpha-L)^{-\frac{1}{2}}X^{g}_{\varepsilon,\delta}(s)|_2^2ds.
\end{eqnarray}

For the fourth term in the right hand-side of \eqref{c1}, notice that $\int_{Z}(\alpha-L)^{-\frac{1}{2}}f(s,X^{g}_{\varepsilon,\delta}(s),z)\\\big(g(s,z)-1\big)\nu(dz)$ takes a value in the space $L^2(\mu)$, so by \eqref{triple1} we have
\begin{eqnarray}\label{c4}
&&\!\!\!\!\!\!\!\!_{F^*_{1,2}}\Big\langle\int_Z(\alpha-L)^{-\frac{1}{2}}f(s,X^{g}_{\varepsilon,\delta}(s),z)\big(g(s,z)-1\big)\nu(dz),(\alpha-L)^{-\frac{1}{2}}X^{g}_{\varepsilon,\delta}(s)\Big\rangle_{F_{1,2}}\nonumber\\
=&&\!\!\!\!\!\!\!\!\Big\langle\int_Z(\alpha-L)^{-\frac{1}{2}}f(s,X^{g}_{\varepsilon,\delta}(s),z)\big(g(s,z)-1\big)\nu(dz),(\alpha-L)^{-\frac{1}{2}}X^{g}_{\varepsilon,\delta}(s)\Big\rangle_2.
\end{eqnarray}

Multiplying both sides of \eqref{c1} by $\alpha$, and taking the above estimates into account, by \textbf{(H2)(iii)}, since $\sqrt{\alpha}(\alpha-L)^{-\frac{1}{2}}$ is a contraction on $L^2(\mu)$, \eqref{c1} yields that for all $t\in[0,T]$,
\begin{eqnarray*}\label{c5}
&&\!\!\!\!\!\!\!\!\Big|\sqrt{\alpha}(\alpha-L)^{-\frac{1}{2}}X^{g}_{\varepsilon,\delta}(t)\Big|_2^2+2\delta\int_0^t\|\sqrt{\alpha}(\alpha-L)^{-\frac{1}{2}}X^{g}_{\varepsilon,\delta}(s)\|^2_{F_{1,2}}ds\nonumber\\
\leq&&\!\!\!\!\!\!\!\!|\sqrt{\alpha}(\alpha-L)^{-\frac{1}{2}}x|_2^2\nonumber\\
&&\!\!\!\!\!\!\!\!+2\int_0^t\Big|\int_Z\sqrt{\alpha}(\alpha-L)^{-\frac{1}{2}}f(s,X^{g}_{\varepsilon,\delta}(s),z)\big(g(s,z)-1\big)\nu(dz)\Big|_2\cdot|\sqrt{\alpha}(\alpha-L)^{-\frac{1}{2}}X^{g}_{\varepsilon,\delta}(s)|_2ds\nonumber\\
&&\!\!\!\!\!\!\!\!+2\int_0^t|\sqrt{\alpha}(\alpha-L)^{-\frac{1}{2}}X^{g}_{\varepsilon,\delta}(s)|_2^2ds\nonumber\\
\leq&&\!\!\!\!\!\!\!\!|\sqrt{\alpha}(\alpha-L)^{-\frac{1}{2}}x|_2^2\nonumber\\
&&\!\!\!\!\!\!\!\!+2\int_0^t\!\!\big|\!\int_Z\!f(s,X^{g}_{\varepsilon,\delta}(s),z)\big(g(s,z)-1\big)\nu(dz)\big|_2\cdot|\sqrt{\alpha}(\alpha-L)^{-\frac{1}{2}}X^{g}_{\varepsilon,\delta}(s)|_2ds+2\int_0^t\!\!|X^{g}_{\varepsilon,\delta}(s)|_2^2ds\nonumber\\
\leq&&\!\!\!\!\!\!\!\!|\sqrt{\alpha}(\alpha-L)^{-\frac{1}{2}}x|_2^2\nonumber\\
&&\!\!\!\!\!\!\!\!+2\int_0^t\sqrt{h_3(s)}\big(|X^{g}_{\varepsilon,\delta}(s)|_2+1\big)\cdot \sqrt{h_3(s)}\cdot|\sqrt{\alpha}(\alpha-L)^{-\frac{1}{2}}X^{g}_{\varepsilon,\delta}(s)|_2ds+2\int_0^t|X^{g}_{\varepsilon,\delta}(s)|_2^2ds,
\end{eqnarray*}
here and in the sequel
$$h_3(s):=\int_Zl_3(s,z)\big|g(s,z)-1\big|\nu(dz).$$
From \eqref{h0}, we know that $\int_0^Th_3(s)ds\leq C_{l_3,N}$,
and by Young's inequality, we then get
\begin{eqnarray}\label{c6}
&&\!\!\!\!\!\!\!\!\sup_{s\in[0,t]}\Big|\sqrt{\alpha}(\alpha-L)^{-\frac{1}{2}}X^{g}_{\varepsilon,\delta}(s)\Big|_2^2+2\delta\int_0^t\|\sqrt{\alpha}(\alpha-L)^{-\frac{1}{2}}X^{g}_{\varepsilon,\delta}(s)\|^2_{F_{1,2}}ds\nonumber\\
\leq&&\!\!\!\!\!\!\!\!|\sqrt{\alpha}(\alpha-L)^{-\frac{1}{2}}x|_2^2+2C_{l_3,N}\int_0^th_3(s)\big(|X^{g}_{\varepsilon,\delta}(s)|_2+1\big)^2ds\nonumber\\
&&\!\!\!\!\!\!\!\!+\frac{1}{2C_{l_3,N}}\int_0^th_3(s)|\sqrt{\alpha}(\alpha-L)^{-\frac{1}{2}}X^{g}_{\varepsilon,\delta}(s)|^2_2ds+2\int_0^t|X^{g}_{\varepsilon,\delta}(s)|_2^2ds\nonumber\\
\leq&&\!\!\!\!\!\!\!\!|\sqrt{\alpha}(\alpha-L)^{-\frac{1}{2}}x|_2^2+4C_{l_3,N}\int_0^th_3(s)\big(|X^{g}_{\varepsilon,\delta}(s)|^2_2+1\big)ds+2\int_0^t|X^{g}_{\varepsilon,\delta}(s)|_2^2+1ds\nonumber\\
&&\!\!\!\!\!\!\!\!+\sup_{s\in[0,t]}\Big|\sqrt{\alpha}(\alpha-L)^{-\frac{1}{2}}X^{g}_{\varepsilon,\delta}(s)\Big|_2^2\cdot \frac{1}{2C_{l_3,N}}\int_0^th_3(s)ds\nonumber\\
\leq&&\!\!\!\!\!\!\!\!|\sqrt{\alpha}(\alpha-L)^{-\frac{1}{2}}x|_2^2+\int_0^t\big(4C_{l_3,N}h_3(s)+2\big)\big(|X^{g}_{\varepsilon,\delta}(s)|^2_2+1\big)ds\nonumber\\
&&\!\!\!\!\!\!\!\!+\frac{1}{2}\sup_{s\in[0,t]}\Big|\sqrt{\alpha}(\alpha-L)^{-\frac{1}{2}}X^{g}_{\varepsilon,\delta}(s)\Big|_2^2.
\end{eqnarray}

Since $|\sqrt{\alpha}(\alpha-L)^{-\frac{1}{2}}\cdot|_2$ is equivalent to $\|\cdot\|_{F^*_{1,2}}$,
the first term in the left hand-side of \eqref{c6} is finite by \eqref{eq:4.2}.
Then, \eqref{c6} yields that
\begin{eqnarray}\label{c7}
&&\!\!\!\!\!\!\!\!\sup_{s\in[0,t]}\Big|\sqrt{\alpha}(\alpha-L)^{-\frac{1}{2}}X^{g}_{\varepsilon,\delta}(s)\Big|_2^2+4\delta\int_0^t\|\sqrt{\alpha}(\alpha-L)^{-\frac{1}{2}}X^{g}_{\varepsilon,\delta}(s)\|^2_{F_{1,2}}ds\nonumber\\
\leq&&\!\!\!\!\!\!\!\!2|\sqrt{\alpha}(\alpha-L)^{-\frac{1}{2}}x|_2^2+\int_0^t(8C_{l_3,N}h_3(s)+4\big)\big(|X^{g}_{\varepsilon,\delta}(s)|^2_2+1\big)ds.
\end{eqnarray}

By exactly the same arguments as in \cite[Page:15, (3.32)]{RWXSPA}, we have
\begin{eqnarray*}
&&\!\!\!\!\!\!\!\!\sup_{s\in[0,t]}\big|X^{g}_{\varepsilon,\delta}(s)\big|_2^2+4\delta\int_0^t\|X^{g}_{\varepsilon,\delta}(s)\|^2_{F_{1,2}}ds\nonumber\\
\leq&&\!\!\!\!\!\!\!\!\liminf_{\alpha\rightarrow\infty}\Big[\sup_{s\in[0,t]}\Big|\sqrt{\alpha}(\alpha-L)^{-\frac{1}{2}}X^{g}_{\varepsilon,\delta}(s)\Big|_2^2+4\delta\int_0^t\|\sqrt{\alpha}(\alpha-L)^{-\frac{1}{2}}X^{g}_{\varepsilon,\delta}(s)\|^2_{F_{1,2}}ds\Big]\nonumber\\
\leq&&\!\!\!\!\!\!\!\!\liminf_{\alpha\rightarrow\infty}\Big[2|\sqrt{\alpha}(\alpha-L)^{-\frac{1}{2}}x|_2^2+\int_0^t(8C_{l_3,N}h_3(s)+4\big)\big(|X^{g}_{\varepsilon,\delta}(s)|^2_2+1\big)ds\Big]\nonumber\\
=&&\!\!\!\!\!\!\!\!2|x|_2^2+\int_0^t(8C_{l_3,N}h_3(s)+4\big)|X^{g}_{\varepsilon,\delta}(s)|^2_2ds+\int_0^t8C_{l_3,N}h_3(s)+4ds.
\end{eqnarray*}
Then by Gronwall's lemma, we know that for all $\varepsilon, \delta\in(0,1)$, $t\in[0,T]$,
\begin{eqnarray}
&&\!\!\!\!\!\!\!\!\sup_{s\in[0,t]}\big|X^{g}_{\varepsilon,\delta}(s)\big|_2^2+4\delta\int_0^t\|X^{g}_{\varepsilon,\delta}(s)\|^2_{F_{1,2}}ds\nonumber\\
\leq&&\!\!\!\!\!\!\!\!\Big(2|x|_2^2+8C_{l_3,N}\int_0^th_3(s)ds+4T\Big)\cdot e^{8C_{l_3,N}\int_0^th_3(s)ds+4T}\nonumber\\
\leq&&\!\!\!\!\!\!\!\!\Big(2|x|_2^2+8C_{l_3,N}^2+4T\Big)e^{8C_{l_3,N}^2+4T}\label{eq zhai 3}\\
:=&&\!\!\!\!\!\!\!\!C_{N,T}(|x|_2^2+1).\nonumber
\end{eqnarray}
\hspace{\fill}$\Box$
\end{proof}

\begin{claim}\label{claim2}
$\{X^{g}_{\varepsilon,\delta}\}_{\delta\in(0,1)}$ converges to $X^g_\varepsilon$ in $C([0,T];F^*_{1,2})$ as $\delta\rightarrow0$, and $X^g_\varepsilon\in L^2([0,T];L^2(\mu))$.
\end{claim}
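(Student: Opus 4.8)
The plan is to show that $\{X^g_{\varepsilon,\delta}\}_{\delta\in(0,1)}$ is Cauchy in $C([0,T];F^*_{1,2})$ as $\delta\to0$, identify the limit as a solution of \eref{eq:3}, and then upgrade the integrability to $L^2([0,T];L^2(\mu))$ using the uniform bound from Claim \ref{claim1}. First I would fix $\delta,\delta'\in(0,1)$ and write down the equation satisfied by the difference $Y:=X^g_{\varepsilon,\delta}-X^g_{\varepsilon,\delta'}$, which from \eref{eq:4.1} reads
\begin{eqnarray*}
Y(t)=(L-\varepsilon)\int_0^t\big(\Psi(X^g_{\varepsilon,\delta}(s))-\Psi(X^g_{\varepsilon,\delta'}(s))\big)ds+(L-\varepsilon)\int_0^t\big(\delta X^g_{\varepsilon,\delta}(s)-\delta' X^g_{\varepsilon,\delta'}(s)\big)ds+\int_0^t\int_Z\big(f(s,X^g_{\varepsilon,\delta}(s),z)-f(s,X^g_{\varepsilon,\delta'}(s),z)\big)(g(s,z)-1)\nu(dz)ds.
\end{eqnarray*}
Applying the $\|\cdot\|_{F^*_{1,2}}^2$ chain rule (as in the proof of Claim \ref{claim1}, via $(\alpha-L)^{-1/2}$ and passing $\alpha\to\infty$, or directly by the Gelfand triple $L^2(\mu)\subset F^*_{1,2}\subset(L^2(\mu))^*$), I would estimate each of the three contributions. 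The $\Psi$-term is handled by the monotonicity estimate already recorded before \eref{monoto1}, giving a bound by $\frac{(1-\varepsilon)^2}{\widetilde\alpha}\|Y(s)\|_{F^*_{1,2}}^2$; the $f$-term is bounded by $h_1(s)\|Y(s)\|_{F^*_{1,2}}^2$ using \textbf{(H2)(i)} exactly as in the local monotonicity step; and the new $\delta$-correction term I would split as $\delta\,Y(s)+(\delta-\delta')X^g_{\varepsilon,\delta'}(s)$, bounding $\langle(L-\varepsilon)\delta Y(s),Y(s)\rangle$ above by a constant times $\|Y(s)\|_{F^*_{1,2}}^2$ (indeed the leading $-\delta\|(\cdot)\|_{F_{1,2}}^2$ piece is $\le 0$), and controlling the genuinely small remainder by $|\delta-\delta'|$ times something uniformly bounded in $\delta'$. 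Here is exactly where Claim \ref{claim1} enters: it gives $\sup_{\delta'}\big(\sup_{s}|X^g_{\varepsilon,\delta'}(s)|_2^2+\delta'\int_0^T\|X^g_{\varepsilon,\delta'}(s)\|_{F_{1,2}}^2ds\big)\le C_{N,T}(|x|_2^2+1)$, so $\sqrt{\delta'}\|X^g_{\varepsilon,\delta'}\|_{L^2([0,T];F_{1,2})}$ is bounded uniformly, and the cross term involving $(\delta-\delta')(L-\varepsilon)X^g_{\varepsilon,\delta'}$ tested against $Y$ can be bounded, after pairing in $L^2(\mu)$, by $C|\delta-\delta'|\cdot\|X^g_{\varepsilon,\delta'}(s)\|_{F_{1,2}}\cdot\|Y(s)\|_{F_{1,2}}$, which is not directly useful since $Y$ need not be in $F_{1,2}$; so I would instead keep the $(L-\varepsilon)\delta X^g_{\varepsilon,\delta}$ and $(L-\varepsilon)\delta' X^g_{\varepsilon,\delta'}$ pieces separate, pair each against $Y(s)$ via $\langle(L-\varepsilon)\delta X^g_{\varepsilon,\delta}(s),Y(s)\rangle_2$ (both summands lie in $L^2(\mu)$), and absorb using $\delta\|X^g_{\varepsilon,\delta}(s)\|_{F_{1,2}}\cdot|Y(s)|_2$-type bounds together with $\int_0^T\delta\|X^g_{\varepsilon,\delta}(s)\|_{F_{1,2}}^2ds\le C_{N,T}(|x|_2^2+1)$, so that this whole contribution is $O(\delta+\delta')$ after a Cauchy--Schwarz in time.

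After collecting terms and applying Gronwall's lemma, I would obtain
\begin{eqnarray*}
\sup_{t\in[0,T]}\|X^g_{\varepsilon,\delta}(t)-X^g_{\varepsilon,\delta'}(t)\|_{F^*_{1,2}}^2\le C_{\varepsilon,N,T}(|x|_2^2+1)\,\omega(\delta,\delta'),
\end{eqnarray*}
where $\omega(\delta,\delta')\to0$ as $\delta,\delta'\to0$ (a power of $\delta+\delta'$). Hence $\{X^g_{\varepsilon,\delta}\}$ is Cauchy in the Banach space $C([0,T];F^*_{1,2})$ and converges to some $X^g_\varepsilon\in C([0,T];F^*_{1,2})$. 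To identify the limit, I would pass to the limit in \eref{eq:4.1}: the $\delta X^g_{\varepsilon,\delta}$ term tends to $0$ in $C([0,T];F^*_{1,2})$ since $\|(L-\varepsilon)\int_0^t\delta X^g_{\varepsilon,\delta}(s)ds\|_{F^*_{1,2}}\le C\delta\int_0^T(\|X^g_{\varepsilon,\delta}(s)\|_{F_{1,2}}+|X^g_{\varepsilon,\delta}(s)|_2)ds\le C(\sqrt{\delta}\sqrt{\delta\|X^g_{\varepsilon,\delta}\|_{L^2F_{1,2}}^2}\,\sqrt T+\delta T\sqrt{C_{N,T}(|x|_2^2+1)})\to0$; the $f$-term converges by \textbf{(H2)(i)} and the uniform convergence of $X^g_{\varepsilon,\delta}\to X^g_\varepsilon$ together with $h_1\in L^1$; and the $\Psi$-term converges because $\Psi$ is Lipschitz and, using the weak lower semicontinuity / Fatou argument exactly as in the passage from \eref{c7} to \eref{eq zhai 3} (or the reference \cite[Page:15]{RWXSPA}), one shows $\Psi(X^g_{\varepsilon,\delta})\to\Psi(X^g_\varepsilon)$ in the appropriate sense so that $(L-\varepsilon)\int_0^\cdot\Psi(X^g_{\varepsilon,\delta}(s))ds\to(L-\varepsilon)\int_0^\cdot\Psi(X^g_\varepsilon(s))ds$ in $C([0,T];F^*_{1,2})$. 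This yields that $X^g_\varepsilon$ satisfies \eref{p2}.

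Finally, for the regularity $X^g_\varepsilon\in L^2([0,T];L^2(\mu))$, I would use the uniform-in-$\delta$ bound $\sup_{s\in[0,T]}|X^g_{\varepsilon,\delta}(s)|_2^2\le C_{N,T}(|x|_2^2+1)$ from Claim \ref{claim1}: this gives a uniform bound in $L^2([0,T];L^2(\mu))$ (in fact in $L^\infty([0,T];L^2(\mu))$), hence along a subsequence $X^g_{\varepsilon,\delta_k}\rightharpoonup \xi$ weakly in $L^2([0,T];L^2(\mu))$; since the same sequence converges to $X^g_\varepsilon$ strongly in $C([0,T];F^*_{1,2})$ and $L^2(\mu)\hookrightarrow F^*_{1,2}$ continuously, the two limits agree, so $X^g_\varepsilon=\xi\in L^2([0,T];L^2(\mu))$ and by weak lower semicontinuity $\|X^g_\varepsilon\|_{L^2([0,T];L^2(\mu))}^2\le T\,C_{N,T}(|x|_2^2+1)$. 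The main obstacle I anticipate is not the Gronwall bookkeeping but the careful treatment of the $\delta$-correction term: one must not pair $(L-\varepsilon)X^g_{\varepsilon,\delta}$ against $Y$ in a way that demands $Y\in F_{1,2}$, and one must extract the smallness purely from the factor $\delta$ (resp. $\delta'$) while exploiting that $\int_0^T\delta\|X^g_{\varepsilon,\delta}(s)\|_{F_{1,2}}^2ds$ is bounded — this is the delicate point, and it is also why Claim \ref{claim1} was proved with the $4\delta\int_0^t\|X^g_{\varepsilon,\delta}(s)\|_{F_{1,2}}^2ds$ term kept explicitly on the left-hand side.
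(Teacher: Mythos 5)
Your proposal is correct and follows the same overall skeleton as the paper's proof: apply the chain rule to the squared dual norm of the difference $X^g_{\varepsilon,\delta}-X^g_{\varepsilon,\delta'}$, control the $\Psi$- and $f$-contributions by the monotonicity/Lipschitz estimates, treat the $\delta,\delta'$-correction as a small remainder via Claim \ref{claim1}, conclude Cauchyness in $C([0,T];F^*_{1,2})$ by Gronwall, and get $X^g_\varepsilon\in L^2([0,T];L^2(\mu))$ from the uniform $L^\infty([0,T];L^2(\mu))$ bound. The differences are in the bookkeeping, and the paper's choices make the delicate point you worry about disappear: it works with the equivalent norm $\|\cdot\|_{F^*_{1,2,\varepsilon}}$ (Riesz map $(\varepsilon-L)^{-1}$), so the entire drift $(L-\varepsilon)\big(\Psi(X^g_{\varepsilon,\delta})-\Psi(X^g_{\varepsilon,\delta'})+\delta X^g_{\varepsilon,\delta}-\delta' X^g_{\varepsilon,\delta'}\big)$ pairs against the difference simply as $-\langle\cdot,\cdot\rangle_2$ (see \eref{c21}); the $\Psi$-part then appears on the left with a good sign, the $f$-part is handled by \eref{f} at cost $\varepsilon^{-1/2}h_1(s)$, and the $\delta$-correction is bounded crudely in $L^2(\mu)$ by $4(\delta+\delta')\int_0^t\big(|X^g_{\varepsilon,\delta}(s)|_2^2+|X^g_{\varepsilon,\delta'}(s)|_2^2\big)ds$, so only the $\sup_t|X^g_{\varepsilon,\delta}(t)|_2^2$ part of Claim \ref{claim1} is used and the Cauchy increment is of order $\delta+\delta'$. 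Your route (plain $\|\cdot\|_{F^*_{1,2}}$ with the local-monotonicity bound $\frac{(1-\varepsilon)^2}{\widetilde{\alpha}}\|\cdot\|^2_{F^*_{1,2}}$, and smallness of the $\delta$-term extracted from $\delta\int_0^T\|X^g_{\varepsilon,\delta}(s)\|^2_{F_{1,2}}ds\leq C$) also works and yields order $\sqrt{\delta}+\sqrt{\delta'}$, which suffices; but two remarks: (i) your parenthetical that one may pair $(L-\varepsilon)\delta X^g_{\varepsilon,\delta}(s)$ against the difference in $\langle\cdot,\cdot\rangle_2$ ``since both summands lie in $L^2(\mu)$'' is loose, as $(L-\varepsilon)X^g_{\varepsilon,\delta}(s)\notin L^2(\mu)$ in general; the pairing must be read through \eref{triple2} (write $L-\varepsilon=-(1-L)+(1-\varepsilon)$), after which only $|X^g_{\varepsilon,\delta}|_2$-bounds are needed — so, contrary to your closing remark, the $F_{1,2}$-energy term kept in Claim \ref{claim1} is not what the paper uses at this step; and (ii) the identification of the limit as a solution of \eref{eq:3} is the content of Claim \ref{claim3}, not of this claim, and there your ``Lipschitz implies $\Psi(X^g_{\varepsilon,\delta})\to\Psi(X^g_\varepsilon)$'' step is too quick, since the convergence of $X^g_{\varepsilon,\delta}$ holds only in $F^*_{1,2}$; the paper instead passes to the limit through weak convergence of $\Psi(X^g_{\varepsilon,\delta})+\delta X^g_{\varepsilon,\delta}$ in $L^2([0,T];L^2(\mu))$ as in \cite[Claim 4.3]{WZ21}.
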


\begin{proof}
By the chain rule, we get that for all $\delta, \delta'\in(0,1)$, and $t\in [0,T]$,
\begin{eqnarray}\label{c21}
&&\!\!\!\!\!\!\!\!\|X^g_{\varepsilon,\delta}(t)-X^g_{\varepsilon,\delta'}(t)\|^2_{F^*_{1,2,\varepsilon}}\nonumber\\
&&\!\!\!\!\!\!\!\!+2\int_0^t\big\langle\Psi(X^g_{\varepsilon,\delta}(s))-\Psi(X^g_{\varepsilon,\delta'}(s))+\delta X^g_{\varepsilon,\delta}(s)-\delta'X^g_{\varepsilon,\delta'}(s),X^g_{\varepsilon,\delta}(s)-X^g_{\varepsilon,\delta'}(s)\big\rangle_2ds\nonumber\\
=&&\!\!\!\!\!\!\!\!2\int_0^t\big\langle X^g_{\varepsilon,\delta}(s)-X^g_{\varepsilon,\delta'}(s),\nonumber\\
&&\!\!\!\!\!\!\!\!~~~~~~~~~~~~~~~\int_Z\big(f(s,X^g_{\varepsilon,\delta}(s),z)-f(s,X^g_{\varepsilon,\delta'}(s),z)\big)\cdot(g(s,z)-1)\nu(dz)\big\rangle_{F^*_{1,2,\varepsilon}} ds.
\end{eqnarray}
By \cite[(4.25)]{WZ}, we know that the second term on the left-hand side of \eqref{c21} can be estimated as follows.
\begin{eqnarray}\label{c22}
&&\!\!\!\!\!\!\!\!2\int_0^t\Psi(X^g_{\varepsilon,\delta}(s))-\Psi(X^g_{\varepsilon,\delta'}(s))+\delta X^g_{\varepsilon,\delta}(s)-\delta'X^g_{\varepsilon,\delta'}(s),X^g_{\varepsilon,\delta}(s)-X^g_{\varepsilon,\delta'}(s)\big\rangle_2ds\nonumber\\
\geq&&\!\!\!\!\!\!\!\!2\tilde{\alpha}\int_0^t|\Psi(X^g_{\varepsilon,\delta}(s))-\Psi(X^g_{\varepsilon,\delta'}(s))|_2^2ds\nonumber\\
&&\!\!\!\!\!\!\!\!+2\int_0^t\big\langle \delta X^g_{\varepsilon,\delta}(s)-\delta'X^g_{\varepsilon,\delta'}(s),X^g_{\varepsilon,\delta}(s)-X^g_{\varepsilon,\delta'}(s)\big\rangle_2ds.
\end{eqnarray}
For the right hand-side of \eqref{c21}, by \eqref{f}, we have
\begin{eqnarray}\label{c23}
&&\!\!\!\!\!\!\!\!2\!\!\int_0^t\!\!\!\big\langle X^g_{\varepsilon,\delta}(s)\!\!-\!\!X^g_{\varepsilon,\delta'}(s),\int_Z\!\!\!\big(f(s,X^g_{\varepsilon,\delta}(s),z)\!-\!f(s,X^g_{\varepsilon,\delta'}(s),z)\big)\cdot(g(s,z)\!-\!1)\nu(dz)\big\rangle_{F^*_{1,2,\varepsilon}} \!\!\!\!ds\nonumber\\
\leq&&\!\!\!\!\!\!\!\!2\int_0^t\|X^g_{\varepsilon,\delta}(s)-X^g_{\varepsilon,\delta'}(s)\|_{F^*_{1,2,\varepsilon}}\cdot\nonumber\\
&&\!\!\!\!\!\!\!\!~~~~~~~~~~~~~~\big\|\int_Z\big(f(s,X^g_{\varepsilon,\delta}(s),z)-f(s,X^g_{\varepsilon,\delta'}(s),z)\big)\cdot(g(s,z)-1)\nu(dz)\big\|_{F^*_{1,2,\varepsilon}}ds\nonumber\\
\leq&&\!\!\!\!\!\!\!\!2\int_0^t\|X^g_{\varepsilon,\delta}(s)-X^g_{\varepsilon,\delta'}(s)\|_{F^*_{1,2,\varepsilon}}\cdot\nonumber\\
&&\!\!\!\!\!\!\!\!~~~~~~~~~~~~~~\int_Z\big\|f(s,X^g_{\varepsilon,\delta}(s),z)-f(s,X^g_{\varepsilon,\delta'}(s),z)\big\|_{F^*_{1,2,\varepsilon}}\cdot|g(s,z)-1|\nu(dz)ds\nonumber\\
\leq&&\!\!\!\!\!\!\!\!2\int_0^t\|X^g_{\varepsilon,\delta}(s)-X^g_{\varepsilon,\delta'}(s)\|_{F^*_{1,2,\varepsilon}}\cdot\int_Z\frac{l_1(s,z)}{\sqrt{\varepsilon}}\big\|X^g_{\varepsilon,\delta}(s)-X^g_{\varepsilon,\delta}(s)\big\|_{F^*_{1,2,\varepsilon}}\cdot|g(s,z)-1|\nu(dz)ds\nonumber\\
=&&\!\!\!\!\!\!\!\!\frac{2}{\sqrt{\varepsilon}}\int_0^t\|X^g_{\varepsilon,\delta}(s)-X^g_{\varepsilon,\delta'}(s)\|^2_{F^*_{1,2,\varepsilon}}\cdot h_1(s)ds.
\end{eqnarray}
Plugging \eqref{c22} and \eqref{c23} into \eqref{c21}, we get
\begin{eqnarray*}
&&\!\!\!\!\!\!\!\!\sup_{s\in[0,t]}\|X^g_{\varepsilon,\delta}(s)-X^g_{\varepsilon,\delta'}(s)\|^2_{F^*_{1,2,\varepsilon}}+2\tilde{\alpha}\int_0^t|\Psi(X^g_{\varepsilon,\delta}(s))-\Psi(X^g_{\varepsilon,\delta'}(s))|_2^2ds\nonumber\\
\leq&&\!\!\!\!\!\!\!\!\frac{2}{\sqrt{\varepsilon}}\int_0^th_1(s)\|X^g_{\varepsilon,\delta}(s)-X^g_{\varepsilon,\delta'}(s)\|^2_{F^*_{1,2,\varepsilon}}ds+4(\delta+\delta')\int_0^t|X^g_{\varepsilon,\delta}(s)|_2^2+|X^g_{\varepsilon,\delta'}(s)|_2^2ds.
\end{eqnarray*}
By Gronwall's inequality and Claim \ref{claim1}(see (\ref{eq zhai 3})), we know that there exists some constant $C_{l_1,l_2,T,N,\varepsilon}\in(0,\infty)$,
which depends on $l_1, l_2, T, N, \varepsilon$,
but is independent of $\delta$ and $\delta'$,
such that
\begin{eqnarray*}
&&\!\!\!\!\!\!\!\!\sup_{s\in[0,t]}\|X^g_{\varepsilon,\delta}(s)-X^g_{\varepsilon,\delta'}(s)\|^2_{F^*_{1,2,\varepsilon}}+2\tilde{\alpha}\int_0^t|\Psi(X^g_{\varepsilon,\delta}(s))-\Psi(X^g_{\varepsilon,\delta'}(s))|_2^2ds\nonumber\\
\leq&&\!\!\!\!\!\!\!\!e^{\frac{2}{\sqrt{\varepsilon}}\int_0^th_1(s)ds}\cdot 4(\delta+\delta')\int_0^t|X^g_{\varepsilon,\delta}(s)|_2^2+|X^g_{\varepsilon,\delta'}(s)|_2^2 ds\nonumber\\
\leq&&\!\!\!\!\!\!\!\!e^{\frac{2}{\sqrt{\varepsilon}}C_{l_1,N}}\cdot4(\delta+\delta')\cdot 2T\big(2|x|_2^2+8C_{l_3,N}^2+4T\big)e^{8C_{l_3,N}^2+4T}\nonumber\\
:=&&\!\!\!\!\!\!\!\!C_{l_1,l_3,T,N,\varepsilon}(\delta+\delta')(|x|_2^2+1).
\end{eqnarray*}
Therefore, there exists an element $X^g_\varepsilon\in C([0,T];F^*_{1,2})$ such that $\{X^g_{\varepsilon,\delta}\}_{\delta\in(0,1)}$ converges to $X^g_\varepsilon$ in $C([0,T];F^*_{1,2})$ as $\delta\rightarrow0$. In addition, by Claim \ref{claim1}, we know that $X^g_\varepsilon\in L^2([0,T];L^2(\mu))$.\hspace{\fill}$\Box$
\end{proof}

\begin{claim}\label{claim3}
$X^g_\varepsilon$ satisfies \eqref{eq:3}.
\end{claim}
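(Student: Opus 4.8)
The goal is to pass to the limit $\delta \to 0$ in the integral equation \eref{eq:4.1} satisfied by $X^g_{\varepsilon,\delta}$ and recover \eref{p2} for the limit $X^g_\varepsilon$ obtained in Claim \ref{claim2}. The plan is to treat the four terms of \eref{eq:4.1} separately: the initial datum $x$ is independent of $\delta$; the term $(L-\varepsilon)\int_0^t\Psi(X^g_{\varepsilon,\delta}(s))\,ds$ should converge to $(L-\varepsilon)\int_0^t\Psi(X^g_\varepsilon(s))\,ds$; the correction $(L-\varepsilon)\int_0^t \delta X^g_{\varepsilon,\delta}(s)\,ds$ should vanish; and the noise-type drift $\int_0^t\int_Z f(s,X^g_{\varepsilon,\delta}(s),z)(g(s,z)-1)\nu(dz)\,ds$ should converge to the corresponding term with $X^g_\varepsilon$. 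All convergences will be taken in $F^*_{1,2}$, uniformly in $t\in[0,T]$ if possible, which is stronger than needed since \eref{p2} is a pointwise-in-$t$ identity in $F^*_{1,2}$.

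\medskip
First I would handle the vanishing term: by Claim \ref{claim1}, $\int_0^T\|X^g_{\varepsilon,\delta}(s)\|^2_{F_{1,2}}\,ds \le \delta^{-1}C_{N,T}(|x|_2^2+1)$, so $\delta\int_0^t X^g_{\varepsilon,\delta}(s)\,ds$ has $F_{1,2}$-norm bounded by $\delta\,T^{1/2}\big(\int_0^T\|X^g_{\varepsilon,\delta}(s)\|_{F_{1,2}}^2ds\big)^{1/2}\le C\,\delta^{1/2}\to 0$; since $L-\varepsilon:F_{1,2}\to F^*_{1,2}$ is bounded, the whole term tends to $0$ in $F^*_{1,2}$. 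For the $\Psi$-term, note that $\Psi$ is Lipschitz with constant $k$, so $|\Psi(X^g_{\varepsilon,\delta}(s))-\Psi(X^g_\varepsilon(s))|_2 \le k\,|X^g_{\varepsilon,\delta}(s)-X^g_\varepsilon(s)|_2$; unfortunately Claim \ref{claim2} only gives convergence in $C([0,T];F^*_{1,2})$, not in $L^2([0,T];L^2(\mu))$. Here I would instead use that, from the estimate established at the end of the proof of Claim \ref{claim2}, one has $\int_0^t|\Psi(X^g_{\varepsilon,\delta}(s))-\Psi(X^g_{\varepsilon,\delta'}(s))|_2^2\,ds \le C(\delta+\delta')(|x|_2^2+1)$, so $\{\Psi(X^g_{\varepsilon,\delta})\}_\delta$ is Cauchy in $L^2([0,T];L^2(\mu))$; call its limit $\Xi$. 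One identifies $\Xi=\Psi(X^g_\varepsilon)$ a.e. by extracting a subsequence along which $X^g_{\varepsilon,\delta}(s)\to X^g_\varepsilon(s)$ in $L^2(\mu)$ for a.e.\ $s$ (possible since $L^2$-convergence of $\Psi(X^g_{\varepsilon,\delta})$ together with $F^*_{1,2}$-convergence of $X^g_{\varepsilon,\delta}$ and the estimate $|u|_2^2 \le C|\Psi(u)|_2\,|u|_2 + \ldots$ coming from the Gelfand-triple structure, or more directly from continuity of $\Psi^{-1}$-type arguments already used in \cite{RWX}), and then $\Psi(X^g_{\varepsilon,\delta}(s))\to\Psi(X^g_\varepsilon(s))$ in $L^2(\mu)$ for a.e.\ $s$. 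Since $L-\varepsilon$ maps $L^2(\mu)$ boundedly into $(L^2(\mu))^*$ (Lemma 2.1), hence $F^*_{1,2}$ into $\dots$, we get $(L-\varepsilon)\int_0^t\Psi(X^g_{\varepsilon,\delta}(s))\,ds \to (L-\varepsilon)\int_0^t\Psi(X^g_\varepsilon(s))\,ds$ in the appropriate space, and moreover \eref{defi2}-type regularity of $\int_0^\cdot\Psi(X^g_\varepsilon(s))\,ds$ follows.

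\medskip
For the drift term I would write the difference as $\int_0^t\int_Z \big(f(s,X^g_{\varepsilon,\delta}(s),z)-f(s,X^g_\varepsilon(s),z)\big)(g(s,z)-1)\,\nu(dz)\,ds$ and bound its $F^*_{1,2,\varepsilon}$-norm, using \eref{f} from Remark \ref{equivalent}, by $\varepsilon^{-1/2}\int_0^t h_1(s)\,\|X^g_{\varepsilon,\delta}(s)-X^g_\varepsilon(s)\|_{F^*_{1,2,\varepsilon}}\,ds$, where $h_1\in L^1([0,T])$ by \eref{h0}. Since $X^g_{\varepsilon,\delta}\to X^g_\varepsilon$ in $C([0,T];F^*_{1,2})$ and the norms $\|\cdot\|_{F^*_{1,2,\varepsilon}}$ and $\|\cdot\|_{F^*_{1,2}}$ are equivalent (cf.\ \eref{equivalent1}), this bound tends to $0$. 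Collecting the four limits and using that $X^g_{\varepsilon,\delta}(t)\to X^g_\varepsilon(t)$ in $F^*_{1,2}$ (from Claim \ref{claim2}), equation \eref{eq:4.1} passes to the limit and yields \eref{p2} for $X^g_\varepsilon$; together with Claim \ref{claim2} this gives \eref{p1} as well, so $X^g_\varepsilon$ solves \eref{eq:3}. The main obstacle is the identification of the limit of $\Psi(X^g_{\varepsilon,\delta})$ as $\Psi(X^g_\varepsilon)$: the only convergence provided directly is in $C([0,T];F^*_{1,2})$, which is too weak to apply the (continuous but not $F^*_{1,2}$-continuous) map $\Psi$, so one must upgrade to a.e.-in-time $L^2(\mu)$-convergence along a subsequence, exploiting the uniform $L^2$-bound from Claim \ref{claim1} and the Cauchy estimate for $\Psi(X^g_{\varepsilon,\delta})$ in $L^2([0,T];L^2(\mu))$; this is exactly the kind of delicate interpolation argument carried out in \cite[Lemma 3.1]{RWX} and I would follow that template.
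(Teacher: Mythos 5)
Your decomposition of \eref{eq:4.1} and the treatment of the drift term and of the vanishing term $\delta\int_0^\cdot X^{g}_{\varepsilon,\delta}\,ds$ are fine, and your observation that the proof of Claim \ref{claim2} already gives $2\tilde\alpha\int_0^T|\Psi(X^g_{\varepsilon,\delta}(s))-\Psi(X^g_{\varepsilon,\delta'}(s))|_2^2\,ds\leq C(\delta+\delta')(|x|_2^2+1)$, hence strong Cauchyness of $\{\Psi(X^g_{\varepsilon,\delta})\}_\delta$ in $L^2([0,T];L^2(\mu))$, is correct and even stronger than what the paper uses (the paper only extracts weak $L^2([0,T];L^2(\mu))$-convergence of $\Psi(X^g_{\varepsilon,\delta})+\delta X^g_{\varepsilon,\delta}$, after first rewriting \eref{eq:4.1} to see that $(L-\varepsilon)\int_0^\cdot(\Psi(X^g_{\varepsilon,\delta})+\delta X^g_{\varepsilon,\delta})\,ds$ converges in $C([0,T];F^*_{1,2})$, and then identifies the limit by the argument of \cite[Claim 4.3]{WZ21}).

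The genuine gap is in your identification of the limit $\Xi$ with $\Psi(X^g_\varepsilon)$. You propose to extract a subsequence along which $X^g_{\varepsilon,\delta}(s)\to X^g_\varepsilon(s)$ strongly in $L^2(\mu)$ for a.e.\ $s$, invoking a ``$\Psi^{-1}$-type'' or interpolation argument. This step fails under \textbf{(H1)}: $\Psi$ is only nondecreasing and Lipschitz, so it may be constant on intervals (the Stefan example has $\Psi\equiv0$ on $[0,\rho]$), hence $X$ cannot be recovered from $\Psi(X)$, and the Cauchy estimate above controls differences of $X^g_{\varepsilon,\delta}$ only in $F^*_{1,2,\varepsilon}$, not in $L^2(\mu)$. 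Moreover, no compact embedding is assumed in this paper (that is its main point), so the uniform $L^2(\mu)$-bound of Claim \ref{claim1} plus convergence in $C([0,T];F^*_{1,2})$ only yields \emph{weak} $L^2$-convergence of $X^g_{\varepsilon,\delta}$, never strong a.e.-in-time convergence. The correct way to exploit your strong convergence of $\Psi(X^g_{\varepsilon,\delta})$ is a Minty-type monotonicity argument: pass to the limit in $\int_0^T\langle\Psi(X^g_{\varepsilon,\delta})-\Psi(\phi),X^g_{\varepsilon,\delta}-\phi\rangle_2\,ds\geq0$ (strong times weak convergence), then take $\phi=X^g_\varepsilon-\lambda w$ and let $\lambda\downarrow0$ using the hemicontinuity of the Nemytskii map of $\Psi$; this is essentially the mechanism behind the cited \cite[Claim 4.3]{WZ21} and \cite{RWX} arguments that the paper invokes, and without it (or an equivalent monotonicity argument) your proof of the identification step does not go through.
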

\begin{proof}
From Claim \ref{claim2}, we know that
\begin{eqnarray}\label{c32}
X^g_{\varepsilon,\delta}\rightarrow X^g_\varepsilon,~~\text{in}~~ C([0,T];F^*_{1,2})~~\text{as}~~\delta\rightarrow0.
\end{eqnarray}
By \textbf{(H2)(i)}, H\"{o}lder's inequality and \eqref{c32}, we have
\begin{eqnarray}\label{c31}
&&\hspace{-1.3truecm}\sup_{t\in[0,T]}\Big\|\int_0^t\int_Zf(s,X^g_{\varepsilon,\delta}(s),z)(g(s,z)-1)\nu(dz)ds-\int_0^t\int_Zf(s,X^g_{\varepsilon}(s),z)(g(s,z)-1)\nu(dz)ds\Big\|_{F^*_{1,2}}\nonumber\\
\leq&&\!\!\!\!\!\!\!\!\int_0^T\int_Z\big\|f(s,X^g_{\varepsilon,\delta}(s),z)-f(s,X^g_{\varepsilon}(s),z)\big\|_{F^*_{1,2}}|g(s,z)-1|\nu(dz)ds\nonumber\\
\leq&&\!\!\!\!\!\!\!\!\int_0^T\int_Zl_1(s,z)|g(s,z)-1|\cdot\|X^g_{\varepsilon,\delta}(s)-X^g_{\varepsilon}(s)\|_{F^*_{1,2}}\nu(dz)ds\nonumber\\
=&&\!\!\!\!\!\!\!\!\int_0^Th_1(s)\|X^g_{\varepsilon,\delta}(s)-X^g_{\varepsilon}(s)\|_{F^*_{1,2}}ds\nonumber\\
\leq&&\!\!\!\!\!\!\!\!\sup_{s\in[0,T]}\|X^g_{\varepsilon,\delta}(s)-X^g_{\varepsilon}(s)\|_{F^*_{1,2}}\cdot\int_0^Th_1(s)ds\nonumber\\
\leq&&\!\!\!\!\!\!\!\!C_{l_1,N}\Big(\sup_{s\in[0,T]}\|X^g_{\varepsilon,\delta}(s)-X^g_{\varepsilon}(s)\|^2_{F^*_{1,2}}\Big)^{\frac{1}{2}}\nonumber\\
&&\!\!\!\!\!\!\!\!\longrightarrow 0,~~\text{as}~~\delta\longrightarrow0,
\end{eqnarray}
which means
\begin{eqnarray*}
&&\!\!\!\!\!\!\!\!\int_0^\cdot\int_Zf(s,X^g_{\varepsilon,\delta}(s),z)(g(s,z)-1)\nu(dz)\nonumber\\
&&\!\!\!\!\!\!\!\!\longrightarrow\int_0^\cdot\int_Zf(s,X^g_{\varepsilon}(s),z)(g(s,z)-1)\nu(dz),~~\text{as}~~\delta\longrightarrow0,~~\text{in}~~C([0,T];F^*_{1,2}).
\end{eqnarray*}
We can rewrite \eqref{eq:4} as the following:
\begin{eqnarray}\label{c33}
&&\!\!\!\!\!\!\!\!(L-\varepsilon)\int_0^\cdot\big(\Psi(X^{g}_{\varepsilon,\delta}(s))+\delta X^{g}_{\varepsilon,\delta}(s)\big)ds\nonumber\\
=&&\!\!\!\!\!\!\!\!-x+X^{g}_{\varepsilon,\delta}(\cdot)-\int_0^\cdot\int_{Z}f(s,X^{g}_{\varepsilon,\delta}(s),z)\big(g(s,z)-1\big)\nu(dz)ds,\ \text{in}\ [0,T].
\end{eqnarray}
Then, by \eqref{c32}-\eqref{c33}, we know that as $\delta\rightarrow0$,
$$\int_0^\cdot\big(\Psi(X^{g}_{\varepsilon,\delta}(s))+\delta X^{g}_{\varepsilon,\delta}(s)\big)ds$$
converges to some element in $C([0,T];F_{1,2})$.
Using the similar arguments as in \cite[Claim 4.3]{WZ21},
we know that as $\delta\rightarrow0$,
\begin{eqnarray*}
&&\!\!\!\!\!\!\!\!\Psi(X^g_{\varepsilon,\delta}(\cdot))+\delta X^g_{\varepsilon,\delta}(\cdot)\longrightarrow\Psi(X^g_{\varepsilon}(\cdot))~\text{weakly~in}~~L^2([0,T];L^2(\mu)),
\end{eqnarray*}
and furthermore, $\int_0^\cdot\Psi(X^g_\varepsilon(s))ds\in C([0,T];F_{1,2})$.
This implies Claim \ref{claim3}.\hspace{\fill}$\Box$
\end{proof}

By lower semicontinuity of the norm and Claim \ref{claim1}, we also know that \eqref{p3} holds, i.e.,
\begin{eqnarray}\label{est2}
\sup_{g\in S^N}\sup_{t\in[0,T]}|X^{g}_{\varepsilon}(t)|_2^2\leq C_{N,T}(|x|_2^2+1).
\end{eqnarray}

\vspace{2mm}
\textbf{Uniqueness}
\vspace{2mm}

Assume $X^g_{\varepsilon,1}$ and $X^g_{\varepsilon,2}$ are two solutions to \eqref{eq:3}. Applying the chain rule to $\|X^g_{\varepsilon,1}-X^g_{\varepsilon,2}\|^2_{F^*_{1,2,\varepsilon}}$,
\begin{eqnarray}\label{c34}
&&\!\!\!\!\!\!\!\!\|X^g_{\varepsilon,1}(t)-X^g_{\varepsilon,2}(t)\|^2_{F^*_{1,2,\varepsilon}}\nonumber\\
&&\!\!\!\!\!\!\!\!+2\int_0^t\big\langle \Psi(X^g_{\varepsilon,1}(s))-\Psi(X^g_{\varepsilon,2}(s)),X^g_{\varepsilon,1}(s)-X^g_{\varepsilon,2}(s)\big\rangle_2ds\nonumber\\
=&&\!\!\!\!\!\!\!\!2\int_0^t\big\langle \int_Z\big(f(s,X^g_{\varepsilon,1}(s),z)-f(s,X^g_{\varepsilon,2}(s),z)\big)\nu(dz),X^g_{\varepsilon,1}(s)-X^g_{\varepsilon,2}(s)\big\rangle_{F^*_{1,2,\varepsilon}}ds.
\end{eqnarray}
Since $\Psi$ is Lipschitz, we have
\begin{eqnarray}\label{psii}
\big(\Psi(r)-\Psi(r')\big)(r-r')\geq \tilde{\alpha}|\Psi(r)-\Psi(r')|^2,\ \forall r, r'\in\Bbb{R}.
\end{eqnarray}
By \eqref{triple2}, \eqref{psii}, and \eqref{f}, we know that
\begin{eqnarray}\label{c35}
&&\!\!\!\!\!\!\!\!\|X^g_{\varepsilon,1}(t)-X^g_{\varepsilon,2}(t)\|^2_{F^*_{1,2,\varepsilon}}+2\tilde{\alpha}\int_0^t|\Psi(X^g_{\varepsilon,1}(s))-\Psi(X^g_{\varepsilon,2}(s))|_2^2ds\nonumber\\
\leq&&\!\!\!\!\!\!\!\!\frac{2}{\sqrt{\varepsilon}}\int_0^th_1(s)\|X^g_{\varepsilon,1}(s)-X^g_{\varepsilon,2}(s)\|^2_{F^*_{1,2,\varepsilon}}ds.
\end{eqnarray}
Since $h_1\in L^1([0,T];\Bbb{R}^+)$, 
by Gronwall's inequality, we can get $X^g_{\varepsilon,1}=X^g_{\varepsilon,2}$, which implies the uniqueness. This completes the proof of Proposition \ref{prop2}. \hspace{\fill}$\Box$

\vspace{3mm}
\textbf{Continuation of proof of Theorem \ref{Th2}}
\vspace{3mm}

The idea is to prove that the sequence $\{X^g_{\varepsilon}\}_{\varepsilon\in(0,1)}$ converges to the solution of \eqref{eq:2} as $\varepsilon\rightarrow0$.
Rewrite \eqref{eq:3} as follows.
\begin{eqnarray*}
dX^g_\varepsilon(t)+(1-L)\Psi(X^g_\varepsilon(t))dt=(1-\varepsilon)\Psi(X^g_\varepsilon(t))dt+\int_Zf(t,X^g_\varepsilon(t),z)(g(t,z)-1)\nu(dz)dt.
\end{eqnarray*}
Applying the chain rule in $F^*_{1,2}$, by \eqref{triple2}, we obtain
\begin{eqnarray}\label{t21}
&&\!\!\!\!\!\!\!\!\frac{1}{2}\|X^g_\varepsilon(t)\|^2_{F^*_{1,2}}+\int_0^t\langle \Psi(X^g_\varepsilon(s)),X^g_\varepsilon(s)\rangle_2ds\nonumber\\
=&&\!\!\!\!\!\!\!\!\frac{1}{2}\|x\|^2_{F^*_{1,2}}+(1-\varepsilon)\int_0^t\langle \Psi(X^g_\varepsilon(s)),X^g_\varepsilon(s)\rangle_{F^*_{1,2}}ds\nonumber\\
&&\!\!\!\!\!\!\!\!+\int_0^t\big\langle\int_Zf(s,X^g_\varepsilon(s),z)(g(s,z)-1)\nu(dz),X^g_\varepsilon(s)\big\rangle_{F^*_{1,2}} ds.
\end{eqnarray}
By \eqref{psii} and $\Psi(0)=0$, we know that
\begin{eqnarray}\label{t22}
\int_0^t\langle \Psi(X^g_\varepsilon(s)),X^g_\varepsilon(s)\rangle_2ds\geq\tilde{\alpha}\int_0^t|\Psi(X^g_\varepsilon(s))|_2^2ds.
\end{eqnarray}
Substituting \eqref{t22} into \eqref{t21}, we get that by \textbf{(H2)(ii)}, by Young's inequality, and since $L^2(\mu)\subset F^*_{1,2}$ densely and continuously, we have
\begin{eqnarray*}
&&\!\!\!\!\!\!\!\!\frac{1}{2}\|X^g_\varepsilon(t)\|^2_{F^*_{1,2}}+\tilde{\alpha}\int_0^t|\Psi(X^g_\varepsilon(s))|_2^2ds\nonumber\\
\leq&&\!\!\!\!\!\!\!\!\frac{1}{2}\|x\|^2_{F^*_{1,2}}+\int_0^t\|\Psi(X^g_\varepsilon(s))\|_{F^*_{1,2}}\cdot\|X^g_\varepsilon(s)\|_{F^*_{1,2}}ds\nonumber\\
&&\!\!\!\!\!\!\!\!+\int_0^t\int_Zl_2(s,z)(\|X^g_\varepsilon(s)\|_{F^*_{1,2}}+1)|g(s,z)-1|\nu(dz)\cdot\|X^g_\varepsilon(s)\|_{F^*_{1,2}}ds\nonumber\\
=&&\!\!\!\!\!\!\!\!\frac{1}{2}\|x\|^2_{F^*_{1,2}}+\int_0^t\|\Psi(X^g_\varepsilon(s))\|_{F^*_{1,2}}\cdot\|X^g_\varepsilon(s)\|_{F^*_{1,2}}ds+\int_0^th_2(s)(\|X^g_\varepsilon(s)\|_{F^*_{1,2}}+1)\cdot\|X^g_\varepsilon(s)\|_{F^*_{1,2}}ds\nonumber\\
\leq&&\!\!\!\!\!\!\!\!\frac{1}{2}\|x\|^2_{F^*_{1,2}}+\int_0^t|\Psi(X^g_\varepsilon(s))|_2\cdot\|X^g_\varepsilon(s)\|_{F^*_{1,2}}ds+\int_0^th_2(s)\cdot 2(\|X^g_\varepsilon(s)\|^2_{F^*_{1,2}}+1)ds\nonumber\\
\leq&&\!\!\!\!\!\!\!\!\frac{1}{2}\|x\|^2_{F^*_{1,2}}+\int_0^t\frac{\tilde{\alpha}}{2}|\Psi(X^g_\varepsilon(s))|^2_2ds+\int_0^t\frac{1}{2\tilde{\alpha}}\|X^g_\varepsilon(s)\|^2_{F^*_{1,2}}ds\nonumber\\
&&\!\!\!\!\!\!\!\!+\int_0^t2h_2(s)\|X^g_\varepsilon(s)\|^2_{F^*_{1,2}}ds+\int_0^t2h_2(s)ds\nonumber\\
=&&\!\!\!\!\!\!\!\!\frac{1}{2}\|x\|^2_{F^*_{1,2}}+\int_0^t\frac{\tilde{\alpha}}{2}|\Psi(X^g_\varepsilon(s))|^2_2ds+\int_0^t\Big(\frac{1}{2\tilde{\alpha}}+2h_2(s)\Big)\|X^g_\varepsilon(s)\|^2_{F^*_{1,2}}ds+\int_0^t2h_2(s)ds,
\end{eqnarray*}
this yields
\begin{eqnarray}
&&\!\!\!\!\!\!\!\!\|X^g_\varepsilon(t)\|^2_{F^*_{1,2}}+\tilde{\alpha}\int_0^t|\Psi(X^g_\varepsilon(s))|_2^2ds\nonumber\\
\leq&&\!\!\!\!\!\!\!\!\|x\|^2_{F^*_{1,2}}+\int_0^t\Big(\frac{1}{\tilde{\alpha}}+4h_2(s)\Big)\|X^g_\varepsilon(s)\|^2_{F^*_{1,2}}ds+\int_0^t4h_2(s)ds.
\end{eqnarray}
By Gronwall's lemma, for any $t\in[0,T]$,
\begin{eqnarray}\label{est1}
&&\!\!\!\!\!\!\!\!\|X^g_\varepsilon(t)\|^2_{F^*_{1,2}}+\tilde{\alpha}\int_0^t|\Psi(X^g_\varepsilon(s))|_2^2ds\nonumber\\
\leq&&\!\!\!\!\!\!\!\!\big(\|x\|^2_{F^*_{1,2}}+\int_0^t4h_2(s)ds\big)\cdot \exp\Big\{\int_0^t\frac{1}{\tilde{\alpha}}+4h_2(s)ds\Big\}\nonumber\\
\leq&&\!\!\!\!\!\!\!\!\big(\|x\|^2_{F^*_{1,2}}+4C_{l_2,N}\big)\cdot e^{\frac{T}{\tilde{\alpha}}+4C_{l_2,N}}\nonumber\\
:=&&\!\!\!\!\!\!\!\!C_{l_2,N,T}(\|x\|^2_{F^*_{1,2}}+1).
\end{eqnarray}

Now, let us prove the convergence of $\{X^g_\varepsilon\}_{\varepsilon\in(0,1)}$.
Applying the chain rule to $\|X^g_\varepsilon(t)-X^g_{\varepsilon'}(t)\|^2_{F^*_{1,2}}$, $\varepsilon, \varepsilon'\in(0,1)$, by \eqref{triple2}, we get that, for all $t\in[0,T]$,
\begin{eqnarray}\label{t23}
&&\!\!\!\!\!\!\!\!\|X^g_\varepsilon(t)-X^g_{\varepsilon'}(t)\|^2_{F^*_{1,2}}+2\int_0^t\big\langle\Psi(X^g_\varepsilon(s))-\Psi(X^g_{\varepsilon'}(s)),X^g_\varepsilon(s)-X^g_{\varepsilon'}(s)\big\rangle_2ds\nonumber\\
=&&\!\!\!\!\!\!\!\!2\int_0^t\big\langle\Psi(X^g_\varepsilon(s))-\Psi(X^g_{\varepsilon'}(s)),X^g_\varepsilon(s)-X^g_{\varepsilon'}(s)\big\rangle_{F^*_{1,2}}ds\nonumber\\
&&\!\!\!\!\!\!\!\!-2\int_0^t\big\langle\varepsilon\Psi(X^g_\varepsilon(s))-\varepsilon'\Psi(X^g_{\varepsilon'}(s)),X^g_\varepsilon(s)-X^g_{\varepsilon'}(s)\big\rangle_{F^*_{1,2}}ds\nonumber\\
&&\!\!\!\!\!\!\!\!+2\int_0^t\Big\langle\int_Z\big(f(s,X^g_\varepsilon(s),z)-f(s,X^g_{\varepsilon'}(s),z)\big)(g(s,z)-1)\nu(dz),\nonumber\\
&&\!\!\!\!\!\!\!\!~~~~~~~~~~~~~~~~~~~~~~~~~~~~~~~~~~~~~~~~~~~~~~~~~~~~~~~~~~~~~~~~~~~~~X^g_\varepsilon(s)-X^g_{\varepsilon'}(s)\Big\rangle_{F^*_{1,2}}ds.
\end{eqnarray}
By \eqref{psii}, we know that
\begin{eqnarray}\label{t24}
&&\!\!\!\!\!\!\!\!2\int_0^t\big\langle\Psi(X^g_\varepsilon(s))-\Psi(X^g_{\varepsilon'}(s)),X^g_\varepsilon(s)-X^g_{\varepsilon'}(s)\big\rangle_2ds\nonumber\\
\geq&&\!\!\!\!\!\!\!\!2\tilde{\alpha}\int_0^t|\Psi(X^g_\varepsilon(s))-\Psi(X^g_{\varepsilon'}(s))|_2^2ds.
\end{eqnarray}
Since $L^2(\mu)$ densely embeds into $F^*_{1,2}$, the first term in the right hand-side of \eqref{t23} is dominated by
\begin{eqnarray}\label{t25}
2\int_0^t|\Psi(X^g_\varepsilon(s))-\Psi(X^g_{\varepsilon'}(s))|_2\cdot\|X^g_\varepsilon(s)-X^g_{\varepsilon'}(s)\|_{F^*_{1,2}}ds.
\end{eqnarray}
Similarly, the second term in the right hand-side of \eqref{t23} is dominated by
\begin{eqnarray}\label{t26}
2\int_0^t\big(\varepsilon|\Psi(X^g_\varepsilon(s))|_2+\varepsilon'|\Psi(X^g_{\varepsilon'}(s))|_2\big)\cdot\|X^g_\varepsilon(s)-X^g_{\varepsilon'}(s)\|_{F^*_{1,2}}ds.
\end{eqnarray}
By \textbf{(H2)(i)} and \eqref{h1}, the third term in the right hand-side of \eqref{t23} is dominated by
\begin{eqnarray}\label{t27}
2\int_0^t h_1(s)\|X^g_\varepsilon(s)-X^g_{\varepsilon'}(s)\|^2_{F^*_{1,2}}ds.
\end{eqnarray}

Taking \eqref{t24}-\eqref{t27} into \eqref{t23}, by Young's inequality, we get
\begin{eqnarray*}
&&\!\!\!\!\!\!\!\!\|X^g_\varepsilon(t)-X^g_{\varepsilon'}(t)\|^2_{F^*_{1,2}}+2\tilde{\alpha}\int_0^t|\Psi(X^g_\varepsilon(s))-\Psi(X^g_{\varepsilon'}(s))|_2^2ds\nonumber\\
\leq&&\!\!\!\!\!\!\!\!2\int_0^t|\Psi(X^g_\varepsilon(s))-\Psi(X^g_{\varepsilon'}(s))|_2\cdot \|X^g_\varepsilon(s)-X^g_{\varepsilon'}(s)\|_{F^*_{1,2}}ds\nonumber\\
&&\!\!\!\!\!\!\!\!+2\int_0^t\big(\varepsilon|\Psi(X^g_\varepsilon(s))|_2+\varepsilon'|\Psi(X^g_{\varepsilon'}(s))|_2\big)\cdot\|X^g_\varepsilon(s)-X^g_{\varepsilon'}(s)\|_{F^*_{1,2}}ds\nonumber\\
&&\!\!\!\!\!\!\!\!+2\int_0^t h_1(s)\|X^g_\varepsilon(s)-X^g_{\varepsilon'}(s)\|^2_{F^*_{1,2}}ds\nonumber\\
\leq&&\!\!\!\!\!\!\!\!\tilde{\alpha}\int_0^t|\Psi(X^g_\varepsilon(s))-\Psi(X^g_{\varepsilon'}(s))|_2^2ds+\frac{1}{\tilde{\alpha}}\int_0^t\|X^g_\varepsilon(s)-X^g_{\varepsilon'}(s)\|^2_{F^*_{1,2}}ds\nonumber\\
&&\!\!\!\!\!\!\!\!+2\int_0^t\varepsilon^2|\Psi(X^g_\varepsilon(s))|^2_2+\varepsilon'^2|\Psi(X^g_{\varepsilon'}(s))|^2_2ds+\int_0^t\|X^g_\varepsilon(s)-X^g_{\varepsilon'}(s)\|^2_{F^*_{1,2}}ds\nonumber\\
&&\!\!\!\!\!\!\!\!+2\int_0^th_1(s)\|X^g_\varepsilon(s)-X^g_{\varepsilon'}(s)\|^2_{F^*_{1,2}}ds\nonumber\\
\leq&&\!\!\!\!\!\!\!\!\tilde{\alpha}\int_0^t|\Psi(X^g_\varepsilon(s))-\Psi(X^g_{\varepsilon'}(s))|_2^2ds+\int_0^t\big(\frac{1}{\tilde{\alpha}}+1+2h_1(s)\big)\cdot\|X^g_\varepsilon(s)-X^g_{\varepsilon'}(s)\|^2_{F^*_{1,2}}ds\nonumber\\
&&\!\!\!\!\!\!\!\!+2(\varepsilon^2+\varepsilon'^2)\int_0^t|\Psi(X^g_\varepsilon(s))|^2_2+|\Psi(X^g_{\varepsilon'}(s))|^2_2ds.
\end{eqnarray*}
This yields,
\begin{eqnarray*}
&&\!\!\!\!\!\!\!\!\sup_{s\in[0,t]}\|X^g_\varepsilon(s)-X^g_{\varepsilon'}(s)\|^2_{F^*_{1,2}}+\tilde{\alpha}\int_0^t|\Psi(X^g_\varepsilon(s))-\Psi(X^g_{\varepsilon'}(s))|_2^2ds\nonumber\\
\leq&&\!\!\!\!\!\!\!\!\int_0^t(\frac{1}{\tilde{\alpha}}+1+2h_1(s))\cdot\|X^g_\varepsilon(s)-X^g_{\varepsilon'}(s)\|^2_{F^*_{1,2}}ds\nonumber\\
&&\!\!\!\!\!\!\!\!+2(\varepsilon^2+\varepsilon'^2)\int_0^t|\Psi(X^g_\varepsilon(s))|^2_2+|\Psi(X^g_{\varepsilon'}(s))|^2_2ds.
\end{eqnarray*}
Since \eqref{est1} holds, by Gronwall's inequality, we know that there exists a positive constant $C_{l_1,l_2,N,T}\in(0,\infty)$ which depends on $l_1, l_2, N, T$, but is independent of $\varepsilon, \varepsilon'$, such that
\begin{eqnarray*}
&&\!\!\!\!\!\!\!\!\sup_{s\in[0,T]}\|X^g_\varepsilon(s)-X^g_{\varepsilon'}(s)\|^2_{F^*_{1,2}}+\tilde{\alpha}\int_0^T|\Psi(X^g_\varepsilon(s))-\Psi(X^g_{\varepsilon'}(s))|_2^2ds\nonumber\\
\leq&&\!\!\!\!\!\!\!\!2(\varepsilon^2+\varepsilon'^2)\int_0^T|\Psi(X^g_\varepsilon(s))|^2_2+|\Psi(X^g_{\varepsilon'}(s))|^2_2ds\cdot e^{\int_0^T\frac{1}{\tilde{\alpha}}+1+2h_1(s)ds}\nonumber\\
\leq&&\!\!\!\!\!\!\!\!4(\varepsilon^2+\varepsilon'^2)\cdot\frac{1}{\tilde{\alpha}}C_{l_2,N,T}(\|x\|^2_{F^*_{1,2}}+1)\cdot e^{\frac{T}{\tilde{\alpha}}+T+2C_{l_1,N}}\nonumber\\
:=&&\!\!\!\!\!\!\!\!C_{l_1,l_2,N,T}(\varepsilon^2+\varepsilon'^2)(\|x\|^2_{F^*_{1,2}}+1).
\end{eqnarray*}
Hence, there exists a function $X^g\in C([0,T];F^*_{1,2})$ such that $X^g_\varepsilon\rightarrow X^g$ in $C([0,T];F^*_{1,2})$ as $\varepsilon\rightarrow0$.

Next, let us prove that $X^g$ satisfies \eqref{eq:2}. Since
\begin{eqnarray*}
X^g_\varepsilon\longrightarrow X^g~~\text{in}~~C([0,T];F^*_{1,2})~~\text{as}~~\varepsilon\rightarrow0,
\end{eqnarray*}
using similar arguments as in Claim \ref{claim3} and \cite[Claim 4.3]{WZ21}, we have
\begin{eqnarray*}\label{t1}
\int_0^\cdot\int_Zf(s,X^g_\varepsilon(s),z)(g(s,z)-1)\nu(dz)ds\rightarrow\int_0^\cdot\int_Zf(s,X^g(s),z)(g(s,z)-1)\nu(dz)ds
\end{eqnarray*}
in $C([0,T];F^*_{1,2})$ as $\varepsilon\rightarrow0$,
\begin{eqnarray}\label{t2}
\Psi(X^g_\varepsilon(\cdot))\rightarrow\Psi(X^g(\cdot))~\text{weakly~in}~~L^2([0,T];L^2(\mu))~~\text{as}~~\varepsilon\rightarrow0,
\end{eqnarray}
and furthermore, $\int_0^\cdot\Psi(X^g(s))ds\in C([0,T];F_{1,2})$.

Hence $X^g$ satisfies \eqref{eq:2}. By lower semicontinuity of the norm and \eqref{est2}, we know that \eqref{defi4} holds. This completes the proof of the existence of solutions in Theorem \ref{Th2}.

\vspace{3mm}
\textbf{Uniqueness}
\vspace{3mm}

Assume that $X^g_1$ and $X^g_2$ are two solutions to \eqref{eq:2}, we know that
\begin{eqnarray}\label{t26}
&&\!\!\!\!\!\!\!\!X^g_1(t)-X^g_2(t)-L\int_0^t\Psi(X^g_1(s))-\Psi(X^g_2(s))ds\nonumber\\
=&&\!\!\!\!\!\!\!\!\int_0^t\int_Z\big(f(s,X^g_1(s),z)-f(s,X^g_2(s),z)\big)(g(s,z)-1)\nu(dz)ds,~~\forall~t\in[0,T].
\end{eqnarray}
Rewrite \eqref{t26} as
\begin{eqnarray*}
&&\!\!\!\!\!\!\!\!X^g_1(t)-X^g_2(t)+(1-L)\int_0^t\Psi(X^g_1(s))-\Psi(X^g_2(s))ds\nonumber\\
=&&\!\!\!\!\!\!\!\!\int_0^t\Psi(X^g_1(s))-\Psi(X^g_2(s))ds\nonumber\\
&&\!\!\!\!\!\!\!\!+\int_0^t\int_Z\big(f(s,X^g_1(s),z)-f(s,X^g_2(s),z)\big)(g(s,z)-1)\nu(dz)ds,~~\forall~t\in[0,T].
\end{eqnarray*}
Applying the chain rule to $\|X^g_1(t)-X^g_2(t)\|^2_{F^*_{1,2}}$, by \eqref{triple2}, we know that,
\begin{eqnarray*}
&&\!\!\!\!\!\!\!\!\|X^g_1(t)-X^g_2(t)\|^2_{F^*_{1,2}}+2\int_0^t\langle \Psi(X^g_1(s))-\Psi(X^g_2(s)), X^g_1(s)-X^g_2(s)\rangle_2ds\nonumber\\
=&&\!\!\!\!\!\!\!\!2\int_0^t\langle \Psi(X^g_1(s))-\Psi(X^g_2(s)), X^g_1(s)-X^g_2(s)\rangle_{F^*_{1,2}}ds\nonumber\\
&&\!\!\!\!\!\!\!\!+2\int_0^t\big\langle\int_Z\big(f(s,X^g_1(s),z)-f(s,X^g_2(s),z)\big)(g(s,z)-1)\nu(dz),X^g_1(s)-X^g_2(s)\big\rangle_{F^*_{1,2}}ds.
\end{eqnarray*}
By \eqref{psii}, $\Psi(0)=0$, and \textbf{(H2)(i)}, we have
\begin{eqnarray*}
&&\!\!\!\!\!\!\!\!\|X^g_1(t)-X^g_2(t)\|^2_{F^*_{1,2}}+2\tilde{\alpha}\int_0^t|\Psi(X^g_1(s))-\Psi(X^g_2(s))|_2^2ds\nonumber\\
\leq&&\!\!\!\!\!\!\!\!2\int_0^t\|\Psi(X^g_1(s))-\Psi(X^g_2(s))\|_{F^*_{1,2}}\cdot\|X^g_1(s)-X^g_2(s)\|_{F^*_{1,2}}ds\nonumber\\
&&\!\!\!\!\!\!\!\!+2\int_0^th_1(s)\cdot\|X^g_1(s)-X^g_2(s)\|^2_{F^*_{1,2}}ds.
\end{eqnarray*}
Since $L^2(\mu)\subset F^*_{1,2}$ continuously and densely, using Young's inequality, we obtain that
\begin{eqnarray*}
&&\!\!\!\!\!\!\!\!\|X^g_1(t)-X^g_2(t)\|^2_{F^*_{1,2}}+2\tilde{\alpha}\int_0^t|\Psi(X^g_1(s))-\Psi(X^g_2(s))|_2^2ds\nonumber\\
\leq&&\!\!\!\!\!\!\!\!2\tilde{\alpha}\int_0^t|\Psi(X^g_1(s))-\Psi(X^g_2(s))|_2^2ds+\int_0^t\big(\frac{1}{2\tilde{\alpha}}+2h_1(s)\big)\|X^g_1(s)-X^g_2(s)\|^2_{F^*_{1,2}}ds,
\end{eqnarray*}
which yields,
\begin{eqnarray*}
\|X^g_1(t)-X^g_2(t)\|^2_{F^*_{1,2}}\leq\int_0^t\big(\frac{1}{2\tilde{\alpha}}+2h_1(s)\big)\|X^g_1(s)-X^g_2(s)\|^2_{F^*_{1,2}}ds.
\end{eqnarray*}
Since $h_1\in L^1([0,T];\Bbb{R}^+)$, by Gronwall's lemma, we get $X^g_1=X^g_2$. This completes the proof of Theorem \ref{Th2}. \hspace{\fill}$\Box$

\section{Large deviations---Proof of Theorem \ref{Th1}}\label{Section5}
\setcounter{equation}{0}
 \setcounter{definition}{0}

 \begin{proof}{\bf Proof of Theorem \ref{Th1}}

Recall $S=\bigcup_{N=1}^\infty S^N$. From Theorem \ref{Th2}, we know that there is a measurable mapping
\begin{eqnarray}\label{G0}
\mathcal{G}^0:S\rightarrow D([0,T];F^*_{1,2})
\end{eqnarray}
such that $\mathcal{G}^0(g):=X^g$, where $X^g$ is the unique solution to \eqref{eq:2}.

 Set
 \begin{eqnarray*}
 {\mathcal{A}}^N:=\{\varphi\in {\mathcal{A}}~\text{and}~\varphi(\omega)\in S^N, {\Bbb{P}}\text{-}a.s.\}.
\end{eqnarray*}

Let $\{K_n\subset Z, n=1,2,...\}$ be an increasing sequence of compact sets of $Z$ such that $\bigcup_{n=1}^\infty K_n=Z$. For each $n$, let $K^c_n=Z\setminus K_n$ and
\begin{eqnarray*}
 {\mathcal{A}}_{b,n}=&&\!\!\!\!\!\!\!\!\{\varphi\in {\mathcal{A}}:~\text{for~all}~(t,\omega)\in[0,T]\times {\Omega},~n\geq\varphi(t,x,\omega)\geq\frac{1}{n}~\text{if}~x\in K_n\\
&&\!\!\!\!\!\!\!\!\text{and}~\varphi(t,x,\omega)=1~\text{if}~x\in K^c_n\},
\end{eqnarray*}
and let $ {\mathcal{A}}_b=\bigcup_{n=1}^\infty {\mathcal{A}}_{b,n}$. Define $\widetilde{\mathcal{A}}^N= {\mathcal{A}}^N\cap  {\mathcal{A}}_b$.

From Theorem \ref{th}, \cite[page: 2851, Lemma 4.3]{XZ} and the statement under it (or \cite[Lemma 7.1]{BPZ}, \cite[Theorem 3.8]{LSZZ}), we see that, for any $\epsilon>0$, there is a measurable mapping
\begin{eqnarray}\label{G1}
\mathcal{G}^\epsilon:\mathcal{M}_{FC}(Z_T)\rightarrow D([0,T];F^*_{1,2})
\end{eqnarray}
such that for any $N\in\Bbb{N}$ and $\varphi_\epsilon\in\widetilde{\mathcal{A}}^N$, we have $\mathcal{G}^\epsilon(\epsilon N^{\epsilon^{-1}\varphi_\epsilon}):=X^{\varphi_\epsilon}$, where $X^{\varphi_\epsilon}$ is the unique solution of the following controlled SPDE:
\begin{eqnarray}\label{ldp1}
X^{\varphi_\epsilon}(t)=&&\!\!\!\!\!\!\!\!x+L\int_0^t\Psi(X^{\varphi_\epsilon}(s))ds+\epsilon\int_0^t\int_Zf(s,X^{\varphi_\epsilon}(s-),z)\widetilde{N}^{\epsilon^{-1}\varphi_\epsilon}(dz,ds)\nonumber\\
&&\!\!\!\!\!\!\!\!+\int_0^t\int_Zf(s,X^{\varphi_\epsilon}(s),z)\big(\varphi_\epsilon(s,z)-1\big)\nu(dz)ds.
\end{eqnarray}

\vspace{3mm}

According to \cite[Theorem 4.4]{LSZZ}, in which the conditions are an adaption of the original conditions given in \cite[page:528, Condition 2.2.]{BCD} and \cite[page:736, Condition 4.1.]{BDM2}, Theorem \ref{Th1} is proved once we can prove the following condition holds:

\begin{Condition}\label{Condition}
(a)~~For any $N<\infty$, let $g_n,~n\geq1, g\in S^N$ be such that $g_n\rightarrow g$ as $n\rightarrow\infty$. Then
\begin{eqnarray*}
\mathcal{G}^0(g_n)\rightarrow\mathcal{G}^0(g)~~\text{in}~D([0,T];F^*_{1,2}).
\end{eqnarray*}

(b)~~For any $N<\infty$, any family $\{\varphi_\epsilon\}_{\epsilon>0}\subset\tilde{\mathcal{A}}^N$, and any $\theta>0$,
\begin{eqnarray*}
\lim_{\epsilon\rightarrow0}\Bbb{P}\{\rho(X^{\varphi_\epsilon},Y^{\varphi_\epsilon})>\theta\}=0,
\end{eqnarray*}
where
$$X^{\varphi_\epsilon}:=\mathcal{G}^\epsilon(\epsilon N^{\epsilon^{-1}\varphi_\epsilon}),~~~Y^{\varphi_\epsilon}:=\mathcal{G}^0(\varphi_\epsilon),$$
and
$\rho(\cdot, \cdot)$ stands for the Skorohod metric on the space $D([0,T];F^*_{1,2})$.
\end{Condition}


We will establish (a) and (b) in Subsections \ref{Subsection5.1} and \ref{Subsection5.2}, respectively.

 \end{proof}

\subsection{Proof of (a) in Condition \ref{Condition}}\label{Subsection5.1}

%
%

\begin{proof}{\bf Proof of (a) in Condition \ref{Condition}}

For $N<\infty$, let $g_n,~n\geq1$, $g\in S^N$ be such that $g_n\rightarrow g$ as $n\rightarrow\infty$. By Theorem \ref{Th2}, we know that the following deterministic PDE:
\begin{equation}\label{ld2}
\left\{
  \begin{array}{ll}
    dX^g(t)-L\Psi(X^g(t))dt=\int_Zf(s,X^g(s),z)(g(s,z)-1)\nu(dz)dt, \\
    X^g(0)=x\in L^2(\mu),
  \end{array}
\right.
\end{equation}
has a unique solution $X^g:=\mathcal{G}^0(g)$, and the following equation:
\begin{equation}\label{ld3}
\left\{
  \begin{array}{ll}
    dX^{g_n}(t)-L\Psi(X^{g_n}(t))dt=\int_Zf(s,X^{g_n}(s),z)(g_n(s,z)-1)\nu(dz)dt, \\
    X^{g_n}(0)=x\in L^2(\mu),
  \end{array}
\right.
\end{equation}
has a unique solution $X^{g_n}:=\mathcal{G}^0(g_n)$.

From \eqref{ld2} and \eqref{ld3}, we can get
\begin{eqnarray*}
&&\!\!\!\!\!\!\!\!d(X^{g_n}(t)-X^g(t))+(1-L)\big(\Psi(X^{g_n}(t))-\Psi(X^g(t))\big)dt\nonumber\\
=&&\!\!\!\!\!\!\!\!\big(\Psi(X^{g_n}(t))-\Psi(X^g(t))\big)dt\nonumber\\
&&\!\!\!\!\!\!\!\!+\int_Zf(s,X^g_n(s),z)\big(g_n(s,z)-1\big)-f(s,X^{g}(s),z)\big(g(s,z)-1\big)\nu(dz)dt.
\end{eqnarray*}
Applying the chain rule to $\|X^{g_n}(t)-X^g(t)\|^2_{F^*_{1,2}}$, by \eqref{triple2}, we get
\begin{eqnarray}\label{5.00}
&&\!\!\!\!\!\!\!\!\|X^{g_n}(t)-X^g(t)\|^2_{F^*_{1,2}}+2\int_0^t\big\langle\Psi(X^{g_n}(s))-\Psi(X^g(s)),X^{g_n}(s)-X^g(s)\big\rangle_2 ds\nonumber\\
=&&\!\!\!\!\!\!\!\!2\int_0^t\big\langle\Psi(X^{g_n}(s))-\Psi(X^g(s)),X^{g_n}(s)-X^g(s)\big\rangle_{F^*_{1,2}} ds\nonumber\\
&&\!\!\!\!\!\!\!\!+2\int_0^t\Big\langle\int_Zf(s,X^{g_n}(s),z)\big(g_n(s,z)-1\big)-f(s,X^g(s),z)\big(g(s,z)-1\big)\nu(dz),\nonumber\\
&&\!\!\!\!\!\!\!\!~~~~~~~~~~~~~~~~~~~~~~~~~~~~~~~~~~~~~~~~~~~~~~~~~~~~~~~~~~~~~~~~~~~X^{g_n}(s)-X^g(s)\Big\rangle_{F^*_{1,2}}ds.
\end{eqnarray}
Consider the second term in the left hand-side of \eqref{5.00}.
From \eqref{psii}, we know that
\begin{eqnarray}\label{5.01}
&&\!\!\!\!\!\!\!\!2\int_0^t\big\langle\Psi(X^{g_n}(s))-\Psi(X^g(s)),X^{g_n}(s)-X^g(s)\big\rangle_2 ds\nonumber\\
\geq&&\!\!\!\!\!\!\!\!2\tilde{\alpha}\int_0^t|\Psi(X^{g_n}(s))-\Psi(X^g(s))|_2^2ds.
\end{eqnarray}
For the first term in the right hand-side of \eqref{5.00}, since $L^2(\mu)\subset F^*_{1,2}$ densely, by Young's inequality, we know that
\begin{eqnarray}\label{5.02}
&&\!\!\!\!\!\!\!\!2\int_0^t\big\langle\Psi(X^{g_n}(s))-\Psi(X^g(s)),X^{g_n}(s)-X^g(s)\big\rangle_{F^*_{1,2}} ds\nonumber\\
\leq&&\!\!\!\!\!\!\!\!2\int_0^t|\Psi(X^{g_n}(s))-\Psi(X^g(s))|_2\cdot\|X^{g_n}(s)-X^g(s)\|_{F^*_{1,2}}ds\nonumber\\
\leq&&\!\!\!\!\!\!\!\!2\tilde{\alpha}\int_0^t|\Psi(X^{g_n}(s))-\Psi(X^g(s))|_2^2ds+\frac{1}{2\tilde{\alpha}}\int_0^t\|X^{g_n}(s)-X^g(s)\|^2_{F^*_{1,2}}ds.
\end{eqnarray}
To estimate the second term in the right hand-side of \eqref{5.00}, for simplicity, denote
\begin{eqnarray}\label{5.03}
&&\!\!\!\!\!\!\!\!I_n(t)\nonumber\\
:=&&\!\!\!\!\!\!\!\!2\int_0^t\Big\langle\int_Z\Big(f(s,X^{g_n}(s),z)\big(g_n(s,z)-1\big)-f(s,X^g(s),z)\big(g(s,z)-1\big)\Big)\nu(dz),\nonumber\\
&&\!\!\!\!\!\!\!\!~~~~~~~~~~~~~~~~~~~~~~~~~~~~~~~~~~~~~~~~~~~~~~~~~~~~~~~~~~~~~~~~~~~~~~~~~~X^{g_n}(s)-X^g(s)\Big\rangle_{F^*_{1,2}}ds\nonumber\\
=&&\!\!\!\!\!\!\!\!2\int_0^t\Big\langle \int_Z f(s,X^{g}(s),z)\big((g_n(s,z)-1)-(g(s,z)-1)\big)\nu(dz),X^{g_n}(s)-X^g(s)\Big\rangle_{F^*_{1,2}}ds\nonumber\\
&&\!\!\!\!\!\!\!\!+2\int_0^t\Big\langle \int_Z \big(f(s,X^{g_n}(s),z)-f(s,X^{g}(s),z)\big)\big(g_n(s,z)-1\big)\nu(dz),X^{g_n}(s)-X^g(s)\Big\rangle_{F^*_{1,2}}ds\nonumber\\
:=&&\!\!\!\!\!\!\!\!Q_{n,1}(t)+Q_{n,2}(t).
\end{eqnarray}
For $Q_{n,2}(t)$, notice that from \textbf{(H2)(i)}, we know
\begin{eqnarray}\label{5.04}
|Q_{n,2}(t)|\leq 2\int_0^t\int_Z l_1(s,z)|g_n(s,z)-1|\cdot\|X^{g_n}(s)-X^g(s)\|^2_{F^*_{1,2}}\nu(dz)ds.
\end{eqnarray}
Denoting
\begin{eqnarray}\label{5.040}
h_n(s):=\int_Z l_1(s,z)|g_n(s,z)-1|\nu(dz),
\end{eqnarray}
from \eqref{h0}, we know that
\begin{eqnarray}\label{eq zhai 4}
\sup_{n\geq1}\int_0^Th_n(s)ds\leq C_{l_1,N}.
\end{eqnarray}


substituting \eqref{5.01}-\eqref{5.040} into \eqref{5.00}, we get
\begin{eqnarray*}
&&\!\!\!\!\!\!\!\!\|X^{g_n}(t)-X^g(t)\|^2_{F^*_{1,2}}\nonumber\\
\leq&&\!\!\!\!\!\!\!\!\frac{1}{2\tilde{\alpha}}\int_0^t\|X^{g_n}(s)-X^g(s)\|^2_{F^*_{1,2}}ds+Q_{n,1}(t)+2\int_0^th_n(s)\cdot\|X^{g_n}(s)-X^g(s)\|^2_{F^*_{1,2}}ds.
\end{eqnarray*}
By Gronwall's inequality and \eqref{eq zhai 4}, we obtain
\begin{eqnarray}\label{convergence1}
\sup_{t\in[0,T]}\|X^{g_n}(t)-X^g(t)\|^2_{F^*_{1,2}}
&\leq&\sup_{t\in[0,T]}|Q_{n,1}(t)|\cdot e^{\frac{T}{2\tilde{\alpha}}+2\int_0^Th_n(s)ds}\nonumber\\
&\leq&\sup_{t\in[0,T]}|Q_{n,1}(t)|\cdot e^{\frac{T}{2\tilde{\alpha}}+2C_{l_1,N}}\nonumber\\
&:=&C_{l_1,T,N}\cdot\sup_{t\in[0,T]}|Q_{n,1}(t)|.
\end{eqnarray}

Now, let us estimate $|Q_{n,1}(t)|$. By \eqref{eq 4} and (\ref{eq H infinit H}),
we know that for any $\varepsilon>0$, there exists a compact subset $K_\varepsilon\subset Z$ such that the following  holds:
\begin{eqnarray}\label{eq zhai 1}
\sup_{i=1,2}\sup_{h\in S^N}\int_0^T\int_{K_\varepsilon^c}l_i(s,z)|h(s,z)-1|\nu(dz)ds\leq\varepsilon.
\end{eqnarray}
 Then we have
\begin{eqnarray}\label{Qn1}
Q_{n,1}(t)=&&\!\!\!\!\!\!\!\!2\int_0^t\int_{K_\varepsilon}\Big\langle f(s,X^g(s),z)\big(g_n(s,z)-g(s,z)\big), X^{g_n}(s)-X^g(s)\Big\rangle_{F^*_{1,2}}\nu(dz)ds\nonumber\\
&&\!\!\!\!\!\!\!\!+2\int_0^t\int_{K_\varepsilon^c}\Big\langle f(s,X^g(s),z)\big(g_n(s,z)-g(s,z)\big), X^{g_n}(s)-X^g(s)\Big\rangle_{F^*_{1,2}}\nu(dz)ds\nonumber\\
:=&&\!\!\!\!\!\!\!\!I_{n,1}(t)+I_{n,2}(t).
\end{eqnarray}
Since \eqref{defi4} holds, we know that, for any $t\in[0,T]$,
\begin{eqnarray}\label{In2}
|I_{n,2}(t)|&&\!\!\!\!\!\!\!\!\leq2\int_0^T\int_{K_\varepsilon^c}l_2(s,z)\big(\|X^g(s)\|_{F^*_{1,2}}+1\big)\cdot\nonumber\\
&&\!\!\!\!\!\!\!\!~~~~~~~~~~~~~~~~~~~~~~~~~~~~~~~~~~~~~|g_n(s,z)-g(s,z)|\cdot\|X^{g_n}(s)-X^g(s)\|_{F^*_{1,2}}\nu(dz)ds\nonumber\\
&&\!\!\!\!\!\!\!\!\leq2\Big[\sup_{s\in[0,T]}\big(\|X^g(s)\|_{F^*_{1,2}}+1\big)\big(\|X^{g_n}(s)\|_{F^*_{1,2}}+\|X^g(s)\|_{F^*_{1,2}}\big)\Big]\cdot\nonumber\\
&&\!\!\!\!\!\!\!\!~~~~~\Big(\int_0^T\int_{K_\varepsilon^c}l_2(s,z)|g_n(s,z)-1|\nu(dz)ds+\int_0^T\int_{K_\varepsilon^c}l_2(s,z)|g(s,z)-1|\nu(dz)ds\Big)\nonumber\\
&&\!\!\!\!\!\!\!\!\leq C_{N,T}(|x|_2^2+1)\cdot \varepsilon.
\end{eqnarray}

To estimate $I_{n,1}(t)$, define
\begin{eqnarray*}
A_{2,J}=\{(s,z)\in[0,T]\times Z:l_2(s,z)\geq J\}.
\end{eqnarray*}
For a subset $A\subset[0,T]\times Z$, in the following,
let $A^c$ denote the complement of $A$.

Denote
\begin{eqnarray*}
I_{n,1,J}(t)=2\int_0^t\int_{K_\varepsilon}\big\langle f(s,X^g(s),z)\big(g_n(s,z)-g(s,z)\big),X^{g_n(s)}-X^g(s)\big\rangle_{F^*_{1,2}} 1_{A_{2,J}}(s,z)\nu(dz)ds,
\end{eqnarray*}
\begin{eqnarray*}
I_{n,1,J^c}(t)=2\int_0^t\int_{K_\varepsilon}\big\langle f(s,X^g(s),z)\big(g_n(s,z)-g(s,z)\big),X^{g_n(s)}-X^g(s)\big\rangle_{F^*_{1,2}} 1_{A_{2,J}^c}(s,z)\nu(dz)ds.
\end{eqnarray*}
Then,
$$I_{n,1}(t)=I_{n,1,J}(t)+I_{n,1,J^c}(t).$$

Let us estimate $I_{n,1,J}(t)$ and $I_{n,1,J^c}(t)$ separately. Notice that from \eqref{defi4}, for any $t\in[0,T]$,
\begin{eqnarray}\label{ld4}
&&\!\!\!\!\!\!\!\!|I_{n,1,J}(t)|\nonumber\\
\leq&&\!\!\!\!\!\!\!\!2\int_0^T\int_{K_\varepsilon}l_2(s,z)\big(\|X^g(s)\|_{F^*_{1,2}}+1\big)\big(g_n(s,z)+g(s,z)\big)\cdot\nonumber\\
&&\!\!\!\!\!\!\!\!~~~~~~~~~~~~~~~~~~~~~~~~~~~~~~~~~~~\big(\|X^{g_n}(s)\|_{F^*_{1,2}}+\|X^g(s)\|_{F^*_{1,2}}\big)1_{A_{2,J}}(s,z)\nu(dz)ds\nonumber\\
\leq&&\!\!\!\!\!\!\!\! 2\sup_{s\in[0,T]}\Big[\big(\|X^g(s)\|_{F^*_{1,2}}+1\big)\big(\|X^{g_n}(s)\|_{F^*_{1,2}}+\|X^g(s)\|_{F^*_{1,2}}\big)\Big]\cdot\nonumber\\
&&\!\!\!\!\!\!\!\!~~~~~~~~~~~~~~~~~~~~~~~~~~~~~~~~~~\int_0^T\int_{K_\varepsilon}l_2(s,z)\big(g_n(s,z)+g(s,z)\big)1_{A_{2,J}}(s,z)\nu(dz)ds\nonumber\\
\leq &&\!\!\!\!\!\!\!\!C_{N,T}(|x|_2^2+1)\cdot\sup_{h\in S^N}\int_0^T\int_{K_\varepsilon}l_2(s,z)h(s,z)1_{A_{2,J}}(s,z)\nu(dz)ds.
\end{eqnarray}
By \eqref{eq 2} and (\ref{eq H infinit H}), we know that for $\varepsilon>0$, there exists $J_\varepsilon>0$, such that
\begin{eqnarray*}
\sup_{h\in S^N}\int_0^T\int_{K_\varepsilon}l_2(s,z)h(s,z)1_{\{l_2(s,z)\geq J_\varepsilon\}}\nu(dz)ds\leq \frac{\varepsilon}{C_{N,T}(|x|_2^2+1)}.
\end{eqnarray*}
If we choose $J$ in \eqref{ld4} to be $J_\epsilon$, then \eqref{ld4} yields
\begin{eqnarray}\label{In11}
\sup_{t\in[0,T]}|I_{n,1,J_\varepsilon}(t)|\leq\varepsilon.
\end{eqnarray}
Substituting \eqref{Qn1}-\eqref{In11} into \eqref{convergence1}, we get
\begin{eqnarray}\label{ld4.1}
&&\!\!\!\!\!\!\!\!\sup_{t\in[0,T]}\|X^{g_n}(t)-X^g(t)\|^2_{F^*_{1,2}}\nonumber\\
\leq&&\!\!\!\!\!\!\!\!C_{l_1,T,N}\cdot\sup_{t\in[0,T]}\Big(|I_{n,1,J_\varepsilon}(t)|+|I_{n,1,J_\varepsilon^c}(t)|+|I_{n,2}(t)|\Big)\nonumber\\
\leq&&\!\!\!\!\!\!\!\!C_{l_1,T,N}\cdot\sup_{t\in[0,T]}\Big(\varepsilon+|I_{n,1,J_\varepsilon^c}(t)|+C_{N,T}(|x|_2^2+1)\cdot\varepsilon\Big).
\end{eqnarray}

To estimate $|I_{n,1,J_\varepsilon^c}(t)|$, denote
\begin{eqnarray*}
U^n(s)=X^{g_n}(s)-X^g(s),~~~U^n(\bar{s}_m)=X^{g_n}(\bar{s}_m)-X^g(\bar{s}_m),
\end{eqnarray*}
where
\begin{eqnarray*}
\bar{s}_m=t_{k+1}\equiv(k+1)T\cdot2^{-m},~~\text{for}~~s\in[kT2^{-m}, (k+1)T2^{-m}).
\end{eqnarray*}
Then
\begin{eqnarray}\label{In1}
\sup_{t\in[0,T]}|I_{n,1,J^c_\varepsilon}(t)|\leq\sum_{i=1}^4\tilde{I_i},
\end{eqnarray}
where
\begin{eqnarray*}
&&\!\!\!\!\!\!\!\!\tilde{I_1}=\!\!\sup_{t\in[0,T]}\Big|\!\!\int_0^t\!\!\int_{K_\varepsilon}\!\!\big\langle\!f(s,X^g(s),z)\big(g_n(s,z)-g(s,z)\big),U^n(s)-U^n(\bar{s}_m)\big\rangle_{F^*_{1,2}}1_{A^c_{2,J_\varepsilon}}(s,z)\nu(dz)ds\Big|,\\
&&\!\!\!\!\!\!\!\!\tilde{I_2}=\!\!\sup_{t\in[0,T]}\Big|\!\!\int_0^t\!\!\int_{K_\varepsilon}\!\!\big\langle\!\big(f(s,X^g(s),z)-f(s,X^g(\bar{s}_m),z)\big)\big(g_n(s,z)-g(s,z)\big),\nonumber\\
&&\!\!\!\!\!\!\!\!~~~~~~~~~~~~~~~~~~~~~~~~~~~~~~~~~~~~~~~~~~~~~~~~~~~~~~~~~~~~~~~~~~~~~~~~~~~~U^n(\bar{s}_m)\big\rangle_{F^*_{1,2}}1_{A^c_{2,J_\varepsilon}}(s,z)\nu(dz)ds\Big|,\\
&&\!\!\!\!\!\!\!\!\tilde{I_3}=\!\!\sup_{1\leq k\leq2^m}\!\sup_{t_{k-1}\leq t\leq t_k}\!\Big|\!\!\int_{t_{k-1}}^t\!\!\int_{K_\varepsilon}\!\!\big\langle\!\!f(s,X^g(\bar{s}_m),z)\big(g_n(s,z)-g(s,z)\big),U^n(\bar{s}_m)\big\rangle_{F^*_{1,2}}\!1_{A^c_{2,J_\varepsilon}}(s,z)\nu(dz)ds\Big|,\\
&&\!\!\!\!\!\!\!\!\tilde{I_4}=\!\!\sum_{k=1}^{2^m}\Big|\!\!\int_{t_{k-1}}^{t_k}\!\!\int_{K_\varepsilon}\!\!\big\langle\!f(s,X^g(\bar{s}_m),z)\big(g_n(s,z)-g(s,z)\big),U^n(\bar{s}_m)\big\rangle_{F^*_{1,2}}1_{A^c_{2,J_\varepsilon}}(s,z)\nu(dz)ds\Big|.
\end{eqnarray*}
Notice that $\tilde{I_i}, i=1,...,4$ are all dependent on $n,m,\epsilon,$ so to shorten the notation, we omit these parameters.

Now, let us estimate $\tilde{I_i}, i=1,2,3,4$.
\begin{eqnarray}\label{ld5}
\tilde{I_1}&&\!\!\!\!\!\!\!\!\leq\int_0^T\!\!\int_{K_\varepsilon}\!\!l_2(s,z)1_{A^c_{2,J_\varepsilon}}(s,z)\big(\|X^g(s)\|_{F^*_{1,2}}+1\big)\|U^n(s)-U^n(\bar{s}_m)\|_{F^*_{1,2}}\big(g_n(s,z)+g(s,z)\big)\nu(dz)ds\nonumber\\
&&\!\!\!\!\!\!\!\!\leq \Big(\sup_{s\in[0,T]}\|X^g(s)\|_{F^*_{1,2}}+1\Big)J_\varepsilon\int_0^T\!\!\int_{K_\varepsilon}\!\!\|U^n(s)-U^n(\bar{s}_m)\|_{F^*_{1,2}}\big(g_n(s,z)+g(s,z)\big)\nu(dz)ds.
\end{eqnarray}
Recall that from \cite[Remark 3.3]{BCD}, for any $a, b\in (0,\infty)$ and $\sigma\in [1,\infty)$,
\begin{eqnarray}\label{ld6}
ab\leq e^{\sigma a}+\frac{1}{\sigma}(b\log b-b+1)=e^{\sigma a}+\frac{1}{\sigma}\Phi(b),
\end{eqnarray}
where $\Phi$ is defined as in \eqref{l}.
We choose $a=1$ and $b=g_n(s,z)$ or $g(s,z)$,
so by \eqref{sn} and \eqref{defi4},
\eqref{ld5} can be estimated by
\begin{eqnarray}\label{ld7}
\tilde{I_1}&&\!\!\!\!\!\!\!\!\leq \Big(\sup_{s\in[0,T]}\|X^g(s)\|_{F^*_{1,2}}+1\Big)J_\varepsilon\cdot\!\!\int_0^T\!\!\int_{K_\varepsilon}\!\!\|U^n(s)-U^n(\bar{s}_m)\|_{F^*_{1,2}}\cdot\nonumber\\
&&\!\!\!\!\!\!\!\!~~~~~~~~~~~~~~~~~~~~~~~~~~~~~~~~~~~~~~~~~~~~~~~~~~~~~\Big(2e^{\sigma}+\frac{1}{\sigma}\Phi\big(g_n(s,z)\big)+\frac{1}{\sigma}\Phi\big(g(s,z)\big)\Big)\nu(dz)ds\nonumber\\
&&\!\!\!\!\!\!\!\!\leq \Big(\sup_{s\in[0,T]}\|X^g(s)\|_{F^*_{1,2}}+1\Big)J_\varepsilon \cdot2e^{\sigma}\int_0^T\int_{K_\varepsilon}\|U^n(s)-U^n(\bar{s}_m)\|_{F^*_{1,2}}\nu(dz)ds\nonumber\\
&&\!\!\!\!\!\!\!\!~~+\frac{4}{\sigma}\Big(\sup_{s\in[0,T]}\|X^g(s)\|_{F^*_{1,2}}+1\Big)J_\varepsilon\cdot\sup_{s\in[0,T]}\big(\|X^{g_n}(s)\|_{F^*_{1,2}}+\|X^{g}(s)\|_{F^*_{1,2}}\big)\cdot\nonumber\\
&&\!\!\!\!\!\!\!\!~~~~~~~~~~~~~~~~~~~~~~~~~~~~~~~~~~~~~~~~~~~~~~~~~~~~~~~~~~~~~~~~~~~~\sup_{h\in S^N}\int_0^T\int_{K_\varepsilon}\Phi\big(h(s,z)\big)\nu(dz)ds\nonumber\\
&&\!\!\!\!\!\!\!\!\leq \Big(\sup_{s\in[0,T]}\|X^g(s)\|_{F^*_{1,2}}+1\Big)J_\varepsilon\cdot2e^{\sigma}\nu(K_\varepsilon)\cdot\sqrt{T}\cdot\nonumber\\
&&\!\!\!\!\!\!\!\!~~~~~~~~~~~~~~~~\Big[\big(\int_0^T\|X^{g_n}(s)-X^{g_n}(\bar{s}_m)\|_{F^*_{1,2}}^2ds\big)^{\frac{1}{2}}+\big(\int_0^T\|X^{g}(s)-X^{g}(\bar{s}_m)\|_{F^*_{1,2}}^2ds\big)^{\frac{1}{2}}\Big]\nonumber\\
&&\!\!\!\!\!\!\!\!~~+8\Big(\sup_{s\in[0,T]}\sup_{\hbar\in S^N}\|X^{\hbar}(s)\|_{F^*_{1,2}}+1\Big)^2J_\varepsilon \cdot \frac{N}{\sigma}\nonumber\\
&&\!\!\!\!\!\!\!\!\leq C_{N,T}(|x|_2^2+1)J_\varepsilon\cdot e^{\sigma}\nu(K_\varepsilon)\cdot\nonumber\\
&&\!\!\!\!\!\!\!\!~~~~~~~~~~~~~~~~\Big[\big(\int_0^T\|X^{g_n}(s)-X^{g_n}(\bar{s}_m)\|_{F^*_{1,2}}^2ds\big)^{\frac{1}{2}}+\big(\int_0^T\|X^{g}(s)-X^{g}(\bar{s}_m)\|_{F^*_{1,2}}^2ds\big)^{\frac{1}{2}}\Big]\nonumber\\
&&\!\!\!\!\!\!\!\!~~+C_{N,T}(|x|_2^2+1)J_\varepsilon \cdot \frac{1}{\sigma}
.
\end{eqnarray}

To estimate $\int_0^T\|X^{g_n}(s)-X^{g_n}(\bar{s}_m)\|_{F^*_{1,2}}^2ds$, notice that from \eqref{ld3} we know
\begin{eqnarray}\label{5.10}
X^{g_n}(\bar{s}_m)-X^{g_n}(s)-\int_s^{\bar{s}_m}\!\!L\Psi(X^{g_n}(t))dt=\int_s^{\bar{s}_m}\!\!\int_Z\!\!f(t,X^{g_n}(t),z)\big(g_n(t,z)-1\big)\nu(dz)dt.
\end{eqnarray}
Applying the chain rule to $\|X^{g_n}(\bar{s}_m)-X^{g_n}(s)\|^2_{F^*_{1,2}}$, we get
\begin{eqnarray}\label{5.11}
&&\!\!\!\!\!\!\!\!\|X^{g_n}(\bar{s}_m)-X^{g_n}(s)\|^2_{F^*_{1,2}}\nonumber\\
&&\!\!\!\!\!\!\!\!+2\int_s^{\bar{s}_m}~ _{(L^2(\mu))^*}\big\langle(1-L)\big(\Psi(X^{g_n}(t))\big),X^{g_n}(t)-X^{g_n}(s)\big\rangle_{L^2(\mu)}dt\nonumber\\
=&&\!\!\!\!\!\!\!\!2\int_s^{\bar{s}_m}\big\langle\Psi(X^{g_n}(t)),X^{g_n}(t)-X^{g_n}(s)\big\rangle_{F^*_{1,2}}dt\nonumber\\
&&\!\!\!\!\!\!\!\!+2\int_s^{\bar{s}_m}\big\langle \int_Zf(t,X^{g_n}(t),z)\big(g_n(t,z)-1\big)\nu(dz),X^{g_n}(t)-X^{g_n}(s)\big\rangle_{F^*_{1,2}}dt.
\end{eqnarray}
Integrating \eqref{5.11} over $[0,T]$ with respect to $s$, we obtain
\begin{eqnarray}\label{5.12}
&&\!\!\!\!\!\!\!\!\int_0^T\|X^{g_n}(\bar{s}_m)-X^{g_n}(s)\|^2_{F^*_{1,2}}ds\nonumber\\
&&\!\!\!\!\!\!\!\!+2\int_0^T\int_s^{\bar{s}_m}~ _{(L^2(\mu))^*}\big\langle(1-L)\big(\Psi(X^{g_n}(t))\big),X^{g_n}(t)-X^{g_n}(s)\big\rangle_{L^2(\mu)}dtds\nonumber\\
=&&\!\!\!\!\!\!\!\!2\int_0^T\int_s^{\bar{s}_m}\big\langle\Psi(X^{g_n}(t)),X^{g_n}(t)-X^{g_n}(s)\big\rangle_{F^*_{1,2}}dtds\nonumber\\
&&\!\!\!\!\!\!\!\!+2\int_0^T\int_s^{\bar{s}_m}\big\langle \int_Zf(t,X^{g_n}(t),z)\big(g_n(t,z)-1\big)\nu(dz),X^{g_n}(t)-X^{g_n}(s)\big\rangle_{F^*_{1,2}}dtds.
\end{eqnarray}
Also, we can rewrite \eqref{5.12} as
\begin{eqnarray*}
&&\!\!\!\!\!\!\!\!\int_0^T\|X^{g_n}(\bar{s}_m)-X^{g_n}(s)\|^2_{F^*_{1,2}}ds\nonumber\\
&&\!\!\!\!\!\!\!\!+2\int_0^T\int_s^{\bar{s}_m}~ _{(L^2(\mu))^*}\big\langle(1-L)\big(\Psi(X^{g_n}(t))-\Psi(X^{g_n}(s))\big),X^{g_n}(t)-X^{g_n}(s)\big\rangle_{L^2(\mu)}dtds\nonumber\\
&&\!\!\!\!\!\!\!\!+2\int_0^T\int_s^{\bar{s}_m}~ _{(L^2(\mu))^*}\big\langle(1-L)\big(\Psi(X^{g_n}(s))\big),X^{g_n}(t)-X^{g_n}(s)\big\rangle_{L^2(\mu)}dtds\nonumber\\
=&&\!\!\!\!\!\!\!\!2\int_0^T\int_s^{\bar{s}_m}\big\langle\Psi(X^{g_n}(t))-\Psi(X^{g_n}(s)),X^{g_n}(t)-X^{g_n}(s)\big\rangle_{F^*_{1,2}}dtds\nonumber\\
&&\!\!\!\!\!\!\!\!+2\int_0^T\int_s^{\bar{s}_m}\big\langle\Psi(X^{g_n}(s)),X^{g_n}(t)-X^{g_n}(s)\big\rangle_{F^*_{1,2}}dtds\nonumber\\
&&\!\!\!\!\!\!\!\!+2\int_0^T\int_s^{\bar{s}_m}\big\langle \int_Zf(t,X^{g_n}(t),z)\big(g_n(t,z)-1\big)\nu(dz),X^{g_n}(t)-X^{g_n}(s)\big\rangle_{F^*_{1,2}}dtds.
\end{eqnarray*}
By \eqref{triple2}, \eqref{defi4}, \eqref{psii}, $L^2(\mu)\subset F^*_{1,2}$ continuously and densely, Young's inequality, $\Psi(0)=0$ (see \textbf{(H1)}), and \textbf{(H2)(ii)},
we obtain
\begin{eqnarray}\label{5.13}
&&\!\!\!\!\!\!\!\!\int_0^T\|X^{g_n}(\bar{s}_m)-X^{g_n}(s)\|^2_{F^*_{1,2}}ds\nonumber\\
\leq&&\!\!\!\!\!\!\!\!2\int_0^T\int_s^{\bar{s}_m}|\Psi(X^{g_n}(s))|_2\cdot|X^{g_n}(t)-X^{g_n}(s)|_2dtds\nonumber\\
&&\!\!\!\!\!\!\!\!+\frac{1}{2\tilde{\alpha}}\int_0^T\int_s^{\bar{s}_m}\|X^{g_n}(t)-X^{g_n}(s)\|^2_{F^*_{1,2}}dtds\nonumber\\
&&\!\!\!\!\!\!\!\!+2\int_0^T\int_s^{\bar{s}_m}|\Psi(X^{g_n}(s))|_2\cdot \|X^{g_n}(t)-X^{g_n}(s)\|_{F^*_{1,2}}dtds\nonumber\\
&&\!\!\!\!\!\!\!\!+2\int_0^T\int_s^{\bar{s}_m}\int_Zl_2(t,z)\cdot\big(\|X^{g_n}(t)\|_{F^*_{1,2}}+1\big)\cdot|g_n(t,z)-1|\cdot\|X^{g_n}(t)-X^{g_n}(s)\|_{F^*_{1,2}}\nu(dz)dtds\nonumber\\
\leq&&\!\!\!\!\!\!\!\!2\int_0^T\int_s^{\bar{s}_m}Lip\Psi\cdot|X^{g_n}(s)|_2\cdot|X^{g_n}(t)-X^{g_n}(s)|_2dtds\nonumber\\
&&\!\!\!\!\!\!\!\!+\frac{1}{2\tilde{\alpha}}\int_0^T\int_s^{\bar{s}_m}|X^{g_n}(t)-X^{g_n}(s)|^2_2dtds\nonumber\\
&&\!\!\!\!\!\!\!\!+2\int_0^T\int_s^{\bar{s}_m}Lip\Psi\cdot|X^{g_n}(s)|_2\cdot |X^{g_n}(t)-X^{g_n}(s)|_2dtds\nonumber\\
&&\!\!\!\!\!\!\!\!+2\int_0^T\int_s^{\bar{s}_m}\big(|X^{g_n}(t)|_2+1\big)\cdot|X^{g_n}(t)-X^{g_n}(s)|_2\cdot\int_Zl_2(t,z)\cdot|g_n(t,z)-1|\nu(dz)dtds\nonumber\\
\leq&&\!\!\!\!\!\!\!\!C_{N,T}(|x|_2^2+1)\cdot\frac{1}{2^m}+C_{N,T}(|x|_2^2+1)\cdot\int_0^T\int_s^{\bar{s}_m}\int_Zl_2(t,z)\cdot|g_n(t,z)-1|\nu(dz)dtds\nonumber\\
\leq&&\!\!\!\!\!\!\!\!C_{N,T}(|x|_2^2+1)\cdot\frac{1}{2^m}\nonumber\\
&&\!\!\!\!\!\!\!\!+C_{N,T}(|x|_2^2+1)\cdot\sup_{\hbar\in S^N}\sup_{s\in[0,T]}\int_s^{\bar{s}_m}\int_Zl_2(t,z)\cdot|\hbar(t,z)-1|\nu(dz)dt.
\end{eqnarray}
Then, from \eqref{eq 3} and (\ref{eq H infinit H}) we know that
\begin{eqnarray}\label{5.14}
\lim_{\delta\rightarrow0}\sup_{\hbar\in S^N}\sup_{|l-s|\leq\delta}\int_s^l\int_Zl_i(t,z)|\hbar(t,z)-1|\nu(dz)dt=0,~~i=1,2,
\end{eqnarray}
so
\begin{eqnarray}\label{5.15}
\lim_{m\rightarrow\infty}\sup_{n\in \Bbb{N}}\int_0^T\|X^{g_n}(\bar{s}_m)-X^{g_n}(s)\|^2_{F^*_{1,2}}=0,
\end{eqnarray}
and similarly,
\begin{eqnarray}\label{5.17}
&&\!\!\!\!\!\!\!\!\lim_{m\rightarrow\infty}\int_0^T\|X^{g}(s)-X^{g}(\bar{s}_m)\|^2_{F^*_{1,2}}ds=0.
\end{eqnarray}
Substituting \eqref{5.15} and \eqref{5.17} into \eqref{ld7}, we know that
\begin{eqnarray}\label{I11}
\overline{\lim_{m\rightarrow\infty}}\sup_{n\in\Bbb{N}}\tilde{I_1}\leq C_{N,T}(|x|_2^2+1)J_\varepsilon\cdot\frac{1}{\sigma},
\end{eqnarray}
and since $\sigma$ is arbitrary in $[1,\infty)$, we get
\begin{eqnarray}\label{I1}
\lim_{m\rightarrow\infty}\sup_{n\in\Bbb{N}}\tilde{I_1}=0.
\end{eqnarray}

To estimate $\tilde{I_2}$, denote
$$A_{1,J}=\{(s,z)\in [0,T]\times Z:l_1(s,z)\geq J\},$$
so by \textbf{(H2)(i)}, \eqref{defi4}, and \eqref{ld6},
we have
\begin{eqnarray}\label{ld8}
\tilde{I_2}\leq&&\!\!\!\!\!\!\!\!\int_0^T\!\!\int_{K_\varepsilon}\!\!l_1(s,z)\|X^g(s)-X^g(\bar{s}_m)\|_{F^*_{1,2}}\|U^n(\bar{s}_m)\|_{F^*_{1,2}}|g_n(s,z)-g(s,z)|\nu(dz)ds\nonumber\\
=&&\!\!\!\!\!\!\!\!\int_0^T\int_{K_\varepsilon}l_1(s,z)\|X^g(s)-X^g(\bar{s}_m)\|_{F^*_{1,2}}\|U^n(\bar{s}_m)\|_{F^*_{1,2}}|g_n(s,z)-g(s,z)|1_{A_{1,J}}(s,z)\nu(dz)ds\nonumber\\
&&\!\!\!\!\!\!\!\!+\int_0^T\int_{K_\varepsilon}l_1(s,z)\|X^g(s)-X^g(\bar{s}_m)\|_{F^*_{1,2}}\|U^n(\bar{s}_m)\|_{F^*_{1,2}}|g_n(s,z)-g(s,z)|1_{A_{1,J}^c}(s,z)\nu(dz)ds\nonumber\\
\leq&&\!\!\!\!\!\!\!\! \sup_{\hbar\in S^N}\sup_{s\in[0,T]}4\|X^{\hbar}(s)\|^2_{F^*_{1,2}}\cdot\int_0^T\int_{K_\varepsilon}l_1(s,z)\big(|g_n(s,z)|+|g(s,z)|\big)1_{A_{1,J}}(s,z)\nu(dz)ds\nonumber\\
&&\!\!\!\!\!\!\!\!+J\sup_{\hbar\in S^N}\sup_{s\in[0,T]}2\|X^{\hbar}(s)\|_{F^*_{1,2}}\cdot\int_0^T\int_{K_\varepsilon}\|X^g(s)-X^g(\bar{s}_m)\|_{F^*_{1,2}}\big(|g_n(s,z)|+|g(s,z)|\big)\nu(dz)ds\nonumber\\
\leq&&\!\!\!\!\!\!\!\!C_{N,T}(|x|_2^2+1)\sup_{h\in S^N}\int_0^T\int_{K_\varepsilon}l_1(s,z)h(s,z)1_{A_{1,J}}(s,z)\nu(dz)ds\nonumber\\
&&\!\!\!\!\!\!\!\!+JC_{N,T}(|x|_2^2+1)\int_0^T\int_{K_\varepsilon}\|X^g(s)-X^g(\bar{s}_m)\|_{F^*_{1,2}}\cdot\nonumber\\
&&\!\!\!\!\!\!\!\!~~~~~~~~~~~~~~~~~~~~~~~~~~~~~~~~~~~~~~~~~~~~~~~~~~~~~~~~~~\big(2e^{\sigma}+\frac{1}{\sigma}\Phi(g_n(s,z))+\frac{1}{\sigma}\Phi(g(s,z))\big)\nu(dz)ds\nonumber\\
\leq&&\!\!\!\!\!\!\!\!C_{N,T}(|x|_2^2+1)\sup_{h\in S^N}\int_0^T\int_{K_\varepsilon}l_1(s,z)h(s,z)1_{A_{1,J}}(s,z)\nu(dz)ds\nonumber\\
&&\!\!\!\!\!\!\!\!+JC_{N,T}(|x|_2^2+1)e^{\sigma}\nu(K_\varepsilon)\int_0^T\|X^g(s)-X^g(\bar{s}_m)\|_{F^*_{1,2}}ds\nonumber\\
&&\!\!\!\!\!\!\!\!+JC_{N,T}(|x|_2^2+1)\int_0^T\int_{K_\varepsilon}\frac{1}{\sigma}\Phi(g_n(s,z))+\frac{1}{\sigma}\Phi(g(s,z))\nu(dz)ds.
\end{eqnarray}
For the first term in the right hand-side of \eqref{ld8}, from \eqref{eq 4} we know that for the fixed $K_\varepsilon$ and any $\eta>0$, there exists $J_\eta>0$ such that
\begin{eqnarray}\label{5.18}
\sup_{h\in S^N}\int_0^T\int_{K_\varepsilon}l_1(s,z)h(s,z)1_{A_{1,J_\eta}}(s,z)\nu(dz)ds\leq\eta.
\end{eqnarray}
Fix the above  $J_\eta$.
The second term in the right hand-side of \eqref{ld8} is dominated by
\begin{eqnarray}\label{5.19}
\leq&&\!\!\!\!\!\!\!\!J_\eta C_{N,T}(|x|_2^2+1) e^\sigma\nu(K_\varepsilon)\Big(\int_0^T\|X^g(s)-X^g(\bar{s}_m)\|^2_{F^*_{1,2}}ds\Big)^{\frac{1}{2}}.
\end{eqnarray}
From \eqref{sn}, the third term in the right hand-side of \eqref{ld8} is  dominated by
\begin{eqnarray}\label{5.20}
\leq&&\!\!\!\!\!\!\!\!C_{N,T}(|x|^2_2+1)J_\eta\cdot\frac{1}{\sigma}.
\end{eqnarray}
Taking \eqref{5.18}-\eqref{5.20} into \eqref{ld8}, and since $\sigma$ is arbitrary in $[1,\infty)$, by \eqref{5.17},
\begin{eqnarray*}
\overline{\lim_{m\rightarrow\infty}}\sup_{n\in\Bbb{N}}\tilde{I_2}\leq C_{N,T}(|x|_2^2+1)\eta.
\end{eqnarray*}
Since $\eta$ is arbitrary in $(0,\infty)$, we obtain
\begin{eqnarray}\label{I2}
\lim_{m\rightarrow\infty}\sup_{n\in\Bbb{N}}\tilde{I_2}=0.
\end{eqnarray}

By \textbf{(H2)(ii)} and \eqref{defi4},
\begin{eqnarray}\label{ld9}
\tilde{I_3}\leq&&\!\!\!\!\!\!\!\!\sup_{1\leq k\leq 2^m}\sup_{t_{k-1}\leq t\leq t_k}\int_{t_{k-1}}^t\int_{K_\varepsilon}l_2(s,z)\big(\|X^g(\bar{s}_m)\|_{F^*_{1,2}}+1\big)\cdot\|U^n(\bar{s}_m)\|_{F^*_{1,2}}\cdot\nonumber\\
&&\!\!\!\!\!\!\!\!~~~~~~~~~~~~~~~~~~~~~~~~~~~~~~~~~~~~~~~~~~~~~~~~~\big(|g_n(s,z)-1|+|g(s,z)-1|\big)\nu(dz)ds\nonumber\\
\leq&&\!\!\!\!\!\!\!\!C_{N,T}(|x|_2^2+1)\sup_{h\in S^N}\sup_{1\leq k\leq 2^m}\sup_{t_{k-1}\leq t\leq t_k}\int_{t_{k-1}}^t\int_{K_\varepsilon}l_2(s,z)\cdot|h(s,z)-1|\nu(dz)ds,
\end{eqnarray}
so applying \eqref{5.14}, we get
\begin{eqnarray}\label{I3}
\overline{\lim_{m\rightarrow\infty}}\sup_{n\in \Bbb{N}}\tilde{I_3}=0.
\end{eqnarray}

To estimate $\tilde{I_4}$, for all $(s,z)\in[0,T]\times Z$, we denote
$$f_{m,n}(s,z)=\langle f(s,X^g(\bar{s}_m),z),U^n(\bar{s}_m)\rangle_{F^*_{1,2}}1_{A^c_{2,J_\varepsilon}}(s,z).$$
Since $U^n(\bar{s}_m)=X^{g_n}(\bar{s}_m)-X^g(\bar{s}_m)$, $L^2(\mu)\subset F^*_{1,2}$ continuously and densely, and \eqref{defi4}, we know that
$$\sup_{n\in\Bbb{N}}\sup_{s\in[0,T]}\|U^n(\bar{s}_m)\|_{F^*_{1,2}}\leq \sup_{n\in\Bbb{N}}\sup_{s\in[0,T]}|U^n(\bar{s}_m)|_2\leq2\sup_{\hbar\in S^N}\sup_{t\in[0,T]}|X^\hbar(t)|_2\leq\sqrt{C_{N,T}(|x|_2^2+1)}<\infty.$$
Fix $m\in\mathbb{N}$ and $k=1,2,...,2^m$. For any $s\in[t_{k-1}, t_k)$,  $\{U^n(\bar{s}_m)=U^n(t_k)\}_{n\geq1}$ is weakly compact in $(F^*_{1,2}, \|\cdot\|_{F^*_{1,2}})$, hence there exists a subsequence, still denoted later by $ U^n(\bar{s}_m)$ and $U_k\in F^*_{1,2}$, such that for all $\kappa\in F^*_{1,2}$, we have
$$\lim_{n\rightarrow\infty}\langle \kappa, U^n(\bar{s}_m)\rangle_{F^*_{1,2}}=\langle \kappa, U_k\rangle_{F^*_{1,2}},~~\text{and}~~\|U_k\|_{F^*_{1,2}}\leq \sqrt{C_{N,T}(|x|_2^2+1)}<\infty.$$
Therefore, on $[t_{k-1},t_k)\times K_\varepsilon$,
\begin{eqnarray}\label{eq Zhai 6}
\lim_{n\rightarrow\infty}f_{m,n}(s,z)=\langle f(s,X^g(\bar{s}_m),z), U_k\rangle_{F^*_{1,2}}1_{A^c_{2,J_\varepsilon}}(s,z):=f_m(s,z),~~\nu(dz)ds\text{-}a.s.,
\end{eqnarray}
and
\begin{eqnarray}
|f_{m,n}(s,z)|\!\leq\! l_2(s,z)\big(\|X^g(\bar{s}_m)\|_{F^*_{1,2}}\!\!+1\big)\|U^n(\bar{s}_m)\|_{F^*_{1,2}}1_{A^c_{2,J_\varepsilon}}\!\!(s,z)\!\leq \! J_\varepsilon C_{N,T}(|x|_2^2+1)\!<\!\infty,
\end{eqnarray}
which imply that
\begin{eqnarray}\label{eq Zhai 6-1}
|f_{m}(s,z)|\leq J_\varepsilon C_{N,T}(|x|_2^2+1)<\infty.
\end{eqnarray}

Recall $\nu_T=\lambda_T\otimes\nu$ and $\nu_T^g$ introduced in (\ref{eq Zhai 5}). Similar to \cite[page:269, Lemma 10.9]{BDLDP}, we can assume without loss of generality that $\nu_T([t_{k-1},t_k)\times K_\varepsilon)\neq0$, $\nu_T\big(\partial([t_{k-1},t_k)\times K_\varepsilon)\big)=0$, and
$$m_{n,\varepsilon}:=\int_{t_{k-1}}^{t_k}\int_{K_\varepsilon}g_n(s,z)\nu(dz)ds\neq0,~~\text{and}~~m_\varepsilon:=\int_{t_{k-1}}^{t_k}\int_{K_\varepsilon}g(s,z)\nu(dz)ds\neq0.$$
Define probability measures $\tilde{\nu}_{n,\varepsilon}$, $\tilde{\nu}_\varepsilon$, and $\theta_\varepsilon$ on $([t_{k-1},t_k)\times K_\varepsilon, \mathcal{B}([t_{k-1},t_k))\otimes\mathcal{B}(K_\varepsilon))$ as follows:
\begin{eqnarray*}
\tilde{\nu}_{n,\varepsilon}(\cdot)&&\!\!\!\!\!\!\!\!=\frac{1}{m_{n,\varepsilon}}\nu_T^{g_n}(\cdot\cap([t_{k-1},t_k)\times K_\varepsilon)),\nonumber\\
\tilde{\nu}_\varepsilon(\cdot)&&\!\!\!\!\!\!\!\!=\frac{1}{m_\varepsilon}\nu_T^g(\cdot\cap([t_{k-1},t_k)\times K_\varepsilon)),\nonumber\\
\theta_\varepsilon(\cdot)&&\!\!\!\!\!\!\!\!=\frac{\nu_T(\cdot\cap([t_{k-1},t_k)\times K_\varepsilon))}{\nu_T([t_{k-1},t_k)\times K_\varepsilon))}.
\end{eqnarray*}
Recall the topology of $S^N$; see the discussion around (\ref{eq Zhai 5}).
Since $g_n\rightarrow g$ as $n\rightarrow\infty$ in $S^N$, we have that
\begin{eqnarray}\label{m}
\lim_{n\rightarrow\infty}m_{n,\varepsilon}=m_\varepsilon,
\end{eqnarray}
\begin{eqnarray}\label{eq Zhai 7}
\tilde{\nu}_{n,\varepsilon}~\text{converges~weakly~to}~\tilde{\nu}_\varepsilon~\text{as}~n\rightarrow\infty.
\end{eqnarray}
Also, there exists a constant $\alpha_\varepsilon$ such that the relative entropy function
\begin{eqnarray}\label{eq Zhai 8}
\sup_{n\geq1}R(\tilde{\nu}_{n,\varepsilon}\|\theta_\varepsilon)=&&\!\!\!\!\!\!\!\!\sup_{n\geq1}\int_{t_{k-1}}^{t_k}\int_{K_\varepsilon}\log\Big(\frac{\nu_T([t_{k-1},t_k)\times K_\varepsilon)}{m_{n,\varepsilon}}g_n(s,z)\Big)\frac{1}{m_{n,\varepsilon}}g_n(s,z)\nu(dz)ds\nonumber\\
=&&\!\!\!\!\!\!\!\!\sup_{n\geq1}\Big(\frac{1}{m_{n,\varepsilon}}\int_{t_{k-1}}^{t_k}\int_{K_\varepsilon}\big(\Phi(g_n(s,z))+g_n(s,z)-1\big)\nu(dz)ds\nonumber\\
&&\!\!\!\!\!\!\!\!+\log\frac{\nu_T([t_{k-1},t_k)\times K_\varepsilon)}{m_{n,\varepsilon}}\Big)\nonumber\\
\leq&&\!\!\!\!\!\!\!\!\sup_{n\geq1}\Big(\frac{N}{m_{n,\varepsilon}}+1-\frac{\nu_T([t_{k-1},t_k)\times K_\varepsilon)}{m_{n,\varepsilon}}+\log\frac{\nu_T([t_{k-1},t_k)\times K_\varepsilon)}{m_{n,\varepsilon}}\Big)\nonumber\\
\leq&&\!\!\!\!\!\!\!\!\alpha_\varepsilon<\infty.
\end{eqnarray}

By (\ref{eq Zhai 6})--(\ref{eq Zhai 6-1}), (\ref{eq Zhai 7}), and (\ref{eq Zhai 8}), applying \cite[Lemma 2.8]{BD}, we have
\begin{eqnarray*}
&&\!\!\!\!\!\!\!\!(a)~~\lim_{n\rightarrow\infty}\int_{t_{k-1}}^{t_k}\int_{K_\varepsilon}f_{m,n}(s,z)\tilde{\nu}_{n,\varepsilon}(dzds)=\int_{t_{k-1}}^{t_k}\int_{K_\varepsilon}f_{m}(s,z)\tilde{\nu}_\varepsilon(dzds),\nonumber\\
&&\!\!\!\!\!\!\!\!(b)~~\lim_{n\rightarrow\infty}\int_{t_{k-1}}^{t_k}\int_{K_\varepsilon}f_{m,n}(s,z)\tilde{\nu}_{\varepsilon}(dzds)=\int_{t_{k-1}}^{t_k}\int_{K_\varepsilon}f_m(s,z)\tilde{\nu}_\varepsilon(dzds),
\end{eqnarray*}
i.e.,
\begin{eqnarray*}
(a')~~&&\!\!\!\!\!\!\!\!\lim_{n\rightarrow\infty}\int_{t_{k-1}}^{t_k}\int_{K_\varepsilon}\big\langle f(s,X^g(\bar{s}_m),z), U^n(\bar{s}_m)\big\rangle_{F^*_{1,2}}1_{A^c_{2,J_\varepsilon}}(s,z)\frac{1}{m_{n,\varepsilon}}g_n(s,z)\nu(dz)ds\nonumber\\
=&&\!\!\!\!\!\!\!\!\int_{t_{k-1}}^{t_k}\int_{K_\varepsilon}\big\langle f(s,X^g(\bar{s}_m),z), U_k\big\rangle_{F^*_{1,2}}1_{A^c_{2,J_\varepsilon}}(s,z)\frac{1}{m_\varepsilon}g(s,z)\nu(dz)ds,\nonumber\\
(b')~~&&\!\!\!\!\!\!\!\!\lim_{n\rightarrow\infty}\int_{t_{k-1}}^{t_k}\int_{K_\varepsilon}\big\langle f(s,X^g(\bar{s}_m),z), U^n(\bar{s}_m)\big\rangle_{F^*_{1,2}}1_{A^c_{2,J_\varepsilon}}(s,z)\frac{1}{m_{\varepsilon}}g(s,z)\nu(dz)ds\nonumber\\
=&&\!\!\!\!\!\!\!\!\int_{t_{k-1}}^{t_k}\int_{K_\varepsilon}\big\langle f(s,X^g(\bar{s}_m),z), U_k\big\rangle_{F^*_{1,2}}1_{A^c_{2,J_\varepsilon}}(s,z)\frac{1}{m_\varepsilon}g(s,z)\nu(dz)ds.
\end{eqnarray*}
Therefore, taking \eqref{m} into account, we get
\begin{eqnarray}\label{I4}
\lim_{n\rightarrow\infty}\Big|\int_{t_{k-1}}^{t_k}\!\!\int_{K_\varepsilon}\langle f(s,X^g(\bar{s}_m),z),U^n(\bar{s}_m)\rangle_{F^*_{1,2}}\big(g_n(s,z)-g(s,z)\big)1_{A^c_{2,J_\varepsilon}}(s,z)\nu(dz)ds\Big|=0.
\end{eqnarray}

From \eqref{I1}, \eqref{I2}, and \eqref{I3}, we know that for any $\kappa>0$, there exists $m_\kappa>0$ such that for all $m\geq m_\kappa$
\begin{eqnarray}\label{I5}
\sum_{i=1}^3\sup_{n\in\mathbb{N}}\tilde{I_{i}}\leq \kappa.
\end{eqnarray}
For the fixed $\kappa$ and $m_\kappa$ as above, \eqref{I4} implies that
\begin{eqnarray}\label{I6}
\lim_{n\rightarrow\infty}\tilde{I_4}=0.
\end{eqnarray}
Taking \eqref{I5} and \eqref{I6} into account, from \eqref{In1}, we get
\begin{eqnarray*}
\lim_{n\rightarrow\infty}\sup_{t\in[0,T]}|I_{n,1,J^c_\varepsilon}(t)|\leq\kappa,
\end{eqnarray*}
and since $\kappa$ is arbitrary in $(0,\infty)$,
\begin{eqnarray}\label{In12}
\lim_{n\rightarrow\infty}\sup_{t\in[0,T]}|I_{n,1,J^c_\varepsilon}(t)|=0.
\end{eqnarray}
Taking \eqref{In12} into account, from \eqref{ld4.1}, we know that
\begin{eqnarray*}
\lim_{n\rightarrow\infty}\sup_{t\in[0,T]}\|X^{g_n}(t)-X^g(t)\|^2_{F^*_{1,2}}
\leq C_{l_1,T,N}\cdot\Big(\varepsilon+C_{N,T}(|x|_2^2+1)\varepsilon\Big),
\end{eqnarray*}
and since $\varepsilon$ is arbitrary in $(0,\infty)$, it follows that
\begin{eqnarray}\label{a}
X^{g_n}\rightarrow X^g~~\text{in}~~D([0,T];F^*_{1,2}),
\end{eqnarray}
which indicates (a) in Condition \ref{Condition}.

\end{proof}

\subsection{Proof of (b) in Condition \ref{Condition}}\label{Subsection5.2}

\begin{proof}{\bf Proof of (b) in Condition \ref{Condition}}

Let $N<\infty$, $\{\varphi_\epsilon\}_{\epsilon>0}\subset\widetilde{\mathcal{A}}^N$. From Theorem \ref{Th2}, we know that $Y^{\varphi_\epsilon}:=\mathcal{G}^0(\varphi_\epsilon)$ is the unique solution of the following equation:
\begin{eqnarray}\label{ldp2}
Y^{\varphi_\epsilon}(t)=x+L\int_0^t\Psi(Y^{\varphi_\epsilon}(s))ds+\int_0^t\int_Zf(s,Y^{\varphi_\epsilon}(s),z)\big(\varphi_\epsilon(s,z)-1\big)\nu(dz)ds.
\end{eqnarray}
From \eqref{ldp1} and \eqref{ldp2}, we can get
\begin{eqnarray}\label{LDP1}
X^{\varphi_\epsilon}(t)-Y^{\varphi_\epsilon}(t)=&&\!\!\!\!\!\!\!\!L\int_0^t\Psi(X^{\varphi_\epsilon}(s))-\Psi(Y^{\varphi_\epsilon}(s))ds+\epsilon\int_0^t\int_Zf(s,X^{\varphi_\epsilon}(s-),z)\widetilde{N}^{\epsilon^{-1}\varphi_\epsilon}(dz,ds)\nonumber\\
&&\!\!\!\!\!\!\!\!+\int_0^t\int_Z\big(f(s,X^{\varphi_\epsilon}(s),z)-f(s,Y^{\varphi_\epsilon}(s),z)\big)\big(\varphi_\epsilon(s,z)-1\big)\nu(dz)ds.
\end{eqnarray}
Rewrite \eqref{LDP1} in the following form:
\begin{eqnarray}\label{LDP2}
&&\!\!\!\!\!\!\!\!X^{\varphi_\epsilon}(t)-Y^{\varphi_\epsilon}(t)+(1-L)\int_0^t\Psi(X^{\varphi_\epsilon}(s))-\Psi(Y^{\varphi_\epsilon}(s))ds\nonumber\\
=&&\!\!\!\!\!\!\!\!\int_0^t\Psi(X^{\varphi_\epsilon}(s))-\Psi(Y^{\varphi_\epsilon}(s))ds+\epsilon\int_0^t\int_Zf(s,X^{\varphi_\epsilon}(s-),z)\widetilde{N}^{\epsilon^{-1}\varphi_\epsilon}(dz,ds)\nonumber\\
&&\!\!\!\!\!\!\!\!+\int_0^t\int_Z\big(f(s,X^{\varphi_\epsilon}(s),z)-f(s,Y^{\varphi_\epsilon}(s),z)\big)\big(\varphi_\epsilon(s,z)-1\big)\nu(dz)ds.
\end{eqnarray}

Applying It\^{o}'s formula to $\|X^{\varphi_\epsilon}(t)-Y^{\varphi_\epsilon}(t)\|^2_{F^*_{1,2}}$, we get
\begin{eqnarray}\label{LDP3}
&&\!\!\!\!\!\!\!\!\|X^{\varphi_\epsilon}(t)-Y^{\varphi_\epsilon}(t)\|^2_{F^*_{1,2}}\nonumber\\
&&\!\!\!\!\!\!\!\!+2\int_0^t~_{(L^2(\mu))^*}\big\langle (1-L)\big(\Psi(X^{\varphi_\epsilon}(s))-\Psi(Y^{\varphi_\epsilon}(s))\big),X^{\varphi_\epsilon}(s)-Y^{\varphi_\epsilon}(s)\big\rangle_{L^2(\mu)}ds\nonumber\\
=&&\!\!\!\!\!\!\!\!2\int_0^t\big\langle\Psi(X^{\varphi_\epsilon}(s))-\Psi(Y^{\varphi_\epsilon}(s)),X^{\varphi_\epsilon}(s)-Y^{\varphi_\epsilon}(s)\big\rangle_{F^*_{1,2}}ds\nonumber\\
&&\!\!\!\!\!\!\!\!+2\epsilon\int_0^t\int_Z\big\langle f(s,X^{\varphi_\epsilon}(s-),z),X^{\varphi_\epsilon}(s-)-Y^{\varphi_\epsilon}(s-)\big\rangle_{F^*_{1,2}}\widetilde{N}^{\epsilon^{-1}\varphi_\epsilon}(dz,ds)\nonumber\\
&&\!\!\!\!\!\!\!\!+2\int_0^t\int_Z\big\langle\big(f(s,X^{\varphi_\epsilon}(s),z)-f(s,Y^{\varphi_\epsilon}(s),z)\big)\big(\varphi_\epsilon(s,z)-1\big),X^{\varphi_\epsilon}(s)-Y^{\varphi_\epsilon}(s)\big\rangle_{F^*_{1,2}}\nu(dz)ds\nonumber\\
&&\!\!\!\!\!\!\!\!+\epsilon^2\int_0^t\int_Z\|f(s,X^{\varphi_\epsilon}(s-),z)\|^2_{F^*_{1,2}}N^{\epsilon^{-1}\varphi_\epsilon}(dz,ds).
\end{eqnarray}
In a manner similar to the method by which we got \eqref{5.01} and \eqref{5.02}, we know that the second term in the left hand-side of \eqref{LDP3} is
\begin{eqnarray}\label{LDP4}
\geq&&\!\!\!\!\!\!\!\!2\tilde{\alpha}\int_0^t|\Psi(X^{\varphi_\epsilon}(s))-\Psi(Y^{\varphi_\epsilon}(s))|_2^2ds,
\end{eqnarray}
and the first term in the right hand-side of \eqref{LDP3} is dominated by
\begin{eqnarray}\label{LDP5}
\leq&&\!\!\!\!\!\!\!\!2\tilde{\alpha}\int_0^t|\Psi(X^{\varphi_\epsilon}(s))-\Psi(Y^{\varphi_\epsilon}(s))|_2^2ds+\frac{1}{2\tilde{\alpha}}\int_0^t\|X^{\varphi_\epsilon}(s)-Y^{\varphi_\epsilon}(s)\|^2_{F^*_{1,2}}ds.
\end{eqnarray}

For the third term in the right hand-side of \eqref{LDP3}, by \textbf{(H2)(i)}, we get
\begin{eqnarray}\label{LDP9}
&&\!\!\!\!\!\!\!\!2\int_0^t\int_Z\big\langle\big(f(s,X^{\varphi_\epsilon}(s),z)-f(s,Y^{\varphi_\epsilon}(s),z)\big)\big(\varphi_\epsilon(s,z)-1\big),X^{\varphi_\epsilon}(s)-Y^{\varphi_\epsilon}(s)\big\rangle_{F^*_{1,2}}\nu(dz)ds\nonumber\\
\leq&&\!\!\!\!\!\!\!\!2\int_0^t\int_Zl_1(s,z)|\varphi_\epsilon(s,z)-1|\cdot\|X^{\varphi_\epsilon}(s)-Y^{\varphi_\epsilon}(s)\|^2_{F^*_{1,2}}\nu(dz)ds\nonumber\\
:=&&\!\!\!\!\!\!\!\!2\int_0^th_{1,\epsilon}(s)\|X^{\varphi_\epsilon}(s)-Y^{\varphi_\epsilon}(s)\|^2_{F^*_{1,2}}ds,
\end{eqnarray}
where $h_{1,\epsilon}(s)=\int_Zl_1(s,z)|\varphi_\epsilon(s,z)-1|\nu(dz)$, and from \eqref{h0}, we know that
\begin{eqnarray}\label{LDP10}
\int_0^Th_{1,\epsilon}(s)ds\leq C_{l_1,N}<\infty.
\end{eqnarray}

Combining \eqref{LDP3}-\eqref{LDP9}, we obtain that
\begin{eqnarray*}
&&\!\!\!\!\!\!\!\!\|X^{\varphi_\epsilon}(t)-Y^{\varphi_\epsilon}(t)\|^2_{F^*_{1,2}}\nonumber\\
\leq&&\!\!\!\!\!\!\!\!\int_0^t(\frac{1}{2\tilde{\alpha}}+2h_{1,\epsilon}(s))\|X^{\varphi_\epsilon}(s)-Y^{\varphi_\epsilon}(s)\|^2_{F^*_{1,2}}ds\nonumber\\
&&+2\epsilon\int_0^t\int_Z\big\langle f(s,X^{\varphi_\epsilon}(s-),z),X^{\varphi_\epsilon}(s-)-Y^{\varphi_\epsilon}(s-)\big\rangle_{F^*_{1,2}}\widetilde{N}^{\epsilon^{-1}\varphi_\epsilon}(dz,ds)\nonumber\\
&&+
\epsilon^2\int_0^t\int_Z\|f(s,X^{\varphi_\epsilon}(s-),z)\|^2_{F^*_{1,2}}N^{\epsilon^{-1}\varphi_\epsilon}(dz,ds).
\end{eqnarray*}
By Gronwall's lemma and \eqref{LDP10},
\begin{eqnarray}\label{eq Zhai 2}
&&\!\!\!\!\!\!\!\!\Bbb{E}\Big[\sup_{s\in[0,T]}\|X^{\varphi_\epsilon}(s)-Y^{\varphi_\epsilon}(s)\|^2_{F^*_{1,2}}\Big]\nonumber\\
\leq&&\!\!\!\!\!\!\!\!e^{\frac{T}{2\tilde{\alpha}}+2C_{l_1,N}}\nonumber\\
&&\cdot
\Big[
\Bbb{E}\Big(\sup_{t\in[0,T]}\Big|2\epsilon\int_0^t\int_Z\big\langle f(s,X^{\varphi_\epsilon}(s-),z),X^{\varphi_\epsilon}(s-)-Y^{\varphi_\epsilon}(s-)\big\rangle_{F^*_{1,2}}\widetilde{N}^{\epsilon^{-1}\varphi_\epsilon}(dz,ds)\Big|\Big)\nonumber\\
&&+
\Bbb{E}\Big(\epsilon^2\int_0^T\int_Z\|f(s,X^{\varphi_\epsilon}(s-),z)\|^2_{F^*_{1,2}}N^{\epsilon^{-1}\varphi_\epsilon}(dz,ds)\Big)
\Big].
\end{eqnarray}

Using the Burkhold-Davis-Gundy (BDG) inequality (with $p=1$, see \cite[Proposition 2.2]{WZ21}), \textbf{(H2)(ii)}, and Young's inequality, we obtain that
\begin{eqnarray}\label{LDP6}
&&\!\!\!\!\!\!\!\!2\epsilon\Bbb{E}\Big[\sup_{t\in[0,T]}\Big|\int_0^t\int_Z\big\langle f(s,X^{\varphi_\epsilon}(s-),z),X^{\varphi_\epsilon}(s-)-Y^{\varphi_\epsilon}(s-)\big\rangle_{F^*_{1,2}}\widetilde{N}^{\epsilon^{-1}\varphi_\epsilon}(dz,ds)\Big|\Big]\nonumber\\
\leq&&\!\!\!\!\!\!\!\!2\epsilon\Bbb{E}\Big[\int_0^T\int_Z\big\langle f(s,X^{\varphi_\epsilon}(s-),z),X^{\varphi_\epsilon}(s-)-Y^{\varphi_\epsilon}(s-)\big\rangle^2_{F^*_{1,2}}N^{\epsilon^{-1}\varphi_\epsilon}(dz,ds)\Big]^{\frac{1}{2}}\nonumber\\
\leq&&\!\!\!\!\!\!\!\!2\epsilon\Bbb{E}\Big[\int_0^T\int_Zl_2^2(s,z)\big(\|X^{\varphi_\epsilon}(s-)\|_{F^*_{1,2}}+1\big)^2\cdot\|X^{\varphi_\epsilon}(s-)-Y^{\varphi_\epsilon}(s-)\|^2_{F^*_{1,2}}N^{\epsilon^{-1}\varphi_\epsilon}(dz,ds)\Big]^{\frac{1}{2}}\nonumber\\
\leq&&\!\!\!\!\!\!\!\!2\epsilon\Bbb{E}\Big[\int_0^T\int_Z2l_2^2(s,z)\big(\|X^{\varphi_\epsilon}(s-)\|^2_{F^*_{1,2}}+1\big)\cdot\|X^{\varphi_\epsilon}(s-)-Y^{\varphi_\epsilon}(s-)\|^2_{F^*_{1,2}}N^{\epsilon^{-1}\varphi_\epsilon}(dz,ds)\Big]^{\frac{1}{2}}\nonumber\\
\leq&&\!\!\!\!\!\!\!\!2\Bbb{E}\Big[\epsilon^{\frac{1}{2}}\sup_{s\in[0,T]}\|X^{\varphi_\epsilon}(s)-Y^{\varphi_\epsilon}(s)\|^2_{F^*_{1,2}}\cdot\epsilon^{\frac{3}{2}}\int_0^T\int_Z2l_2^2(s,z)\big(\|X^{\varphi_\epsilon}(s-)\|^2_{F^*_{1,2}}+1\big)N^{\epsilon^{-1}\varphi_\epsilon}(dz,ds)\Big]^{\frac{1}{2}}\nonumber\\
\leq&&\!\!\!\!\!\!\!\!\epsilon^{\frac{1}{2}}\Bbb{E}\Big[\sup_{s\in[0,T]}\|X^{\varphi_\epsilon}(s)-Y^{\varphi_\epsilon}(s)\|^2_{F^*_{1,2}}\Big]+\epsilon^{\frac{3}{2}}\Bbb{E}\Big[\int_0^T\int_Zl_2^2(s,z)\big(\|X^{\varphi_\epsilon}(s-)\|^2_{F^*_{1,2}}+1\big)N^{\epsilon^{-1}\varphi_\epsilon}(dz,ds)\Big]\nonumber\\
\leq&&\!\!\!\!\!\!\!\!\sqrt{\epsilon}\Bbb{E}\Big[\sup_{s\in[0,T]}\|X^{\varphi_\epsilon}(s)-Y^{\varphi_\epsilon}(s)\|^2_{F^*_{1,2}}\Big]\nonumber\\
&&\!\!\!\!\!\!\!\!+\sqrt{\epsilon}\Bbb{E}\Big[\Big(\sup_{s\in[0,T]}\|X^{\varphi_\epsilon}(s)\|^2_{F^*_{1,2}}+1\Big)\int_0^T\int_Zl_2^2(s,z)\varphi_\epsilon(s,z)\nu(dz)ds\Big],
\end{eqnarray}
From \eqref{eq 5} we know that, there exists a constant $C_{l_2,2,N}$ such that
\begin{eqnarray}\label{LDP7}
C_{l_2,2,N}:=\sup_{g\in S^N}\int_0^T\int_Zl_2^2(s,z)\big(g(s,z)+1\big)\nu(dz)ds<\infty,
\end{eqnarray}
so \eqref{LDP6} is dominated by
\begin{eqnarray}\label{LDP8}
\leq\sqrt{\epsilon}\Bbb{E}\Big[\sup_{s\in[0,T]}\|X^{\varphi_\epsilon}(s)-Y^{\varphi_\epsilon}(s)\|^2_{F^*_{1,2}}\Big]+\sqrt{\epsilon} C_{l_2,2,N}\Bbb{E}\Big[\sup_{s\in[0,T]}\|X^{\varphi_\epsilon}(s)\|^2_{F^*_{1,2}}+1\Big].
\end{eqnarray}

By \textbf{(H2)(ii)} and \eqref{LDP7}, we get
\begin{eqnarray}\label{LDP11}
&&\!\!\!\!\!\!\!\!\Bbb{E}\Big[\epsilon^2\int_0^T\int_Z\|f(s,X^{\varphi_\epsilon}(s-),z)\|^2_{F^*_{1,2}}N^{\epsilon^{-1}\varphi_\epsilon}(dz,ds)\Big]\nonumber\\
=&&\!\!\!\!\!\!\!\!\epsilon\Bbb{E}\Big[\int_0^T\int_Z\|f(s,X^{\varphi_\epsilon}(s),z)\|^2_{F^*_{1,2}}\varphi_\epsilon(s,z)\nu(dz)ds\Big]\nonumber\\
\leq&&\!\!\!\!\!\!\!\!2\epsilon\Bbb{E}\Big[\int_0^T\int_Z\big(\|X^{\varphi_\epsilon}(s)\|^2_{F^*_{1,2}}+1\big)l_2^2(s,z)\varphi_\epsilon(s,z)\nu(dz)ds\Big]\nonumber\\
\leq&&\!\!\!\!\!\!\!\!2\epsilon C_{l_2,2,N}\Bbb{E}\Big[\sup_{s\in[0,T]}\|X^{\varphi_\epsilon}(s)\|^2_{F^*_{1,2}}+1\Big].
\end{eqnarray}

Combining \eqref{eq Zhai 2}-\eqref{LDP11}, we get, for any $\epsilon\in(0,\frac{1}{4}e^{-\frac{T}{\tilde{\alpha}}-4C_{l_1,N}}]$,
\begin{eqnarray}\label{LDP12}
&&\!\!\!\!\!\!\!\!\Bbb{E}\Big[\sup_{s\in[0,T]}\|X^{\varphi_\epsilon}(s)-Y^{\varphi_\epsilon}(s)\|^2_{F^*_{1,2}}\Big]\nonumber\\
\leq&&\!\!\!\!\!\!\!\!6\sqrt{\epsilon} C_{l_2,2,N}\Bbb{E}\Big[\sup_{s\in[0,T]}\|X^{\varphi_\epsilon}(s)\|^2_{F^*_{1,2}}+1\Big]\cdot e^{\frac{T}{2\tilde{\alpha}}+2C_{l_1,N}}\nonumber\\
:=&&\!\!\!\!\!\!\!\!\sqrt{\epsilon }C_{T,l_1,l_2,2,\tilde{\alpha},N}\Bbb{E}\Big[\sup_{s\in[0,T]}\|X^{\varphi_\epsilon}(s)\|^2_{F^*_{1,2}}+1\Big].
\end{eqnarray}
Applying It\^{o}'s formula to $\|X^{\varphi_\epsilon}(s)\|^2_{F^*_{1,2}}$ and using arguments similar to how we got \eqref{LDP12}, we can get
\begin{eqnarray}\label{LDP100}
&&\!\!\!\!\!\!\!\!\Bbb{E}\Big[\sup_{s\in[0,T]}\|X^{\varphi_\epsilon}(s)\|^2_{F^*_{1,2}}\Big]\nonumber\\
\leq&&\!\!\!\!\!\!\!\! \widetilde{C}_{T,l_2,2,\tilde{\alpha},N}|x|_2^2
+\sqrt{\epsilon }\widetilde{C}_{T,l_2,2,\tilde{\alpha},N}\Bbb{E}\Big[\sup_{s\in[0,T]}\|X^{\varphi_\epsilon}(s)\|^2_{F^*_{1,2}}+1\Big].
\end{eqnarray}
To prove (\ref{LDP100}), $\Psi(0)=0$(see \textbf{(H1)}) and \textbf{(H2)(i)} will be used.

Here the constant $\widetilde{C}_{T,l_2,2,\tilde{\alpha},N}$ is independent of $\epsilon$. The inequality above implies that there exists $\epsilon_0>0$ small enough such that
\begin{eqnarray}\label{LDP13}
\sup_{\epsilon\in(0,\epsilon_0)}\Bbb{E}\Big[\sup_{s\in[0,T]}\|X^{\varphi_\epsilon}(s)\|^2_{F^*_{1,2}}\Big]\leq C<\infty.
\end{eqnarray}

Combining \eqref{LDP13} with \eqref{LDP12}, we know that
\begin{eqnarray*}
X^{\varphi_\epsilon}\longrightarrow Y^{\varphi_\epsilon}~~\text{in}~~L^2(\Omega;L^\infty([0,T];F^*_{1,2}))~~\text{as}~~\epsilon\rightarrow0,
\end{eqnarray*}
which implies (b) in Condition \ref{Condition}.

\end{proof}

\subsection*{Acknowledgements}
The authors would like to thank the referees for their valuable suggestions which helped to improve the paper.

\end{document}